\documentclass[11pt,a4paper]{amsart}
\usepackage{tikz, float}
\usepackage{comment}
\usepackage{caption}
\usepackage{subcaption}
\usepackage{amsmath,amsthm,amsfonts,graphicx,amssymb,amscd,dsfont,euscript,enumerate,verbatim,calc,mathtools,listings}
\usepackage{mathrsfs}
\usepackage[hidelinks]{hyperref}
\usepackage[nameinlink, noabbrev]{cleveref}
\newtheorem{thm}{Theorem}[section]

\newtheorem{cor}[thm]{Corollary}
\newtheorem{lem}[thm]{Lemma}

\newtheorem{prop}[thm]{Proposition}

\theoremstyle{definition}
\newtheorem{defn}[thm]{Definition}

\newcommand{\pslnr}{\mathrm{PSL}_n(\mathbb{R})}
\newcommand{\pslnc}{\mathrm{PSL}_n(\mathbb{C})}
\newcommand{\slnc}{\mathrm{SL}_n(\mathbb{C})}

\newcommand{\pslr}{\mathrm{PSL}_2(\mathbb{R})}
\newcommand{\pslc}{\mathrm{PSL}_2(\mathbb{C})}
\newcommand{\slc}{\mathrm{SL}_2(\mathbb{C})}
\newcommand{\rnr}{\mathcal{R}_{\pslnr}(S)}
\newcommand{\rnc}{\mathcal{R}_{\pslnc}(S)}
\newcommand{\Y}{\mathcal{Y}}
\newcommand{\cp}{\mathrm{CP}^1}
\newcommand{\cA}{\mathcal A_n}
\newcommand{\cB}{\mathcal B_n}

\newcommand{\R}{\mathbb R}
\newcommand{\C}{\mathbb C}
\newcommand{\Z}{\mathbb Z}

\newcommand{\PSL}{\mathrm{PSL}}

\newcommand{\wt}{\operatorname{wt}}

\newcommand{\Plaques}{\mathcal P}

\newcommand{\abs}[1]{\left|#1\right|}

\newcommand{\minor}[3]{\Delta_{#1,#2}\!\left(#3\right)}

\newcommand{\HH}{\mathbb{H}}

\newcommand{\cm}[1]{{\tt \textcolor{blue}{#1}} }

\newtheoremstyle{named}%
{}{}{\itshape}{}{\bfseries}{.}{.5em}{\thmnote{#3}}
\theoremstyle{named}

\allowdisplaybreaks[1]
\numberwithin{equation}{section}
\begin{document}
\title{Entropy and domination for  quasi-Hitchin representations}
\author{Pabitra Barman}
\author{Subhojoy Gupta}
\address{Ashoka University, Rajiv Gandhi Education City, Sonipat, 131029, Haryana, India}
\email{pabitrabarman560@gmail.com}
\address{Department of Mathematics, Indian Institute of Science, Bangalore 560012, India}
\email{subhojoy@iisc.ac.in}

\begin{abstract}
Let $S$ be a closed oriented surface of genus $g\geq 2$. We consider an  $n$-pleated representation  $\rho: \pi_1(S) \to \pslnc$ obtained by bending a Hitchin representation $\rho_0:\pi_1(S) \to \pslnr$  along  a maximal geodesic lamination. The space of such $n$-pleated representations was recently introduced by Maloni-Martone-Mazzoli-Zhang who provided a parametrization via shear-bend cocycles. Our first result is that $\rho_0$  dominates $\rho$ in the Hilbert length spectrum and the translation-length spectrum; this generalizes our earlier result for finite laminations on punctured surfaces. Using this, we prove entropy rigidity results: namely, the Hilbert entropy of a quasi-Hitchin representation in the bending fiber is strictly greater than that of $\rho_0$, and the same for the translation-length entropy in the case that $\rho_0$ is $n$-Fuchsian. The proof involves analyzing the weighted planar networks for finite approximants of the monodromy matrix, and establishing a strict domination for \textit{most} curves using the equidistribution of closed geodesics in the unit tangent bundle of $S$.
\end{abstract}

\maketitle

\tableofcontents

\section{Introduction}
Let $S$ be a closed oriented surface of genus $g\geq 2$, and let $\mathcal{R}_{\pslnr}(S)$ denote the representation-variety of $\pslnr$-representations of the surface-group $\pi_1(S)$, up to conjugation. For $n=2$, there is a component comprising discrete, faithful  representations (i.e. Fuchsian representations), which can be identified with the classical Teichm\"{u}ller space $\mathcal{T}(S)$ since they arise as the holonomy of hyperbolic structures on $S$.  For $n>2$, Hitchin  (\cite{Hit}) showed the existence of a component that we denote by $\text{Hit}_n(S)$  in $\rnr$ diffeomorphic to a ball, that Labourie (\cite{Lab}) proved comprises discrete-faithful representations that are \textit{Anosov} in the sense that the dynamics of the geodesic flow on the associated  $\mathbb{R}^n$-bundles on $S$ is Anosov. In \cite{FG}, Fock-Goncharov introduced the notion of \textit{positivity}, and provided an alternative proof of Labourie's result. The study of this Hitchin component and exploring the analogy with Teichm\"{u}ller space is the subject of the active field of \textit{higher} Teichm\"{u}ller theory (see, for example, \cite{WienhardICM}). 

In this article, we consider \textit{complex} deformations of Hitchin representations in the variety $\rnc$ of representations of the surface-group into $\pslnc$, obtained by ``bending" a Hitchin representation along a maximal geodesic lamination $\lambda$, and we first prove a domination result for the length spectra (of closed curves) -- see Theorem \ref{thm:main}. Our preceding paper \cite{BGup} had handled the case when the surface $S$ has punctures, using Fock-Goncharov coordinates on the corresponding representation-variety $\rnc$. When $S$ is a closed surface (as in this paper), analogous coordinates  for $\text{Hit}_n(S)$ were introduced by Bonahon-Dreyer (\cite{BD17}), in which Hitchin representations form a convex cone in an appropriate space of \textit{shear-cocycles} (see \S2 for a brief account). Here, we shall use the recent extension of this by Maloni-Martone-Mazzoli-Zhang (\cite{MMMZ}) that parametrizes a subset  $\mathcal{R}(\lambda) \subset \rnc$ of ``$n$-pleated" representations,  by  ``shear-bend" cocycles (valued in $\mathbb{C}/2\pi i\mathbb{Z}$) transverse to $\lambda$. In what follows, we say that $\rho\in \mathcal{R}(\lambda) $ is obtained by \textit{bending} $\rho_0$ along $\lambda$ if the corresponding difference of shear-bend cocycles is purely imaginary; the set of representations is called the \textit{bending fiber} (see \S2 for details).

\medskip 

\noindent Before we state our results, recall:

\begin{defn}\label{defn:lengths} 
Given a representation $\rho:\pi_1(S)\to \pslnc$ and $\gamma\in\pi_1(S)$, let $A\in \slnc$ be a determinant-one  lift of $\rho(\gamma)$ and let $\lambda_1,\dots,\lambda_n$ be the eigenvalues of $A$ ordered so that $|\lambda_1|\le\cdots\le |\lambda_n|$.
\begin{enumerate}
\item The \emph{Hilbert length} of $\gamma$ with respect to $\rho$ is
\[
\ell^H_{\rho}(\gamma):=\log\Bigg|\frac{\lambda_n}{\lambda_1}\Bigg|.
\]
\item The \emph{translation length} of $\gamma$ with respect to $\rho$ is the translation length of $\rho(\gamma)$ in the symmetric space $\mathbb{X}_n=\pslnc/\mathrm{PSU}(n)$, namely
\[
\ell_{\rho}(\gamma):=\ell_{\mathbb{X}_n}(\rho(\gamma))
=\sqrt{2\big((\log|\lambda_1|)^2+\cdots+(\log|\lambda_n|)^2\big)}.
\]
\end{enumerate}
(These expressions are well-defined, i.e.\ independent of the choice of the determinant-one lift $A$; two such lifts differ by multiplication by an $n$-th root of unity.)
\end{defn}

\noindent \textit{Remark.} In (2) we have chosen a normalization of the Riemannian metric in $\mathbb{X}_n$ such that for $n=2$ the space $\mathbb{X}_2 \cong \HH^3$ and the translation length agrees with the translation distance of $\rho(\gamma)$ in the hyperbolic metric.

\medskip 

\noindent Our first result is:

\begin{thm}[Domination]\label{thm:main}  Let $\rho_0 \in \text{Hit}_n(S)$ and let $\rho\in \mathcal{R}(\lambda)$ be an $n$-pleated representation obtained by bending $\rho_0$ along a maximal geodesic lamination $\lambda$. Then the Hitchin representation $\rho_0$ dominates $\rho$ in the Hilbert length spectrum as well as the translation length spectrum, namely, we have $\ell_\rho(\gamma) \leq \ell_{\rho_0}(\gamma)$ for any  $\gamma \in \pi_1(S)$, where the length function $\ell_\rho(\cdot)$ is either the Hilbert length or the translation length. 
\end{thm}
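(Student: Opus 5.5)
We would follow the strategy of our earlier paper \cite{BGup}: write the monodromy as a product of matrices that are totally positive up to diagonal phase twists, and compare the entries of the bent and unbent products through weighted planar networks. Fix $\gamma\in\pi_1(S)$ and a closed geodesic representative transverse to $\lambda$. Using the shear-bend cocycle parametrization of \cite{MMMZ}, the holonomy $\rho(\gamma)$ is a limit of finite products. These come from choosing finitely many plaques of $\lambda$ crossed by $\gamma$ (a finite approximant of the lamination along the arc). For $\rho_0$ each factor is a product of unipotent "edge" matrices (totally positive, lower or upper triangular with positive entries determined by triangle invariants) and diagonal "shear" matrices $\mathrm{diag}(e^{\sigma_1},\dots)$ with real exponents. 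For $\rho$ the diagonal factors acquire purely imaginary parts, the bending, while the unipotent factors are unchanged, since the triangle invariants coincide in the bending fiber. So the approximant $A_N$ of $\rho(\gamma)$ is obtained from the approximant $A_N^0$ of $\rho_0(\gamma)$ by multiplying diagonal entries by unit complex numbers.

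The key step is that every minor of $A_N$ is bounded in modulus by the corresponding minor of $A_N^0$. Represent $A_N^0$ by a weighted planar network (Lindström--Gessel--Viennot), where every minor is a sum over families of vertex-disjoint paths of products of positive edge weights. In $A_N$ the same network carries the same weights, except that edges for the diagonal factors carry extra unimodular phases. Hence each minor of $A_N$ is a sum of the same positive terms twisted by phases, and the triangle inequality gives $|\Delta_{I,J}(A_N)|\le \Delta_{I,J}(A_N^0)$. Passing to the limit $N\to\infty$ requires the convergence of the finite approximants to the true holonomy in \cite{MMMZ} (after a fixed normalization). We expect this limit step to be the main technical obstacle. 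The infinitely many plaques crossed must be handled by grouping gaps and using the H\"older/decay estimates on the cocycle, and then checking that the normalizing conjugations are the same for $\rho$ and $\rho_0$ up to unimodular diagonal factors. This last point is what allows the minor inequalities to survive the limit.

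From the minor inequalities we deduce the eigenvalue inequalities. Apply them to $\gamma^m$. For the exterior power $\Lambda^k$, the spectral radius equals $\lim_m \|\Lambda^k\rho(\gamma^m)\|^{1/m}$. The entries of $\Lambda^k\rho(\gamma)^m$ are the $k\times k$ minors, and these are bounded by those of the Hitchin matrix. Hence $|\lambda_n\cdots\lambda_{n-k+1}|(\rho)\le \lambda_n\cdots\lambda_{n-k+1}(\rho_0)$ for each $k$, and likewise for the inverse (take $k$ and $n-k$ with determinant one). So the vector of log-moduli $(\log|\lambda_i|)$ for $\rho$ is weakly majorized by the vector of the real Hitchin eigenvalues. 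Taking $k=1$ and $k=n-1$ bounds $\log|\lambda_n|$ and $-\log|\lambda_1|$, which gives the Hilbert length inequality. The sum of squares is Schur-convex, and since both vectors sum to zero, weak majorization becomes majorization. Together these give the translation length inequality.
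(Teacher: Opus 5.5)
Your architecture is the paper's: planar networks, Lindstr\"om's lemma and the triangle inequality on finite approximants, then Gelfand's formula on $\bigwedge^k$ and Karamata/Schur-convexity. The spectral endgame is fine, but the key step rests on a false premise.

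\textbf{Triangle invariants are not fixed.} The bending fiber fixes the real part of the \emph{whole} cocycle $(\alpha_\rho,\theta_\rho)$. So the triangle invariants $t_j(x)=\exp(\theta^j_\rho(x))$ need not be real; only $|t_j(x)|$ agrees with $\rho_0$, and the paper's later strict-domination argument explicitly allows $t_j(x)\notin\R_{>0}$. Hence the unipotent factors change as well, and $A_N$ is \emph{not} obtained from $A_N^0$ by putting phases on diagonal factors. For the triangle inequality to work, the triangle factors must be realized by networks in which every edge weight is a positive Laurent monomial in the coordinates. Then replacing coordinates by their moduli replaces each weight by its modulus. An entry such as $1+t$ of $M(t)$ (for $n=3$) cannot simply be kept as an unchanged ``positive'' weight. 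This is why the paper factors $M(t)=X_{i_1}(t)\cdots X_{i_N}(t)D(t)$ with $X_i=I+m_iE_{i+1,i}$ and $m_i$ a monomial. For the same reason it uses the $S$-conjugated networks of \cite{BGup} for $T^{-1}$, since $M(t)^{-1}$ has negative entries.

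\textbf{The limit step is unresolved.} You flag it as the expected obstacle, but it is where the content of the closed-surface case lies. Multiplying building blocks over finitely many chosen plaques, with the shears between consecutive chosen plaques, does not approximate $\rho(\gamma)$. The exact formula contains slithering factors $\Sigma_\rho(g_i^-,g_{i-1}^+)$ between non-adjacent plaques, and these are themselves infinite products. You need approximants that converge \emph{and} are compatible with taking moduli.

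The paper obtains them from the H\"older-extendable factors $L_\rho(P)$ of \cite{MMMZ}. In the fixed canonical snake basis of a base plaque $x$, each $L_\rho(P)$ is, up to the side-reversal $S$, a diagonal conjugate of the unipotent $M(t_\rho(P))D(t_\rho(P))^{-1}$ by the shears $e_\rho(x,P)$. Hence each is the weight matrix of a transparent network with monomial weights. Its modulus network has weight matrix $D_\varepsilon L_{\rho_0}(P)D_\varepsilon^{-1}$, and the signs cancel in finite products. Convergence of these finite products for both $\rho$ and $\rho_0$ is \cite[Prop.~5.2, Lemma~7.11]{MMMZ}.

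\textbf{Closed leaves are not covered.} You assume a closed geodesic representative transverse to $\lambda$, which does not exist when $\gamma$ is a power of a closed leaf of $\lambda$. The paper treats that case separately. By \cite{MMMZ}, the eigenvalue moduli there depend only on $\operatorname{Re}\alpha_\rho$, which gives equality of both lengths.
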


The strategy of the proof generalizes that in the preceding article \cite{BGup}; in particular, the dominating Hitchin representation $\rho_0$ is the one obtained by setting the imaginary part of the shear-bend cocycle to zero. In case of a punctured-surface, we had taken the  modulus of each Fock-Goncharov parameter; moreover, the ``maximal lamination" we had considered there was an ideal triangulation, with finitely many leaves. Here, in the case of a maximal lamination on a closed surface, we are forced to consider transverse arcs crossing infinitely many (possibly uncountable)  leaves; we rely on the methods of \cite{MMMZ} that generalize work of Bonahon (for $\pslc$-representations) to prove the corresponding convergence issues.  In particular, we develop a method to replace the finite product of building blocks used in \cite{BGup} by finite approximants to the ``slithering map" of \cite{MMMZ}. We then show that these approximants are weight matrices of weighted planar networks, and are compatible with taking the real part of the cocycle; the domination argument in \cite{BGup} then applies and passes to the limit. 
\medskip

As a consequence of the domination result, the main theorem that we prove is an entropy rigidity result for \textit{quasi-Hitchin} representations, namely those obtained by bending deformations of Hitchin representations that are still Borel-Anosov.

\medskip

\noindent Recall:

\begin{defn}[Entropy]\label{defn:ent} Given a discrete representation $\rho:\pi_1(S) \to \pslnc$, we define its Hilbert entropy to be:
\begin{equation}
H(\rho) =  \limsup\limits_{L\to \infty} \displaystyle\frac{\log \#\{[\gamma] \text{ a conjugacy class in } \pi_1(S)\ \vert\ l^H_\rho(\gamma) \leq L\}}{L}
\end{equation}
where $l^H_\rho(\gamma)$ is the Hilbert length of $\gamma$ with respect to $\rho$.  The \textit{translation length entropy} $E(\rho)$ is defined by the same expression, replacing $l^H_\rho(\gamma)$  by   the \textit{translation length} $\ell_\rho(\gamma)$ of $\rho(\gamma)$. 
\end{defn}

\noindent We shall prove:

\begin{thm}[Entropy rigidity for bending]\label{thm:ent} The Hilbert entropy strictly increases in a bending fiber of any Hitchin representation $\rho_0$. That is, if $\rho:\pi_1(S) \to \pslnc$ is a quasi-Hitchin representation in the bending fiber of $\rho_0$, then $H(\rho) \geq H(\rho_0)$ and equality holds only if $\rho=\rho_0$. Moreover, the same is true for the translation length entropy in a bending fiber of an $n$-Fuchsian representation $\rho_0$.
\end{thm}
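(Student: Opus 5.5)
The weak inequality follows directly from Theorem \ref{thm:main}. Since $\ell^H_\rho(\gamma)\le \ell^H_{\rho_0}(\gamma)$ for every $\gamma$, we have
\[
\{[\gamma] : \ell^H_{\rho_0}(\gamma)\le L\}\subseteq\{[\gamma] : \ell^H_{\rho}(\gamma)\le L\}
\]
for every $L$, and hence $H(\rho)\ge H(\rho_0)$. The same argument gives $E(\rho)\ge E(\rho_0)$.

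For the strict inequality, suppose $\rho\neq\rho_0$ and $H(\rho)=H(\rho_0)$. I would use the thermodynamic formalism for Anosov representations. Since $\rho$ is quasi-Hitchin, it is Borel--Anosov, and so is $\rho_0$. Their Hilbert length functions are therefore realized as periods of Hölder reparametrization functions $f_\rho, f_{\rho_0}$ of the geodesic flow on $T^1S$, following Bridgeman--Canary--Labourie--Sambarino and Sambarino. The key input is the standard rigidity statement. Let $\mu$ be the measure of maximal entropy (Bowen--Margulis measure) for the reparametrized flow of $\rho_0$. Then
\[
H(\rho)\,\frac{\int f_\rho\, d\mu}{\int f_{\rho_0}\, d\mu}\ \ge\ H(\rho_0)\ \ \text{or the analogous intersection inequality}\ \ I(\rho_0,\rho)\ge H(\rho_0)/H(\rho),
\]
with equality if and only if $H(\rho)f_\rho$ and $H(\rho_0)f_{\rho_0}$ are cohomologous, that is, iff $H(\rho)\ell^H_\rho=H(\rho_0)\ell^H_{\rho_0}$ on all closed curves. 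Here the intersection is $I(\rho_0,\rho)=\lim \frac{1}{\#}\sum \ell_\rho(\gamma)/\ell_{\rho_0}(\gamma)$, taken over $\rho_0$-equidistributed families of closed geodesics. Domination gives $I(\rho_0,\rho)\le 1$. If $H(\rho)=H(\rho_0)$, the inequality forces $I=1$, and equality then gives $\ell^H_\rho=\ell^H_{\rho_0}$ on every closed curve.

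So the heart of the proof is to show that $\ell^H_\rho(\gamma)<\ell^H_{\rho_0}(\gamma)$ strictly for some $\gamma$, and in fact uniformly on a positive-proportion set. I would show there is $\epsilon>0$ such that $\ell^H_\rho(\gamma)\le(1-\epsilon)\ell^H_{\rho_0}(\gamma)$ for most long curves. Then equidistribution of closed geodesics in $T^1S$ for the equilibrium measure gives $I(\rho_0,\rho)<1$ directly, and hence $H(\rho)>H(\rho_0)$.

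To obtain this strict inequality, I would go back into the weighted planar network proof of Theorem \ref{thm:main}. Since $\rho\ne\rho_0$, the bending cocycle is nonzero, so there is a transverse arc $k$ to $\lambda$ on which the imaginary part is not a multiple of $2\pi$ on some gap component. In the finite approximants of the slithering map across $k$, the complex weights then have genuinely differing arguments. As a result, $|\text{sum of path weights}|<\text{sum of moduli}$ by a definite factor $1-\delta$ for the relevant (top/bottom) minors. Any closed geodesic that crosses $k$ in a fixed configuration (a definite short segment of $T^1S$ near $k$) picks up this factor once per crossing. By equidistribution, a typical closed geodesic of $\rho_0$-length $L$ makes at least $cL$ such crossings. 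This yields the multiplicative loss $(1-\delta)^{cL}$ in the leading eigenvalue modulus, and so a linear deficit in $\ell^H$.

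The main obstacle is making this per-crossing loss uniform and multiplicative. One has to control the eigenvalue modulus of the product through the planar network positivity, for instance via the Hilbert length as $\log$ of the ratio of top singular data of an appropriate product of positive matrices. One also has to handle the fact that the bending occurs along uncountably many leaves, which requires passing to limits of the approximants while keeping $\delta$ bounded away from zero.

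For translation length in the $n$-Fuchsian case, the same scheme applies. Here $\ell_{\rho_0}$ is a multiple of the hyperbolic length, so all eigenvalue moduli are controlled simultaneously. The per-crossing strict loss in the top eigenvalue then yields a strict loss in the $\ell^2$ norm of the log-moduli, because the domination from Theorem \ref{thm:main} holds for each partial sum $\sum_{i\ge k}\log|\lambda_i|$ (majorization). Equality in the intersection rigidity gives proportional spectra, which contradicts the linear deficit.
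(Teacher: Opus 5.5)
Your weak inequality is correct. For the Hilbert entropy, your appeal to the equality case of the renormalized intersection ($J(\rho_0,\rho)=1$ iff the entropy-normalized Hilbert spectra agree) is a genuinely different and legitimate route, granting the H\"older reparametrization for the complex Borel--Anosov $\rho$. Combined with domination, it reduces strictness to a \emph{single} curve with $\ell^H_\rho(\gamma)<\ell^H_{\rho_0}(\gamma)$, i.e.\ to Corollary~\ref{cor:closed-ineq}. That would bypass the compounding estimates and the full-support equidistribution of the appendix.

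The gap is that you never prove that strict inequality, either for one curve or uniformly per crossing. You call its survival under $N\to\infty$ ``the main obstacle'' and stop there. Moreover, the slithering approximants are where the loss is hardest to control:
\begin{itemize}
\item the factors $L_\rho(P)$ are $O(d_\infty(x_{P,1},x_{P,3})^\nu)$-close to the identity;
\item any path using a slanted edge of a small plaque has small weight;
\item the competing families you would compare must cross a growing number of such factors, so their weights change with $N$.
\end{itemize}
So nothing keeps $|u|+|v|-|u+v|$ bounded below in the limit.

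The paper's key idea is the reverse. It places the coordinate $c\notin\mathbb{R}_{>0}$ in one of the finitely many explicit $T^{\pm1}E$ blocks of a coarse itinerary realized by a closed geodesic (Lemma~\ref{lem:closed-itinerary} and the pattern~\eqref{eq:strict-pattern}). It then uses that every slithering approximant is \emph{transparent} (Lemma~\ref{lem:Lnetw}, Corollary~\ref{cor:slithering-networks}), so paths cross it on vertex-disjoint weight-one channels. Lemmas~\ref{lem:universal-routing}, \ref{lem:coordinate-detecting} and \ref{lem:neutral-insertion} then give, for every $I,J$, two $I$-to-$J$ families with $N$-independent weights of ratio $c$. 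Lemma~\ref{prop:strict-limit} then yields a factor $q<1$ uniform in $N$ (Proposition~\ref{prop:closed-strict}).

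Two further points. First, per-crossing compounding does not follow from strictness in a few ``relevant'' minors. The entrywise product argument (Lemma~\ref{lem:compound}) needs $|C|\le q\,C_0$ in \emph{every} entry of the repeated block, and in every $(n-1)\times(n-1)$ minor for the smallest eigenvalue. This is why the paper flanks the detecting block by routing blocks $(T^{-1}E)(TE)$.

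Second, for translation length the intersection-rigidity step is unavailable. $\ell_\rho(\gamma)=\|\lambda(\rho(\gamma))\|$ is not a linear functional of the Jordan projection, so it is not the period function of a H\"older reparametrization (cf.\ the remark closing the appendix). You should conclude instead from statistical domination via the elementary counting of Lemma~\ref{lem:stat}. Your remark that a top-eigenvalue loss suffices in the $n$-Fuchsian case is correct. In the notation of the proof of Proposition~\ref{prop:multgain2}, summation by parts gives $\sum_i(y_i^2-x_i^2)\ge s_1(c_1-c_2)\ge m\delta\,(y_1-y_2)$ with $y_1-y_2=\ell_X(\gamma)$. But this still rests on the uniform block estimate above.
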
 

\noindent \textit{Remarks.} (i) We expect that the translation length entropy strictly increases in the bending fiber of \textit{any} Hitchin representation; see the final remarks of \S4.5 and the Appendix. 

(ii) For an $n$-Fuchsian representation $\rho_0$, the Hilbert entropy is $H(\rho_0)=\frac{1}{n-1}$ (see \cite{PotrieSambarinoEigenvaluesEntropy}) and the translation length entropy $E(\rho_0)=\sqrt{6/(n^3-n)}$ (see \cite{DaiLi2}).

\medskip
Within the Hitchin component $\text{Hit}_n(S)$, the entropy-rigidity result of Potrie-Sambarino \cite{PotrieSambarinoEigenvaluesEntropy} shows that both these entropies are bounded and the maximum is attained precisely on the $n$-Fuchsian locus (see \cite[Remark 1.5]{DaiLi2}). Indeed, work of Zhang in \cite{Zhang} shows that the Hilbert entropy tends to zero along certain sequences in $\text{Hit}_n(S)$.  For $\rho \in \text{Hit}_3(S)$, Pozzetti-Sambarino (\cite{PS26}) gave an interpretation of the Hilbert entropy $H(\rho)$  in terms of the Hausdorff dimension of non-differentiable points of the limit map. 

\medskip 
Throughout, the domination in the length spectrum, as in Theorem \ref{thm:main} is said to be \textit{strict} if there is a constant $0<\alpha <1$ such that $\ell_\rho(\gamma) \leq \alpha \ell_{\rho_0}(\gamma)$ for all $\gamma \in \pi_1(S)$.
The proof of Theorem \ref{thm:ent} is based on the observation that a strict inequality of entropy follows from a strict domination for \textit{most} curves (see the notion of \textit{statistical domination} in Lemma \ref{lem:stat}). The main technical part of the paper is to show that strict domination holds for curves following a certain itinerary on the surface; this uses a finer analysis of the associated weighted planar networks, including their ``transparency" (see Definition \ref{def:transparent}). In the case $\rho_0$ is an $n$-Fuchsian representation, the statistical domination then follows from the well-known equidistribution of closed geodesics with respect to the usual Liouville measure on the unit tangent bundle of a hyperbolic surface (see \cite{Bowen-eq}). In the case that $\rho_0$ is a Hitchin representation, we use an analogous equidistribution result (from \cite{Sam14}) with respect to the corresponding Bowen-Margulis measure, which also has full support in the unit tangent bundle  (see Appendix A). 

\medskip
For quasi-Fuchsian representations into $\pslc$, the Hilbert entropy and translation length entropy coincide, and also equals the Hausdorff dimension of the corresponding limit set (see \cite{Sullivan84}, \cite{BishopJones97}). In this case, the statement of Theorem \ref{thm:ent} is Bowen's result in (\cite{Bowen79}) that used thermodynamic formalism to prove that the Hausdorff dimension is minimized (with value $1$) exactly at the Fuchsian locus. We provide an alternative proof in \S4.2 using  the notion of statistical domination, which in this case is proved by a geometric argument using pleated-planes in $\HH^3$. In higher rank, there is no comparable notion of pleated planes in the corresponding symmetric space, so we develop and use algebraic and combinatorial techniques alluded to earlier.   

\medskip
For the case $n>2$, a recent result of Farre-Pozzetti-Viaggi (\cite{FarrePozzettiViaggi24}) establishes the analogue of Bowen's result for \textit{hyperconvex} representations, which is a strict subset of the quasi-Hitchin representations considered here. For such representations, the Hausdorff dimension can be expressed in terms of the entropies for the $(n-1)$ length functions, one  for each $1\leq i< n$,  corresponding to the log-ratios $\log\lvert \lambda_{i+1}/\lambda_{i}\rvert$ of successive eigenvalues (see Definition \ref{defn:lengths}).  It would be interesting to extend our results to show that statistical domination holds for these length functions; this is supported by some numerical experiments, and would provide an alternative linear algebraic approach to their result.

\medskip

Finally, it would be interesting to relate the results of this paper to the alternative perspective on these representation-varieties arising from the identification with moduli spaces of Higgs bundles via the ``non-abelian Hodge correspondence". In particular, the  ``bending fiber" in Theorem \ref{thm:ent} can be thought of as an analogue of a regular fiber of the Hitchin map, and the corresponding domination and entropy rigidity results there are still conjectural, known only for the fibers of $n$-Fuchsian representations (see \cite{DaiLi2}).

\medskip
\noindent\textbf{Acknowledgements.}  This project originated from the PhD thesis work of PB, written under SG's supervision. The authors would like to thank  Beatrice Pozzetti and Krishnendu Gongopadhyay for comments and suggestions on that thesis. SG is grateful to Tengren Zhang for helpful conversations, Giuseppe Martone for insightful suggestions, Andr\'{e}s Sambarino for answering some queries,  Apoorva Khare for providing some references, and Fran\c{c}ois Labourie for his encouraging comments. PB sincerely thanks Pranab Sardar for valuable support during his post doctoral stay at IISER Mohali. The ideas and drafts of this work preceded the advent of AI tools; however ChatGPT 5.6 Sol assisted in the final preparation of the manuscript, suggesting ways to polish the arguments and streamline references to prior work, particularly in \S3.2, \S3.3 and the Appendix. The authors take full responsibility for the content of this article. This work was supported by the Department of Science and Technology, Govt.of India grant no. CRG/2022/001822, and by the DST FIST program - 2021 [TPN - 700661].

\section{Preliminaries}

\subsection{Shear-bend coordinates for $\pslc$-representations}

It is well-known that the Teichm\"{u}ller space $\mathcal{T}(S)$, comprising Fuchsian representations in $\pslr$, is diffeomorphic to a ball of dimension $6g-6$, and one of the parametrizations is via \textit{shear coordinates} with respect to a \textit{maximal lamination} on the  surface. 

Recall that a geodesic lamination on $S$, equipped with a choice of a hyperbolic metric,  is a closed set that is a union of disjoint simple complete geodesics (either closed or bi-infinite), and it is maximal if its complement comprises ideal triangles (necessarily $4g-4$ in number).  In the case that the maximal lamination $\lambda$  has finitely many leaves, the shear coordinates for a hyperbolic surface (an element of $\mathcal{T}(S)$) are simple to describe: realizing leaves of $\lambda$ as geodesics, the two ideal hyperbolic triangles adjacent to a bi-infinite leaf differ by a real-valued \textit{shear}, measured by the distance between the points on the leaf where the altitudes from the two centroids intersect.  (See \cite{BBFS} for an account.)

More generally, a maximal lamination $\lambda$ could have uncountably many leaves, such that a transverse arc intersects $\lambda$ in a Cantor set;  in this case one can record the mutual shears as a real-valued transverse cocycle (see \cite{Bonahon-Topology}), and $\mathcal{T}(S)$ is diffeomorphic to a finite-sided cone $\mathcal{C}(\lambda)$ in the vector space of such cocycles $\mathcal{H}(\lambda; \mathbb{R})$ (see Theorem A in \cite{BonShear}).

Bonahon extended this to parametrizing subsets of $\pslc$-representations obtained by \textit{bending} a Fuchsian representation along leaves of $\lambda$; in the universal cover $\HH^2$ of a hyperbolic surface, this bending can be described in geometric terms, by equivariantly bending a totally-geodesic copy of $\HH^2$ in $\HH^3$ along the leaves of the lift $\tilde{\lambda}$. The resulting surface is a piecewise totally-geodesic  \textit{pleated plane} in $\HH^3$, equivariant under a new surface-group representation into $\pslc$; such surfaces were introduced by Thurston in the context of \textit{discrete} $\pslc$-representations (i.e. Kleinian groups)  and are an important tool in the study of ends of hyperbolic $3$-manifolds.

In particular, Bonahon  introduced \textit{shear-bend} coordinates with respect to $\lambda$, for such pleated surfaces (and their corresponding $\pslc$-representations),  measured by \textit{complex}-valued cocycles in the vector space $\mathcal{H}(\lambda; \mathbb{C}/2\pi i \mathbb{Z})$ of $ \mathbb{C}/2\pi i \mathbb{Z}$-valued cocycles. (Note that a $2\pi$-bend does not affect the representation, which explains quotient by $2\pi i \mathbb{Z}$.) He proved:

\begin{thm}[Bonahon, \cite{BonShear}]\label{thm:bon} The subset of $\pslc$-representations pleated along $\lambda$ is biholomorphic to $$\mathcal{C}(\lambda) + i \mathcal{H}(\lambda; \mathbb{R}/2\pi \mathbb{Z}) \subset \mathcal{H}(\lambda; \mathbb{C}/2\pi i \mathbb{Z})$$ where $\mathcal{C}(\lambda)$ is the cone parametrizing Teichm\"{u}ller space $\mathcal{T}(S)$. 
\end{thm}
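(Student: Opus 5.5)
We would prove Theorem \ref{thm:bon} in three steps: construct a map from cocycles to representations, show that it is injective and holomorphic, and identify its image. Fix the maximal lamination $\lambda$ and a lift $\tilde\lambda \subset \HH^2$. For a cocycle $\sigma \in \mathcal{H}(\lambda;\mathbb{C}/2\pi i\mathbb{Z})$ we build a \emph{shear-bend map} $\Sigma_{PQ}\in\pslc$ for any two plaques $P,Q$ of $\tilde\lambda$. When $P$ and $Q$ are separated by finitely many leaves, $\Sigma_{PQ}$ is a product of elementary matrices. Each elementary matrix is a complex translation $\exp(\sigma(P_j,P_{j+1}) H_{g_j})$ along the common leaf $g_j$, where $H_g$ generates the loxodromic flow with axis $g$. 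In general we take $\Sigma_{PQ}$ as a limit over finite families of plaques between $P$ and $Q$, ordered along a transverse arc $k$.

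The key estimate, and the main obstacle, is convergence of these finite products when $k$ meets $\lambda$ in a Cantor set. We would use the standard geometric input for a maximal lamination on a closed hyperbolic surface. Gaps of $k\setminus\lambda$ are indexed by divergence radius $r$, with at most $O(r)$ gaps of radius $r$. Two leaves bounding a gap of radius $r$ are $O(e^{-r})$-close along $k$, so the corresponding elementary matrices differ from their reordered or regrouped versions by $O(e^{-r})$. The cocycle values satisfy $|\sigma(P_j,P_{j+1})|=O(r)$ by the Hölder-type bound for transverse cocycles (Bonahon). Hence the error from inserting a new plaque is $O(r e^{-r})$, and it is summable over $r$. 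This gives a Cauchy sequence of finite products and a well-defined limit.

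Next we check the required properties. Additivity $\Sigma_{PQ}\Sigma_{QR}=\Sigma_{PR}$ follows from the construction. Equivariance under $\pi_1(S)$ follows from the $\pi_1(S)$-invariance of $\sigma$. Fixing a base plaque $P_0$ and setting $\rho_\sigma(\gamma)=\Sigma_{P_0,\gamma P_0}\,\gamma_0$ defines a representation, where $\gamma_0$ is the reference isometry determined by the triangle geometry. The dependence on $\sigma$ is holomorphic, since each finite product is holomorphic and the convergence is locally uniform. The map is injective because $\sigma$ is recovered from $\rho_\sigma$ through the cross-ratios of the limit points of the four vertices of adjacent ideal triangles. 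For the real part, Bonahon's Theorem A identifies which real cocycles give hyperbolic metrics, namely exactly those in $\mathcal{C}(\lambda)$.

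It remains to identify the image. Consider a representation pleated along $\lambda$, meaning one with an equivariant pleated plane realizing $\tilde\lambda$. Its shear-bend cocycle is obtained from the pleated plane. The real part of that cocycle is the shear cocycle of the intrinsic hyperbolic metric of the pleated surface, so it lies in $\mathcal{C}(\lambda)$. The imaginary part is an arbitrary bending cocycle mod $2\pi$. Conversely, for $\sigma=\tau+i\beta$ with $\tau\in\mathcal{C}(\lambda)$, the construction above bends the hyperbolic plane with shears $\tau$ by the angles $\beta$, and this produces an equivariant pleated plane. The two constructions are mutually inverse, which proves the biholomorphism onto $\mathcal{C}(\lambda)+i\mathcal{H}(\lambda;\mathbb{R}/2\pi\mathbb{Z})$. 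The real-part condition on $\mathcal{C}(\lambda)$ is the delicate input here, and we would quote it directly from \cite{BonShear} rather than reprove it.
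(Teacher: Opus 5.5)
The paper gives no proof of Theorem~\ref{thm:bon}; it is quoted from \cite{BonShear}. Your sketch can therefore only be measured against Bonahon's own argument, and you do reproduce its overall shape: products of complex translations controlled by divergence radii, Theorem~A for the real part, and bending for surjectivity. As written, though, two steps fail for a general maximal lamination on a closed surface.

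\textbf{Plaques need not be adjacent.} If $\lambda$ is minimal, no leaf is isolated. Every side of a plaque is then accumulated on from outside, so no two plaques share a side. There are no ``adjacent ideal triangles'' whose cross-ratios could recover $\sigma$. For the same reason, ``translation along the common leaf $g_j$'' is undefined for consecutive members of a finite family. Injectivity needs two things:
\begin{itemize}
\item a definition of the shear-bend $\sigma_\rho(P,Q)$ of a pleated plane between \emph{arbitrary} plaques, by a limiting procedure (Bonahon's, or the slithering-map definition recalled in \S3.2, cf.\ Definition~\ref{defn:slith});
\item a proof that this definition inverts your construction.
\end{itemize}
Saying the two constructions are ``mutually inverse'' states the claim; it does not prove it.

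\textbf{The convergence estimate is wrong as stated.} The geodesics $g_j$ must be placed somewhere. Bonahon fixes a reference metric $m$, puts $\widetilde\lambda$ in $\widetilde X_m\subset\HH^3$, and works with the relative cocycle $\beta=\sigma-\sigma_m$; applying $\sigma$ itself to leaves already positioned by $m$ would produce $\sigma_m+\sigma$. He uses one factor per intermediate plaque,
\[
T_{g_j^-}^{\beta(P,P_j)}\,T_{g_j^+}^{-\beta(P,P_j)},
\]
where $T_g^z$ is the complex translation of length $z$ along $g$, $g_j^\pm$ are the two sides of $P_j$ crossed by $k$, and the exponent is cumulative from $P$ rather than an increment. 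The sides $g_j^\pm$ share the ideal vertex of the spike crossed by $k$ and are $O(e^{-r_j})$-close near $k$. Hence for purely imaginary $\beta$ this factor is $I+O(e^{-r_j})$ uniformly in the angle, and the product converges independently of the exhaustion.

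A real part, however, is amplified by conjugation. The factor can differ from $I$ by as much as $e^{|\operatorname{Re}\beta(P,P_j)|}e^{-r_j}$, and $|\operatorname{Re}\beta(P,P_j)|$ may grow linearly in $r_j$. So your bound $O(re^{-r})$ fails for the shear part. This is exactly why the real part is absorbed into the reference metric via Theorem~A and only bending is applied globally.

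The same issue affects holomorphicity. In your last paragraph $m$ is chosen with $\sigma_m=\operatorname{Re}\sigma$, which does not depend holomorphically on $\sigma$, so that construction does not give holomorphic dependence. A separate local argument is needed: fix $m$, let the complex $\beta$ vary with $\operatorname{Re}\beta$ small, and show that the resulting plane has shear-bend cocycle $\sigma_m+\beta$.
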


\subsection{Bonahon-Dreyer parametrization}  
In \cite{BonDrey}, Bonahon-Dreyer gave a parametrization of the Hitchin component $\text{Hit}_n(S)$ that generalized the shear parameters of Teichm\"{u}ller space; before we state their result, we note two earlier developments alluded to in the Introduction:

First, after the work of Labourie (\cite{Lab}), one salient feature of a Hitchin representation to $\pslnr$ has been the associated \textit{limit map} $\xi: \partial_\infty \tilde{S} \to \mathcal{F}(\mathbb{R}^n)$ from the ideal boundary circle of the universal cover to the space of complete flags in $\mathcal{F}(\mathbb{R}^n)$. 

Second, in \cite{FG}, Fock-Goncharov proved that this limit map $\xi$ has the property that for any cyclically ordered triple of points $x,y,z \in \partial_\infty \tilde{S}$, the images $\xi(x), \xi(y)$ and $\xi(z)$ are in general position, and in fact determine a \textit{positive} triple of flags in $\mathcal{F}(\mathbb{R}^n)$, a notion they introduced inspired by Lusztig's notion of total positivity.   Indeed, for any such triple of flags, they defined a collection of $\frac{1}{2}(n-1)(n-2)$ \textit{positive} real numbers that characterized the triple up to the action of $\pslnr$. 

Thus, given a maximal lamination $\lambda$ on $S$, the ideal vertices of the lift of a complementary ideal triangle to the universal cover determine a positive triple of flags via the limit map $\xi$.  Bonahon-Dreyer's parameters then correspond to the $\frac{1}{2}(n-1)(n-2)$  ``triangle-invariants" of such a triple for each complementary ideal triangle, together with a finite-sided cone in a vector space $\mathcal{H}(\lambda; \mathbb{R}^{n-1})$ of $\mathbb{R}^{n-1}$-valued shearing cocycles. We record their result as:

\begin{thm}[Bonahon-Dreyer, Theorem 0.4 of \cite{BonDrey}]\label{thm:bondrey}
For $\lambda$ a maximal lamination, $\emph{Hit}_n(S)$ is homeomorphic to a finite-sided cone $\mathcal{C}(\lambda,n) \subset \mathbb{R}^{T} \times \mathcal{H}(\lambda; \mathbb{R}^{n-1})$, where $T$ is the total number of triangle invariants.  
\end{thm}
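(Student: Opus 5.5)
Since this is Bonahon--Dreyer's theorem, I would reconstruct their argument in four steps: define the map, prove it is injective by reconstruction, identify its image, and finish with a dimension count and invariance of domain. \emph{Defining the map.} Fix $\rho\in\text{Hit}_n(S)$ with Labourie limit map $\xi$.
\begin{itemize}
\item For each of the $4g-4$ complementary triangles of $\lambda$, choose a lift with cyclically ordered vertices $x,y,z$. The positive triple $(\xi(x),\xi(y),\xi(z))$ has $\frac12(n-1)(n-2)$ Fock--Goncharov triple ratios. Their logarithms give the coordinates in $\mathbb{R}^T$. Cyclic rotation of the vertices permutes these ratios in a fixed way, so this is well defined after a consistent choice of labeling.
\item For two plaques $P,Q$ of $\tilde\lambda$ separated by a transverse arc $k$, I would define an $\mathbb{R}^{n-1}$-valued shear. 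Its $a$-th component is the log of a double ratio of the flags at the endpoints of a leaf $g$ of $\tilde\lambda$ crossing $k$, together with the third vertices of $P$ and $Q$.
\item When $k$ crosses infinitely many leaves, the shear is instead defined through the slithering map $\Sigma_{PQ}\in\pslnr$. This is the limit of finite products of elementary unipotent-type maps attached to the gaps crossed by $k$.
\end{itemize}

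\emph{Convergence.} The key analytic input is that $\Sigma_{PQ}$ exists. The gaps of $\tilde\lambda$ along $k$ have exponentially decaying size, and the limit map is H\"older by the Anosov property. Together these make the infinite product converge, and the resulting shears define a finitely additive transverse cocycle for $\lambda$.

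\emph{Injectivity.} I would reconstruct $\xi$, and hence $\rho$ up to conjugation, from the data, as follows.
\begin{itemize}
\item The triangle invariants fix the flag triple at one plaque up to $\pslnr$.
\item Slithering, with the shear cocycle prescribing the eigenvalue data of each elementary factor, transports these flags to every other plaque.
\item Density of the ideal vertices of plaques in $\partial_\infty\tilde S$, together with continuity of $\xi$, recovers $\xi$ and then $\rho$.
\end{itemize}

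\emph{Identifying the image.} Two kinds of conditions are needed.
\begin{itemize}
\item \emph{Linear equalities.} For a closed leaf or an oriented spiralling end of $\lambda$, the sum of shears on the two sides plus a linear combination of the triangle invariants of adjacent triangles must vanish (with respect to the rotation class). These relations come from the eigenflag being attracting on both sides.
\item \emph{Inequalities.} Let $\tau_a$ be the Thurston symplectic form on $\mathcal H(\lambda;\mathbb R)$, and let $\sigma_a$ be the $a$-th component of the cocycle modified by the triangle-invariant correction term. Then for every transverse measure $\mu$ on $\lambda$, the pairing $\tau_a(\sigma_a,\mu)$ equals the $a$-th length $\log|\lambda_{a+1}/\lambda_a|$ of $\mu$, which is positive. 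Since the cone of transverse measures is finitely generated by ergodic measures, this gives finitely many strict inequalities. These equalities and inequalities cut out $\mathcal C(\lambda,n)$.
\end{itemize}

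\emph{Surjectivity and the main obstacle.} Conversely, any data in $\mathcal C(\lambda,n)$ must produce a representation. I would build $\xi$ by slithering from the given data, and use the positivity inequalities to show that the resulting infinite products converge and that the flags obtained are positive. This gives a positive equivariant continuous curve, and Fock--Goncharov/Labourie then says the representation is Hitchin. This is the step I expect to be hardest: the convergence must be controlled uniformly using only the inequalities $\tau_a(\sigma_a,\mu)>0$ rather than a priori Anosov estimates. Finally, I would check that the cone has dimension $(n^2-1)(2g-2)$, matching $\dim\text{Hit}_n(S)$, and that the map is continuous and injective. Invariance of domain together with properness then gives a homeomorphism onto $\mathcal C(\lambda,n)$.
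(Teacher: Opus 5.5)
The paper does not prove this statement. It is quoted from Bonahon--Dreyer as background, so the only available comparison is with their argument. Your overall architecture matches theirs: slithering via H\"older estimates, reconstruction for injectivity, Thurston pairings with transverse measures for the inequalities, and a positive flag curve for surjectivity. However, your description of the target space and of the cone is wrong for $n\geq 3$. The shears are \emph{not} a finitely additive transverse cocycle. If an arc from $P$ to $Q$ is split inside an intermediate plaque $R$, then $\sigma_a(P,Q)$ in general differs from $\sigma_a(P,R)+\sigma_a(R,Q)$ by a sum of triangle invariants of $R$. Reversing the arc also exchanges the components $a$ and $n-a$. This matches $\ell_a(\gamma^{-1})=\ell_{n-a}(\gamma)$, so ``the $a$-th length'' of an unoriented transverse measure is not defined without passing to the orientation cover. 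This is why Bonahon--Dreyer work with twisted relative cocycles, and Maloni--Martone--Mazzoli--Zhang with the space $\mathcal{Y}(\lambda,n;\mathbb{R})$ of Theorem~\ref{bon-gen}; the $\mathcal{H}(\lambda;\mathbb{R}^{n-1})$ in the statement is shorthand. Your own dimension check exposes the problem: $\dim\bigl(\mathbb{R}^T\times\mathcal{H}(\lambda;\mathbb{R}^{n-1})\bigr)=(2g-2)(n-1)(n-2)+(n-1)(6g-6)=(n^2-1)(2g-2)=\dim\text{Hit}_n(S)$. So if the shears really formed an honest cocycle, invariance of domain would make the image open. Then no nontrivial linear equality between shears and triangle invariants, such as the ones you list, could hold on it. Those equalities are also indexed by closed leaves and spiralling ends, which exist only for finitely-leaved $\lambda$; a general maximal lamination can have no closed leaf at all. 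Such relations have to be part of the definition of the cocycle space, and that is exactly where your additivity claim breaks.

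The second gap is surjectivity, which you correctly flag as the hard step but do not argue. Saying you would ``use the positivity inequalities to show that the resulting infinite products converge'' restates what has to be proved. What is missing is a mechanism that turns finitely many inequalities (one per extremal transverse measure) into a \emph{uniform} exponential decay of the unipotent factors attached to all gaps crossed by an arc. One way would be a compactness argument: long train-track paths, once normalized, accumulate on transverse measures, so the corrected shear sums along them should grow at least linearly in the divergence radius. You need the same estimate to extend the flag map continuously from plaque vertices to $\partial_\infty\widetilde S$, and to get positivity before invoking Fock--Goncharov/Labourie. If you instead rely on invariance of domain plus properness, then closedness of the image in the cone is exactly the unproved content; with a direct construction, properness is superfluous.

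A smaller point concerns injectivity. The elementary slithering factors are unipotent, so the shears enter through a diagonal conjugation in a fixed snake basis, as for $L_\rho(P)$ in Lemma~\ref{lem:Lnetw}, not through ``eigenvalue data''. That explicit formula is what makes your reconstruction non-circular. Density of plaque vertices is then unnecessary: a positive flag triple has trivial stabilizer, so $\rho(\gamma)$ is determined by the triples at $P$ and $\gamma P$.
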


The shearing cocycle for a given Hitchin representation can be determined by the triangle-invariants of triples of flags and ``double-invariants" of quadruple of flags (the latter invariant was introduced by Fock-Goncharov and generalizes the classical notion of cross-ratio).The exact definition in terms of the \textit{slithering map}, as in the work of \cite{MMMZ} that we shall refer to later.

\subsection{Pleated representations in higher rank} 

We now briefly summarize the extension of Bonahon-Dreyer coordinates to complex representations into $\pslnc$, developed recently by  Maloni-Martone-Mazzoli-Zhang in \cite{MMMZ}, which provides the framework used throughout this paper.

In what follows, let $\lambda$ be a geodesic lamination on $S$, and let $\tilde{\lambda}$ denote its lift to the universal cover $\tilde{S}$; then $\partial \tilde{\lambda} \subset \partial_\infty \tilde{S}$ denotes the set of ideal endpoints of leaves of  $\tilde{\lambda}$. The paper \cite{MMMZ} introduces a higher-rank analogue of a pleated plane  (pleated along $\tilde{\lambda}$) by replacing the geometric object in the symmetric space $\mathbb{H}^3$ with just the associated limit map to the flag variety:

\begin{defn}[$n$-pleated representation]\label{defn:pleat} 
A representation $\rho:\pi_1(S)\to \pslnc$ is said to be \emph{n-pleated along
$\lambda$} if there exists a $\rho$--equivariant map
\[
\xi:\partial\widetilde{\lambda}\to \mathcal{F}(\mathbb{C}^n),
\]
called the \emph{$\lambda$--limit map}, satisfying the following conditions:
\begin{enumerate}
  \item (\emph{$\lambda$--transversality}) For every leaf of $\widetilde{\lambda}$
  with endpoints $x,y\in\partial\widetilde{\lambda}$, the pair of flags
  $\xi(x),\xi(y)$ is transverse, i.e. is in general position. 

  \item (\emph{$\lambda$--hyperconvexity}) For every complementary ideal triangle
  (plaque) of $\widetilde{\lambda}$ with vertices $x,y,z$, the triple
  $\xi(x),\xi(y),\xi(z)$ is in general position.

  \item (\emph{$\lambda$--Anosov property}) The limit map $\xi$ induces associated $\mathbb{C}^n$-bundles over $T^1\lambda \subset T^1S$, and the dynamics of the lift of the geodesic flow there is Anosov.
\end{enumerate}
\end{defn} 

\noindent \textit{Remark.} The last condition is a weaker version of the (Borel-)Anosov property
introduced by Labourie, alluded to earlier, where the domain of the limit map is the entire ideal boundary $\partial_\infty \tilde{S}$.  The space of such $n$-pleated representations forms an open subset $\mathcal{R}(\lambda,n)\subset \rnc$ that includes the Hitchin representations $\text{Hit}_n(S)$. 

\medskip

Given $\rho\in R(\lambda,n)$ with $\lambda$--limit map $\xi$, they introduce  the \textit{complexified} version of the Bonahon-Dreyer shearing cocycle, via the (complex-valued) Fock--Goncharov triangle and edge invariants
defined by the configurations of flags determined by $\xi$. Indeed, they showed that these invariants define
a complex-valued cocycle in a vector space $\mathcal{Y} (\lambda,n;\mathbb{C}/2\pi i\mathbb{Z})$, whose real part recovers the Bonahon-Dreyer coordinates of a Hitchin representation (as in Theorem \ref{thm:bondrey}), and whose imaginary part records the bending deformation.
Their main result then generalizes Bonahon's Theorem \ref{thm:bon} as follows:

\begin{thm}[{\cite[Theorem~4.15]{MMMZ}}]\label{bon-gen} 
For any maximal lamination $\lambda$, the space of (conjugacy classes) of $n$-pleated representations $\mathcal{R}(\lambda,n)$ is biholomorphic to
\[
C(\lambda,n) + i\,\Y(\lambda,n;\mathbb{R}/2\pi \mathbb{Z}),
\;\subset\;
\Y(\lambda,n;\mathbb{C}/2\pi i\mathbb{Z}),
\]
where $C(\lambda,n)$ is the Bonahon--Dreyer cone parametrizing the Hitchin component.
\end{thm}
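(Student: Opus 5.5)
The plan follows Bonahon's proof of Theorem~\ref{thm:bon}, with pleated planes replaced by the $\lambda$--limit map and Bonahon's $\pslc$ building blocks replaced by unipotent and diagonal matrices adapted to pairs of flags. It has three steps: define a coordinate map, build an inverse by a slithering construction, and check holomorphicity.

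\textbf{Step 1: the coordinate map.} Given $\rho\in\mathcal{R}(\lambda,n)$ with $\lambda$--limit map $\xi$, I would define complex shear-bend invariants as follows.
\begin{enumerate}
\item For each plaque with vertices $x,y,z$, I take the $\frac12(n-1)(n-2)$ Fock--Goncharov triple ratios of $(\xi(x),\xi(y),\xi(z))$. These lie in $\mathbb{C}^*$, so their logarithms lie in $\mathbb{C}/2\pi i\Z$. They are well defined by $\lambda$--hyperconvexity.
\item For each pair of plaques $P,Q$, I define an $(n-1)$-tuple of ``double ratios''. These use the slithering map $\Sigma_{PQ}\in\slnc$, which carries the flag pair of a leaf bounding $P$ to that of a leaf bounding $Q$.
\end{enumerate}
The slithering map is constructed as an ordered infinite product over the plaques separating $P$ from $Q$. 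Each factor is the unique unipotent element fixing one flag of a leaf and sending the opposite flag to the next leaf's flag.

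Convergence of this product is the first analytic input. The $\lambda$--Anosov property gives exponential contraction along $T^1\lambda$, so consecutive leaves deep inside a transverse arc have flags that are exponentially close. The lengths of the gaps between plaques decay geometrically, as in Bonahon's Hölder-cocycle estimates. Together these make $\sum\|\text{factor}-I\|$ converge.

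Next I would check that the resulting assignment is finitely additive and $\pi_1(S)$-invariant. It must also satisfy the linear compatibility relations between the triangle invariants and the boundary behaviour of the edge invariants that cut out $\Y(\lambda,n;\mathbb{C}/2\pi i\Z)$. This gives a map $\Phi:\mathcal{R}(\lambda,n)\to \Y(\lambda,n;\mathbb{C}/2\pi i\Z)$. For Hitchin $\rho$, positivity makes all invariants real, so $\Phi$ recovers the Bonahon--Dreyer map of Theorem~\ref{thm:bondrey}.

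\textbf{Step 2: the inverse.} Given $\sigma=\alpha+i\beta$ with $\alpha\in C(\lambda,n)$, I would reconstruct $\xi$ plaque by plaque. Fix a base plaque and normalize its flag triple by the prescribed triangle invariants. Then define the flags of every other plaque by applying a slithering product. Each factor is now a product of unipotents and diagonal matrices $\exp(\text{diag}(\sigma))$ with complex entries.

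I expect this convergence to be the main obstacle. The imaginary part $\beta$ is only defined mod $2\pi$, and it contributes unitary-diagonal factors of modulus one. The key estimate I would aim for therefore bounds each complex factor's deviation from the identity by the corresponding deviation for the real cocycle $\alpha$, uniformly in $\beta$. This should work because bending contributes only unit-modulus scalars, which commute past the norm estimates. Convergence then follows from the Hitchin case, where the Anosov property of $\alpha$ is known.

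Equivariance of $\xi$ follows from invariance of $\sigma$, which defines $\rho$. Transversality and hyperconvexity hold because general position is an open condition and the product converges uniformly. The $\lambda$--Anosov property would come from a cone-field argument: the contraction rates depend only on $|\exp(\sigma)|=\exp(\alpha)$, so they are inherited from the Hitchin representation with coordinates $\alpha$.

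\textbf{Step 3: holomorphicity and bijectivity.} $\Phi$ and its inverse are mutually inverse by construction, since the invariants of the reconstructed flags are $\sigma$. Holomorphicity holds because every invariant is a rational function of flag coordinates, and those coordinates are locally uniform limits of products of holomorphic functions of $\sigma$. The image is exactly $C(\lambda,n)+i\Y(\lambda,n;\R/2\pi\Z)$. The real part must lie in the Bonahon--Dreyer cone, because the positivity constraints of Theorem~\ref{thm:bondrey} (Thurston-type length inequalities) are exactly what makes the product estimates converge. Conversely, Step 2 shows that every point of that set is realized.
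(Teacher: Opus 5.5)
The paper does not prove this statement; it quotes it from \cite[Theorem~4.15]{MMMZ} and later uses only the slithering machinery behind it (H\"older-extendable maps and the factors $L_\rho(P)$). Measured against that machinery, your convergence argument in Step~2 is sound. Write $L_\sigma(P)$ as a diagonal conjugate of the unipotent $M(t)D(t)^{-1}$, as in Lemma~\ref{lem:Lnetw}. Then $|L_\sigma(P)-I|\le |L_\alpha(P)-I|$ entrywise, where $\alpha=\operatorname{Re}\sigma$. Summability is therefore inherited from the Hitchin representation with coordinates $\alpha$ given by Theorem~\ref{thm:bondrey}. This is exactly the modulus-network observation after Lemma~\ref{lem:Lnetw} and in Corollary~\ref{cor:slithering-networks}. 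Your phrase ``unit-modulus scalars'' is literally true only for the shear phases. The phases of the triangle invariants sit inside the unipotent factor, so there you need the triangle inequality.

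The genuine gap is the identification of the image, and with it the $\lambda$-Anosov property in Step~2. Your reason why $\operatorname{Re}\Phi(\rho)\in C(\lambda,n)$ for an arbitrary $n$-pleated $\rho$ is circular. The slithering products of $\rho$ converge because of $\rho$'s own $\lambda$-Anosov property (H\"older continuity of $\xi$), with no reference to any cone condition. Moreover, the domination above runs the wrong way: it cannot extract information about $\alpha$ from convergence for $\sigma$.

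As you note, the strict inequalities defining $C(\lambda,n)$ are length-positivity conditions: Thurston pairings of the corrected shear components with transverse measures on $\lambda$. What you need is a length identity that expresses the growth rates of the dominated splitting of $\rho$ along $\lambda$ as such pairings of $\operatorname{Re}\sigma_\rho$. For a closed leaf these rates are the numbers $\log|\lambda_{i+1}/\lambda_i|$. You also need positivity of these rates, uniformly over the compact set of transverse measures. This is the content of \cite[\S3.3 and Theorem~10.1]{MMMZ}, which the paper itself invokes for closed leaves in the proof of Theorem~\ref{thm:main}. The same identity, read in the other direction, is what would turn your one-line cone-field claim into a proof that the reconstructed representation is $\lambda$-Anosov. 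As written, nothing relates the flat bundle of that representation to the bundle of the Hitchin representation with coordinates $\alpha$.

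Two smaller points. First, Step~3 only shows that $\Phi$ composed with your reconstruction is the identity. Injectivity needs two further facts: the slithering maps of a given $\rho$ coincide with the products of the cocycle-determined factors (\cite[Lemma~7.11]{MMMZ}), and the $\lambda$-limit map is unique, so that $\Phi$ is well defined. Second, transversality does not follow from openness of general position, since limits of transverse pairs need not be transverse. It holds instead because each leaf's flag pair is the image of a transverse pair under an invertible slithering matrix.
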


In particular, fixing the real part of the cocycle determines a \emph{bending fiber},
consisting of representations obtained by complex bending of a fixed Hitchin
representation along $\lambda$.

\noindent Finally, we introduce the following terminology, related to that in \cite{AlessandriniDavaloLi24}: 

\begin{defn}[Quasi-Hitchin representations]\label{defn:quasiH}
An $n$-pleated representation $\rho\in \mathcal{R}(\lambda,n)$ in the bending fiber of a Hitchin representation is called \emph{quasi-Hitchin} if it is Borel-Anosov as a $\pslnc$-representation.
\end{defn}

\subsection{Snakes and the monodromy formula}

We recall the monodromy formula expressing the holonomy $\rho(\gamma)$ of a loop $\gamma$ in terms of Fock--Goncharov coordinates (i.e. the triangle and edge invariants) with respect to an ideal triangulation $\mathcal T$ (see \cite{FG} and \cite{BGup}). Note that an ideal triangulation is a special example of a maximal lamination, which has \textit{finitely many} leaves (the edges of $T$). 

\medskip

Assume that the oriented loop $\gamma$ is in general position with respect to $\mathcal T$, so that $\gamma$ crosses edges of $\mathcal T$ transversely and avoids the vertices. Lifting $\gamma$ to the universal cover, we obtain a sequence of ideal triangles $t_1,t_2,\dots,t_k$ and edges $e_1,e_2,\dots,e_k$ crossed successively by the lifted path. To each triangle $t_i$ and each edge $e_i$ one associates matrices $T(t_i),E(e_i)\in \slnc$ defined in terms of the corresponding triangle and edge invariants. These matrices change the  projective basis associated with one side of $\mathcal T$ to another, where the projective basis associated with a side is the one in which the two endpoint flags become the standard flag and its opposite flag. 

The \emph{monodromy formula} then asserts that the holonomy of $\gamma$ is given, up to conjugation, by the ordered product
\begin{equation}\label{decomp}
\rho(\gamma)
\;=\;
T(t_1)^{\delta_1}E(e_1)\,T(t_2)^{\delta_2}E(e_2)\cdots T(t_k)^{\delta_k}E(e_k),
\end{equation}
where each $\delta_i\in\{-1,1\}$ records whether $\gamma$ turns left/right when passing through the corresponding ideal triangle.  We refer to \cite{BGup} for a more detailed exposition (with examples and figures); as in that paper, we normalize a matrix in $\pslnc$ to obtain a lift in $\slnc$ by choosing the determinant-one representative in the projective class.

\medskip 

\noindent \textbf{Snakes and the triangle matrix.} 
Let $P$ be an ideal triangle with an ordered triple of flags $(A,B,C)$ at its vertices, and let $t$ denote its $\frac{1}{2}(n-1)(n-2)$ triangle invariants. 
Consider the $(n-1)$-triangulation of $P$ whose vertices are indexed by triples of nonnegative integers $(a,b,c)$ with $a+b+c=n-1$; this divides $P$ into $\frac{1}{2}(n-1)(n-2)$ smaller triangles.

To each vertex of the smaller triangle one associates the one-dimensional subspace
\[
V_{a,b,c}:=A_{n-a}\cap B_{n-b}\cap C_{n-c}
\]
where $A_j$ denotes the $j$-dimensional subspace of the flag $A$ (and similar notation for the subspaces of $B$ and $C$).

A \emph{snake} is an oriented path along edges of this $(n-1)$--triangulation starting at a vertex of $\triangle$ and terminating at the opposite side after exactly $n-1$ steps.
Choosing a nonzero vector on the line at the head of the snake and propagating it using a relation across each small (unshaded) triangle produces a projective basis of $\mathbb{C}^n$ associated to the snake.

The triangle matrix 
\begin{equation}\label{eq:Tmat}
T(t)=M(t)\,S,
\end{equation}
is the change-of-basis matrix between the projective bases associated to the two snakes along consecutive sides of the ideal triangle, together with a ``side-reversal" matrix $S$ that swaps the opposite flags associated with the endpoints of an edge (see \eqref{eq:S}). Here $M(t)$ is a product of matrices corresponding to elementary snake moves (as the path crosses the smaller triangles -- see for example \cite[Fig. 6]{BGup}) whose entries are Laurent monomials in the triangle invariants of $t$ (see \cite[Proposition 9.2]{FG} for details, and \cite[\S2.5]{BGup} for examples). 
In particular, for $1\le i\le n-1$ there are elementary matrices
\begin{equation}\label{eq:fihi}
F_i:=I_n+E_{i+1,i},\qquad
H_i(x):=\mathrm{diag}(\underbrace{1,\dots,1}_{n-i},\underbrace{x,\dots,x}_{i})
\end{equation} 
such that $M(t)$ is an explicit ordered product of these $F_i$ and $H_j(x)$, for various $i,j \in \{1,2,\cdots, n\}$, where  $x$ is a triangle invariant of $t$.

\medskip 

\noindent \textbf{The edge matrix.}
The edge matrix $E(e)$ is the change-of-basis matrix between the projective snake bases corresponding to an (oriented)  edge of $\mathcal{T}$, when viewed as sides of the two adjacent ideal triangles. This depends on the $(n-1)$ edge-invariants $e=(z_1,\dots,z_{n-1})\in (\mathbb{C}^*)^{n-1}$ associated with the edge as follows:
\begin{equation}\label{eq:Ediag}
E(e)=D(e)\,S,\qquad
D(e):=\mathrm{diag}\bigl(1,\ z_{n-1},\ z_{n-2}z_{n-1},\ \dots,\ z_1z_2\cdots z_{n-1}\bigr),
\end{equation}
where $S$ is the fixed  anti-diagonal ``side-reversal" matrix
\begin{equation}\label{eq:S}
S=\begin{pmatrix}
0 & \cdots & \cdots & 0 & 1 \\
\vdots & & \reflectbox{$\ddots$} & -1 & 0\\
\vdots & \reflectbox{$\ddots$} & \reflectbox{$\ddots$} & \reflectbox{$\ddots$} & \vdots \\
0 & \reflectbox{$\ddots$} & \reflectbox{$\ddots$} & & \vdots \\
(-1)^{n+1} & 0 & \cdots & \cdots & 0
\end{pmatrix}.
\end{equation}

\medskip

\medskip

\noindent We shall refer to the matrix products $T(t)^{\pm 1}E(e)$ appearing in \eqref{decomp} as the \textit{building blocks}.

\subsection{Planar networks for building-blocks}

\begin{defn} A \emph{weighted planar network} $(\Gamma, \omega)$  is a finite acyclic directed planar graph $\Gamma$, with $n$ sources on the left boundary and $n$ sinks on the right boundary, and with a weight function $\omega:\mathcal{E} \to \mathbb{C}$ defined on the set of directed edges. The \emph{weight matrix} $\mathcal{W}(\Gamma, \omega)$ of such a network is the $n\times n$ matrix whose $(i,j)$--entry is the sum of the weights of all directed paths from source $i$ to sink $j$, where the weight of a path is the product of the weights of its edges.
\end{defn}

One fundamental result concerning weighted planar networks  is the following result attributed to Lindstr\"{o}m (\cite{Lind}, see \cite[Lemma 7]{CN} or \cite[Lemma 1]{FZ} for the proof). In what follows, a \textit{vertex-disjoint family} of paths is a collection of paths (in a given planar network), no two of which have a common vertex; its weight is defined to be the product of the weights of the paths in the family (for an example see \cite[Example 6]{BGup}). 

\begin{lem}[Lindstr\"{o}m's lemma]\label{lem:lind}
	For $I,J \subset \{1,2,,\ldots, n\}$ where $\lvert I \rvert = \lvert J\rvert$, any minor $\Delta_{I,J}$ of the weight matrix of a planar
	network is equal to the sum of weights of all vertex-disjoint families of paths from the sources indexed by $I$ to the sinks indexed by $J$.  
\end{lem}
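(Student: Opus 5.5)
We prove Lindström's lemma by a sign-reversing involution on families of paths (the Gessel--Viennot argument).

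\emph{Expanding the minor.} Write $I=\{i_1<\dots<i_k\}$ and $J=\{j_1<\dots<j_k\}$. By the Leibniz formula and the definition of the weight matrix,
\[
\Delta_{I,J}=\sum_{\sigma\in S_k}\operatorname{sgn}(\sigma)\prod_{r=1}^k \mathcal{W}_{i_r,j_{\sigma(r)}}
=\sum_{(\sigma,P_1,\dots,P_k)}\operatorname{sgn}(\sigma)\,\omega(P_1)\cdots\omega(P_k).
\]
The last sum runs over all $k$-tuples of directed paths in which $P_r$ goes from source $i_r$ to sink $j_{\sigma(r)}$. Since $\Gamma$ is finite and acyclic, there are only finitely many paths, so this sum is finite.

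\emph{Cancelling intersecting tuples.} Call a tuple intersecting if two of its paths share a vertex. For such a tuple, let $r$ be the smallest index such that $P_r$ meets another path. Let $v$ be the first vertex along $P_r$ that lies on some other path, and let $s$ be the smallest index $s\neq r$ with $v\in P_s$. Swap the tails of $P_r$ and $P_s$ after $v$, and replace $\sigma$ by $\sigma\circ(r\,s)$. This operation preserves the multiset of edges, so the weight is unchanged, while the sign flips.

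It remains to check that the map is an involution. The set of vertices covered by the tuple is unchanged. The paths $P_{r'}$ with $r'<r$ are untouched, so they still meet no other path, and hence $r$ is still the minimal intersecting index. The initial segment of $P_r$ up to $v$ is unchanged, so $v$ is again the first shared vertex on $P_r$. The set of paths through $v$ is the same as before, so $s$ is selected again. Applying the map twice therefore returns the original tuple. Consequently the intersecting tuples cancel in pairs, and $\Delta_{I,J}$ equals the signed sum over vertex-disjoint tuples.

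\emph{Planarity forces the identity permutation.} This is the main point. The sources lie in order on the left boundary and the sinks in order on the right boundary of a planar disk, with the sources indexed consistently with the sinks. Suppose a vertex-disjoint family had $\sigma\neq\mathrm{id}$. Then there are $r<r'$ with $\sigma(r)>\sigma(r')$. The path $P_r$ is a curve separating the disk, with source $i_{r'}$ and sink $j_{\sigma(r')}$ lying on opposite sides of it. By the Jordan curve theorem, $P_{r'}$ would have to cross $P_r$. In a planar embedding, edges cross only at vertices, so the two paths would share a vertex, contradicting disjointness.

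Hence every surviving term has $\sigma=\mathrm{id}$ and sign $+1$. So $\Delta_{I,J}$ is the sum of the weights of all vertex-disjoint families from the sources in $I$ to the sinks in $J$, as claimed.
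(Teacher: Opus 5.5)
Your proof is correct. It is the standard Gessel--Viennot tail-swapping involution followed by the Jordan-curve argument that planarity forces $\sigma=\mathrm{id}$, which is exactly the argument in the sources the paper cites for this lemma (\cite[Lemma 7]{CN}, \cite[Lemma 1]{FZ}); the paper itself gives no proof of its own. The details you leave implicit are immediate: $s>r$ by minimality of $r$, so the paths $P_{r'}$ with $r'<r$ really are untouched; and directed paths in an acyclic graph are simple, so $v$ remains the first shared vertex after the swap.
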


In particular, in the case that all weights are real and positive, the weight matrix is \textit{totally nonnegative} (see, for example, \cite{FZ}).

\smallskip

\noindent The key observation in \cite{BGup} is that:

\begin{lem}[\cite{BGup}]\label{keylem}  Each building block in the decomposition \eqref{decomp} is a weight matrix of a weighted planar network. Moreover, the weights are positive Laurent monomials in the Fock-Goncharov coordinates (i.e., products of some of the coordinates and their reciprocals with positive coefficients). 
\end{lem}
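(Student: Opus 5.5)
The plan is to show separately that the two factors $T(t)^{\pm1}$ and $E(e)$ are weight matrices of weighted planar networks with positive Laurent monomial weights, and then concatenate the networks. Concatenation works because the weight matrix of a concatenation is the product of the weight matrices: a path from source $i$ to sink $j$ in the concatenated network splits uniquely at the middle layer of $n$ vertices. Identifying the sinks of the first network with the sources of the second gives $\mathcal{W}(\Gamma_1\circ\Gamma_2)=\mathcal{W}(\Gamma_1)\mathcal{W}(\Gamma_2)$. The positivity of the monomials is preserved under this operation.

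\textbf{Elementary factors.} The diagonal matrix $H_i(x)$ is the network of $n$ horizontal lines, with weight $x$ on the lines $n-i+1,\dots,n$ and weight $1$ elsewhere. Likewise $D(e)$ is $n$ horizontal lines carrying the products $z_k\cdots z_{n-1}$, which are positive monomials. The matrix $F_i=I_n+E_{i+1,i}$ is $n$ horizontal lines together with one slanted edge of weight $1$ from line $i+1$ to line $i$. Hence $M(t)$, being an ordered product of $F_i$ and $H_j(x)$ by \eqref{eq:Tmat} and \cite[Proposition 9.2]{FG}, is a planar network weight matrix. For $T(t)^{-1}=S^{-1}M(t)^{-1}$ I use $H_i(x)^{-1}=H_i(x^{-1})$ (still a positive monomial). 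The inverse $F_i^{-1}=I-E_{i+1,i}$ has a negative entry, so it has to be handled together with the $S$ factors.

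\textbf{The side-reversal matrix.} This is the main obstacle: $S$ is anti-diagonal with alternating signs and is not itself a planar network matrix with positive weights. I would pair the $S$'s up in the building block. For $\delta=1$ the block is $M(t)\,S\,D(e)\,S$, and $S D S^{-1}=\pm\,$ the diagonal matrix with the entries of $D$ reversed. Moreover $S^2=(-1)^{n-1}I$, which becomes trivial in $\mathrm{PSL}$ after the determinant-one normalization. So $T(t)E(e)$ equals $M(t)$ times a positive diagonal matrix, up to a sign or root of unity absorbed by the normalization, which is a network.

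For $\delta=-1$ the block is $S^{-1}M(t)^{-1}D(e)S$. Here I conjugate by $S$: $S^{-1}F_i^{-1}S$ and $S^{-1}H_i(x)^{-1}S$. Conjugating by $S$ reverses indices $k\mapsto n+1-k$ and introduces signs $(-1)^{k}$. The entry $-E_{i+1,i}$ is sent to $\pm E_{n-i,n+1-i}$, and the two adjacent sign factors differ by exactly one power of $-1$, so the result is $+E_{n-i,n+1-i}$, an upper unipotent elementary matrix. That is a network with a slanted edge of weight $1$ going the other way. The diagonal factors remain positive diagonal. Thus $S^{-1}M(t)^{-1}D(e)S$ is a product of positive elementary networks, up to an overall sign.

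After concatenating all of these, every edge weight is $1$, a triangle invariant, its inverse, or a product of edge invariants. These are positive Laurent monomials, and the planarity and acyclicity of the concatenation are clear. The delicate point I expect to check carefully is this sign bookkeeping in $S^{-1}F_iS$ and the scalar $S^2$, making sure that all residual signs are global scalars that disappear after the normalization to $\mathrm{SL}_n$/$\mathrm{PSL}_n$ representatives.
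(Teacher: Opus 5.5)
Your proposal is correct and follows essentially the same route as the paper's sketch: you realize $M(t)$ and the diagonal factor $SD(e)S^{\pm1}$ as positive networks, and you remove the negative entries of $M(t)^{-1}$ by conjugating by $S$. The only differences are these. You conjugate each elementary factor directly, obtaining $S^{-1}F_i^{-1}S=I+E_{n-i,n+1-i}$ and $S^{-1}H_i(x)^{-1}S$ positive diagonal, where the paper conjugates each snake-move step and cites \cite[Lemma~3.9]{BGup}. You also explicitly treat the global sign coming from $S^2=(-1)^{n-1}I$ as absorbed by the projective normalization, which the paper's sketch leaves implicit.
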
 

\begin{proof}[Sketch of the proof]
Recall that from Fock-Goncharov's theory of snakes (described in the previous subsection), each building block appearing in \eqref{decomp} is of the form $T(t)^{\pm 1}E(e)$.

The case when the building block is $T(t)E(e)$, is simpler: In the decomposition $T(t) = M(t)S$ (see \eqref{eq:Tmat}), the matrix $M(t)$ is a product of elementary matrices $F_i$ and $H_i(X)$. Each of these elementary matrices is the weight matrix of an explicit planar network (see  \cite[Figure 16]{BGup}) whose weights are exactly the triangle invariants or $1$. Concatenating these planar networks yields a weighted planar network whose weight matrix is $M(t)$.

Moreover, the matrix $S \cdot E(e)$ is diagonal and is realized by a planar network consisting of a disjoint union of horizontal edges weighted by its diagonal entries. Since concatenation of planar networks corresponds to the multiplication of their weight matrices, concatenating the networks for $M(t)$ and $S \cdot E(e)$ yields the planar network for $T(t)E(e)$. By construction, these individual networks only use triangle invariants, edge invariants, and $1$s as their edge weights. Therefore, the resulting concatenated network inherently carries weights that are positive Laurent monomials in the Fock-Goncharov coordinates.

For the case of the building block $T(t)^{-1}E(e)$, the relation $S^2 = \pm I$ (depending on the parity of $n$) is used to write $T(t)^{-1}E(e) = S \cdot M(t)^{-1} \cdot S \cdot S \cdot E(e)$. Constructing the network for $M(t)^{-1}$ directly introduces negative entries. To resolve this, $M(t)^{-1}$ is factored into individual steps corresponding to elementary snake moves. 

By multiplying each step on both sides by the matrix $S$, matrices of the form $S \cdot Step(k)(t) \cdot S$ are obtained, which map to planar networks with strictly positive weights (that are products of the Fock-Goncharov coordinates and their reciprocals, see \cite[Lemma 3.9]{BGup}). Concatenating these step networks with the network for $S \cdot E(e)$ yields the required weighted planar network for $T(t)^{-1}E(e)$ (see Figure \ref{fig:path-to-top}). Since the weights of the individual step networks are already of the form asserted, the final concatenated network naturally preserves the property that all weights are positive Laurent monomials.
\end{proof}

\begin{figure}
\centering
\begin{tikzpicture}
    \foreach \y in {0, ..., 5} {\draw (0,\y) -- (12,\y);}

    \foreach \k in {0, 1, 2, 3, 4} {
    \pgfmathtruncatemacro{\ymax}{5-\k}
    
    \draw (1 + 2*\k, 0) -- (1 + 2*\k + 0.4*\ymax, \ymax);
    
    \foreach \y in {0, ..., \ymax} {
        \filldraw (1 + 2*\k + 0.4*\y, \y) circle (1pt);
    }
    }

    \foreach \z in {0, ..., 5}{\filldraw (0,\z) circle (1pt);}
    \foreach \z in {0, ..., 5}{\filldraw (12,\z) circle (1pt);}

    \draw[red] (0,4.9) -- (12,4.9);
    \draw[red] (intersection of 0,0.1--4,0.1 and 0.9,0--2.9,5) -- (intersection of 0,4.9--12,4.9 and 0.9,0--2.9,5);
    \draw[red] (0,0.1)-- (intersection of 0,0.1--4,0.1 and 0.9,0--2.9,5);
    \draw[red] (0,1.1)-- (intersection of 0,1.1--4,1.1 and 0.9,0--2.9,5);
    \draw[red] (0,2.1)-- (intersection of 0,2.1--4,2.1 and 0.9,0--2.9,5);
    \draw[red] (0,3.1)-- (intersection of 0,3.1--4,3.1 and 0.9,0--2.9,5);
    \draw[red] (0,4.1)-- (intersection of 0,4.1--4,4.1 and 0.9,0--2.9,5);

    \foreach \p/\t in {(-0.2,0)/$1$, (12.2,0)/$1$, (-0.2,1)/$2$, (12.2,1)/$2$, (-0.5,4)/$n-1$, (12.5, 4)/$n-1$, (-0.2,5)/$n$, (12.2,5)/$n$, (-0.4,2.5)/$\vdots$, (12.4,2.5)/$\vdots$, (2,0.3)/$y^{-1}_{n-2}$, (8,0.2)/$w_1$, (6.5,1.2)/$w_2$, (11,0.2)/$e_1\cdots e_{n-1}$, (11,1.2)/$e_2\cdots e_{n-1}$, (11,4.2)/$e_{n-1}$}{\node at \p {\t};}

    \node at (7,-1) {$w_1=y_{1_1}^{-1}y_{1_2}^{-1}\cdots y_{1_{n-2}}^{-1}; \qquad w_2=y_{1_1}^{-1}y_{1_2}^{-1}\cdots y_{1_{n-3}}^{-1}$};
\end{tikzpicture}
\caption{The network for $T^{-1}E$ and the distinguished paths from all sources to the top sink.}
\label{fig:path-to-top}
\end{figure}

\medskip
\noindent\textbf{Two structural lemmas} We record a couple of observations about the structure of the planar networks of the building blocks, that we shall use later:

\begin{lem}\label{lem:path-to-top}
In the planar network for $T(t)^{-1}E(e)$, there is a distinguished path from each source to the $n$-th sink of weight $1$.
\end{lem}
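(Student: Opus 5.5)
The plan is to read off the structure of the network for $T(t)^{-1}E(e)$ constructed in the proof of Lemma \ref{keylem}, as depicted in Figure \ref{fig:path-to-top}. That network is a concatenation of the step networks for $S\cdot Step(k)(t)\cdot S$ (the slanted columns, $k=1,\dots,n-1$), followed by the diagonal network for $S\cdot E(e)$. We will argue the existence of the distinguished paths in two stages: first inside the concatenated step networks, then through the final diagonal block.

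\textbf{Step 1: the diagonal block.} By \eqref{eq:Ediag} and \eqref{eq:S}, $S\cdot E(e) = S D(e) S$ is diagonal. Up to the sign conventions already absorbed in the proof of Lemma \ref{keylem}, it is $\mathrm{diag}$ of the entries of $D(e)$ in reversed order. So the horizontal edge ending at the $n$-th sink carries the entry $1$ of $D(e)$; this is consistent with the labels in Figure \ref{fig:path-to-top}, where levels $1,\dots,n-1$ carry $e_1\cdots e_{n-1},\dots,e_{n-1}$ and level $n$ is unlabelled. Hence any path arriving at level $n$ before this block can be extended to the $n$-th sink with weight $1$.

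\textbf{Step 2: the first step network.} The first slanted column, coming from the first elementary snake move, contains a diagonal edge that meets every horizontal level. I would start from source $i$, travel horizontally to this slanted edge, climb it to level $n$, and then run horizontally along the top level to the end. This is the red path in the figure.

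\textbf{Step 3: the weights.} Here I must check two facts from the explicit form of $S\cdot Step(k)(t)\cdot S$ in \cite[Lemma 3.9]{BGup}:
\begin{enumerate}
\item the initial horizontal segments before the first slanted column have weight $1$;
\item the slanted edges, and all top-level horizontal edges in every subsequent step network, have weight $1$.
\end{enumerate}
The nontrivial weights such as $y^{-1}_{n-2}$ and the monomials $w_1,w_2$ sit on lower horizontal segments. This is plausible because conjugating by $S$ turns the lower-triangular factors $F_i$ into upper-triangular ones (upward edges of weight $1$). It also turns the $H_i(x)^{-1}$ factors into diagonal matrices whose last entry is $1$: $H_i$ has its leading $n-i$ entries equal to $1$, and reversal by $S$ moves a $1$ to the last position.

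The main obstacle is confirming that the top level is never scaled, i.e.\ that in every $S\cdot Step(k)(t)\cdot S$ the $(n,n)$ diagonal contribution is $1$. I would verify this by writing $Step(k)$ as a product of $F_j$'s and $H_j(x)^{-1}$'s. Each $S H_j(x)^{-1} S$ has $(n,n)$-entry equal to the $(1,1)$-entry of $H_j(x)^{-1}$, which is $1$ since $j\le n-1$. The signs from $S$ cancel in the conjugation $S(\cdot)S^{-1}$, and $S^{-1}=\pm S$ contributes only a global sign already handled in Lemma \ref{keylem}. Given this, the concatenated path in Step 2 has weight equal to a product of $1$'s, which proves the lemma.
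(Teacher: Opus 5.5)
Your proposal is correct and follows essentially the paper's route. The paper also reads the path off the concatenated networks for the conjugated step factors $S\operatorname{Step}(k)S$ (rising edges of weight $1$, and horizontal edges of weight $1$ on the levels above them), followed by the diagonal block $SE(e)$ with top entry $1$; this is exactly the path of Figure \ref{fig:path-to-top}, and your check that each $SH_j(x)^{-1}S^{-1}$ has $(n,n)$-entry $1$ makes explicit what the paper leaves to the figures of \cite{BGup}. One small correction: the factors of $M(t)^{-1}$ are $F_j^{-1}=I-E_{j+1,j}$, not $F_j$, and it is $SF_j^{-1}S^{-1}=I+E_{n-j,n-j+1}$ that gives rising edges of weight $+1$ (conjugating $F_j$ itself would give weight $-1$).
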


\begin{proof}
For the factor $S\operatorname{Step}(d-r)S$, the network in \cite[Figure~23]{BGup} has a path of weight $1$ from level $r$ to level $r+1$, and horizontal paths of weight $1$ on the levels above it.  Concatenating these paths through Equation~(3.18) of \cite{BGup} gives a path from every source to the top level.  The final factor $SE(e)$ is diagonal and has top diagonal entry $1$ as in \eqref{eq:Ediag}.  This is exactly the path shown in Figure \ref{fig:path-to-top}.
\end{proof}

\begin{lem}\label{lem:universal-routing}
Let $I,J\subset \{1,2,\ldots,n\}$ with $|I|=|J|=r$.  In the weighted planar network associated with the product of the building blocks 
\[
 T^{-1}E\,\cdot\,TE
\]
there is a vertex-disjoint family from sources $I$ to sinks $J$.
\end{lem}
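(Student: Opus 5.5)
We use the two distinguished kinds of paths already available: in the network for $T(t)^{-1}E(e)$, Lemma \ref{lem:path-to-top} supplies paths climbing to the top level, and the concatenated network for $T(t)E(e)$ contains the horizontal paths coming from the elementary networks $F_i$, $H_i(x)$ and the diagonal factor $S\cdot E(e)$. The overall route is: in the first block, push the $r$ paths starting at $I$ up to the top $r$ levels $\{n-r+1,\dots,n\}$; in the second block, bring them down from those levels to the sinks $J$. Since concatenation of networks identifies sinks of the first block with sources of the second, two vertex-disjoint families glue into one vertex-disjoint family from $I$ to $J$.

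\textbf{Step 1 (first block).} Write $I=\{i_1<\dots<i_r\}$. We want a vertex-disjoint family in the network of $T^{-1}E$ from $I$ to $\{n-r+1,\dots,n\}$. This is equivalent, by Lindstr\"om's lemma (Lemma \ref{lem:lind}) and positivity of the weights (Lemma \ref{keylem}), to the nonvanishing of the minor $\Delta_{I,\{n-r+1,\dots,n\}}$ of $T^{-1}E$. To build the family combinatorially, use the staircase structure of the networks for $S\,\mathrm{Step}(k)\,S$ (Figure \ref{fig:path-to-top}). Each step lets a path move up exactly one level at a slanted edge, and the levels above carry weight-one horizontal paths. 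We route the path from $i_r$ upward first and each lower path $i_s$ later. Concretely, path $i_s$ uses the slanted edges in the steps after those used by $i_{s+1}$, and it stops once it reaches level $n-r+s$, after which it continues horizontally. Planarity and the nested staircase shape should keep these paths disjoint. The $r$ paths then exit through $SE(e)$ on the top $r$ horizontal lines.

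\textbf{Step 2 (second block).} Next we need a vertex-disjoint family in the network of $TE$ from the top levels $\{n-r+1,\dots,n\}$ to $J$. Here $M(t)$ is a product of the $F_i=I+E_{i+1,i}$ and diagonal matrices. In the network convention the subdiagonal edges let a path move between adjacent levels in one direction. We check that this is the downward direction, and reorient via the factor $S$ if it is not. In the snake factorization every $F_i$ appears in each "sweep" (\cite[Prop.~9.2]{FG}), in the pattern of a reduced word for the longest element. So $M(t)$ is a totally positive matrix, i.e.\ all of its minors are nonzero. This holds because the word $F_{1}\,(F_2F_1)\cdots(F_{n-1}\cdots F_1)$ pattern realizes the longest permutation, which is exactly the criterion of \cite{FZ}. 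It then gives the needed nonvanishing minor, and hence by Lindstr\"om a vertex-disjoint family from any $r$-set to any $r$-set. In particular it gives one from the top $r$ levels to $J$ after composing with the diagonal network $SE(e)$. Alternatively, one can route explicitly: the path from level $n-r+s$ descends to $j_s$, with the lower paths descending first.

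\textbf{Main obstacle.} The hard part is the precise bookkeeping of the index conventions: which direction the slanted edges go after conjugation by $S$, and whether the staircase of Figure \ref{fig:path-to-top} really allows $r$ disjoint upward paths rather than a single one. If Step 1 fails in this generality, the fallback is to observe that $T^{-1}E\cdot TE$ contains the factor $M(t)$ of $TE$, which is totally positive. In that case it suffices to find, in $T^{-1}E$, a family from $I$ to some $r$-set $K$; the second block can then connect $K$ to $J$. Such a family exists because $T^{-1}E$ is invertible with a nonnegative weight matrix. Indeed, by the Laplace expansion $\det = \sum_K \pm \Delta_{I,K}\Delta_{I^c,K^c} \neq 0$, so some $\Delta_{I,K}\neq 0$, and Lindstr\"om's lemma turns this into the required family.
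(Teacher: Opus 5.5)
Your overall route is the paper's. In the $T^{-1}E$ network you push $I$ up to $I_{\mathrm{top}}=\{n-r+1,\ldots,n\}$ along the nested rising diagonals: the path from $i_s$ climbs the $(r-s+1)$-st diagonal to level $n-r+s$, which is possible since $i_s\le n-r+s$. In the $TE$ network you then bring the paths down to $J$ along the descending diagonals, lower paths first. Higher paths climb on earlier diagonals and each stops where its diagonal ends, and this nesting makes the family vertex-disjoint. So your Step 1, together with the ``alternatively, route explicitly'' sentence of Step 2, is exactly the paper's proof.

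The total-positivity justification you give first in Step 2 is false, however, and so is the fallback built on it. The matrix $T(t)E(e)=M(t)\,\bigl(SD(e)S\bigr)$ is lower triangular: every $F_i=I+E_{i+1,i}$ is, and $SD(e)S$ is diagonal. This reflects the fact that its network has only descending slanted edges. Hence $\Delta_{K,J}(TE)=0$ whenever $k_s<j_s$ for some $s$; for example $(TE)_{1,2}=0$, and there is no path from source $1$ to sink $2$.

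A product of $F_i$'s along a reduced word for $w_0$ with positive parameters is totally positive only in the sense of the lower unitriangular group: the minors not forced to vanish by triangularity are positive. So ``all minors nonzero'' and ``a family from any $r$-set to any $r$-set'' are wrong. What is true, and is all you need, is $\Delta_{I_{\mathrm{top}},J}(TE)>0$ for positive parameters, because $I_{\mathrm{top}}$ dominates every $J$ componentwise ($j_s\le n-r+s$).

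The fallback fails for the same reason. $T^{-1}E$ is upper triangular with nonzero diagonal, so $\Delta_{I,I}(T^{-1}E)\neq 0$ and your Laplace argument may return $K=I$. Then, for instance with $I=\{1\}$ and $J=\{2\}$, the $TE$ block cannot connect $K$ to $J$. The intermediate set must dominate both $I$ and $J$ componentwise, and $I_{\mathrm{top}}$ is the universal such choice. That is why Step 1, going all the way to the top, is essential rather than optional. Drop the total-positivity claim and the fallback, and keep the two explicit routings.
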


\begin{proof}
Write $I=\{i_1<\cdots<i_r\}$ and put
\[
 I_{\mathrm{top}}=\{n-r+1,\ldots,n\}.
\]
In the $T^{-1}E$ network, construct a path from source-$i_r$ to sink-$n$ using the first rising diagonal, source-$i_{r-1}$ to sink-$(n-1)$ using the second, and so on.  The paths are nested and vertex-disjoint; see Figure \ref{fig:route-up}.  In the $TE$ network, construct paths down to the ordered levels of $J$ using the descending diagonals; see Figure \ref{fig:route-down}.  Concatenating these paths defines the required family.
\end{proof}

\begin{figure}
\centering
\begin{subfigure}{.48\linewidth}
\centering
\resizebox{\linewidth}{!}{
\begin{tikzpicture}
    \foreach \y in {0, ..., 5} {\draw (0,\y) -- (12,\y);}

    \foreach \k in {0, 1, 2, 3, 4} {
    \pgfmathtruncatemacro{\ymax}{5-\k}
    
    \draw (1 + 2*\k, 0) -- (1 + 2*\k + 0.4*\ymax, \ymax);
    
    \foreach \y in {0, ..., \ymax} {
        \filldraw (1 + 2*\k + 0.4*\y, \y) circle (1pt);
    }
    }

    \foreach \z in {0, ..., 5}{\filldraw (0,\z) circle (1pt);}
    \foreach \z in {0, ..., 5}{\filldraw (12,\z) circle (1pt);}

    \coordinate (P1) at (intersection of 0,0.1--4,0.1 and 4.9,0--6.9,5);
    \coordinate (P2) at (intersection of 0,4.9--12,4.9 and 0.9,0--2.9,5);
    \coordinate (P3) at (intersection of 0,3.1--4,3.1 and 0.9,0--2.9,5);
    \coordinate (P4) at (intersection of 0,1.1--4,1.1 and 2.9,0--4.9,5);
    \coordinate (P5) at (intersection of 0,4.1--4,4.1 and 2.9,0--4.9,5);
    \coordinate (P6) at (intersection of 0,3.1--4,3.1 and 4.9,0--6.9,5);

    \draw[red] (0,1.1)-- (P4) (P4)--(P5) (P5)--(12,4.1);
    \draw[red] (0,0.1)--(P1) (P1)--(P6) (P6)--(12,3.1);
    \draw[red] (0,3.1)-- (P3) (P3) -- (P2) (P2) -- (12,4.9);

    \foreach \p/\t in {(-0.2,0)/$1$, (12.2,0)/$1$, (-0.2,1)/$2$, (12.2,1)/$2$, (-0.5,4)/$n-1$, (12.5, 4)/$n-1$, (-0.2,5)/$n$, (12.2,5)/$n$, (-0.4,2.5)/$\vdots$, (12.4,2.5)/$\vdots$}{\node at \p {\t};}
\end{tikzpicture}%
   }
\caption{Routing from $I$ to $I_{\mathrm{top}}$ in $T^{-1}E$.}
\label{fig:route-up}
\end{subfigure}\hfill
\begin{subfigure}{.48\linewidth}
\centering
\includegraphics[width=\linewidth]{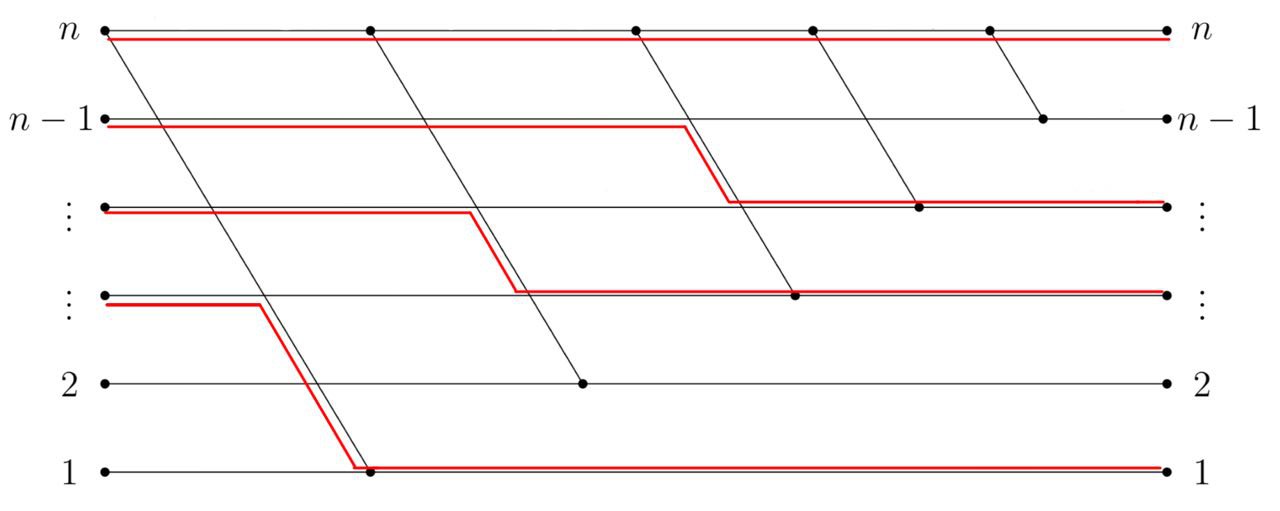}
\caption{Routing from $I_{\mathrm{top}}$ to $J$ in $TE$.}
\label{fig:route-down}
\end{subfigure}
\caption{The path families in Lemma \ref{lem:universal-routing}.}
\end{figure}

\section{Proof of the domination result} 

The proof of this follows the strategy in our preceding paper \cite{BGup}; however, in our former setting, when $S$ was a punctured surface, the ``maximal lamination" we were considering formed an ideal triangulation, and therefore had \textit{finitely} many leaves. As observed before, a general maximal lamination could have \textit{infinitely} many leaves, and to extend our previous argument we develop here a method of approximating monodromy by weight matrices of weighted planar networks,  using some of the technical machinery in \cite{MMMZ}. 

We shall first recall the proof of the domination result in \cite{BGup} for a finite lamination in \S3.1; this uses planar networks and the monodromy formula introduced in the previous section. Next, in \S3.2,  we shall describe the analogue of the monodromy formula in the closed surface case with ``generalized building blocks" that involve the slithering map. In \S3.3, we shall introduce the corresponding planar networks, and in \S3.4 we shall complete the proof of Theorem \ref{thm:main}.

\subsection{Punctured surface case} In this section assume that $S$ is a punctured surface, and $\rho:~\pi_1(S) \to \pslnc$ has a framing and Fock-Goncharov coordinates (possibly complex parameters) with respect to an ideal triangulation. 
We recall the argument  in \cite{BGup};  we have already reviewed, in the previous section, the decomposition of the monodromy matrix $\rho(\gamma)$ as a product of finitely many ``building-blocks" (see \eqref{decomp}), and the crucial observation that  each building-block was the weight matrix of a planar network (Lemma \ref{keylem}), with entries suitable monomials in the Fock-Goncharov coordinates. The  idea was to consider $\rho_0:\pi_1(S) \to \pslnr$ defined by new Fock-Goncharov coordinates obtained by replacing each with its modulus; since the coordinates are now all real and positive, $\rho_0$ is a positive representation (the analogue of a Hitchin representation for punctured-surface groups).

\medskip

\noindent We shall use the following notation:

\begin{defn}[Matrix domination] Let $A$ be a $n\times n$ matrix with real and positive entries and $B$ be an $n\times n$ matrix with complex entries. Then we say that $A$ \emph{dominates} $\lvert B \rvert$, denoted by $A \geq \lvert B\rvert$ (or $\lvert B \rvert \leq A$), if each entry of $A$ is  at least the modulus of the corresponding entry of $B$. 
\end{defn}

\noindent The key observation in \cite{BGup} was:

\begin{lem}[Propositions 3.6 and 3.10 of \cite{BGup}]\label{lem:dom} For any building block $B = T^{\pm 1}E$ in \eqref{decomp}, if one replaces each associated Fock-Goncharov parameter by its modulus, the resulting building block $B_0$ dominates $\lvert B \rvert$. 
\end{lem}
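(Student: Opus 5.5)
The plan is to pass to the weighted planar network $(\Gamma,\omega)$ attached to $B$ by Lemma \ref{keylem}, and to use the fact that each entry of a weight matrix is a polynomial with nonnegative integer coefficients in the edge weights. By definition, the $(i,j)$ entry of $\mathcal W(\Gamma,\omega)$ is
\[
\mathcal W(\Gamma,\omega)_{ij}=\sum_{p:\, i\to j}\ \prod_{\epsilon\in p}\omega(\epsilon),
\]
where the sum runs over directed paths from source $i$ to sink $j$. Since $\Gamma$ is finite and acyclic, this is a finite sum. Lemma \ref{keylem} tells us that every edge weight $\omega(\epsilon)$ has the form $m_\epsilon(x)$, where $m_\epsilon$ is a Laurent monomial with positive coefficient in the Fock--Goncharov coordinates $x=(x_1,\dots,x_N)$ (triangle and edge invariants). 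The point to emphasize is that the combinatorial graph $\Gamma$ and the monomials $m_\epsilon$ depend only on the type of building block ($T E$ or $T^{-1}E$) and on $n$, not on the values of the coordinates.

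Given this, the building block $B_0$ obtained by substituting $|x|=(|x_1|,\dots,|x_N|)$ is the weight matrix of the same network $\Gamma$ with weights $\omega_0(\epsilon)=m_\epsilon(|x|)$. For a Laurent monomial $m(x)=c\prod x_k^{a_k}$ with $c>0$ and $a_k\in\mathbb Z$, we have $|m(x)|=c\prod|x_k|^{a_k}=m(|x|)$. Because every coordinate is nonzero (they lie in $\mathbb C^*$), negative exponents cause no trouble. Applying the triangle inequality then gives
\[
|B_{ij}|\le\sum_{p}\prod_{\epsilon\in p}|\omega(\epsilon)|=\sum_p\prod_{\epsilon\in p}\omega_0(\epsilon)=(B_0)_{ij}.
\]
Each $(B_0)_{ij}$ is a sum of positive terms. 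It is therefore nonnegative, and strictly positive whenever there is a path from $i$ to $j$; this is the sense of "positive entries" needed in the definition of matrix domination. That yields $B_0\ge|B|$.

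The one step that needs care, and which I expect to be the main obstacle, is ensuring that the planar network realizes $B$ \emph{as an honest matrix}, with the same signs and normalization, rather than merely up to the projective ambiguity. Two sources of signs have to be checked.

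\emph{The side-reversal matrix $S$.} Its entries $\pm1$ must not appear as edge weights. In the $TE$ case this holds because $SE(e)$ is diagonal with monomial entries. In the $T^{-1}E$ case I would verify, as in the sketch of Lemma \ref{keylem}, that each conjugated step $S\,\mathrm{Step}(k)\,S$ has a network with positive monomial weights. The residual sign coming from $S^2=\pm I$ is a global scalar. It affects neither moduli nor the comparison, since it is the same for $B$ and $B_0$ and can be absorbed into the choice of lift.

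\emph{The determinant-one normalization.} Rescaling by an $n$-th root of $\det$ multiplies $B$ by a scalar $c$ and $B_0$ by $c_0$. Here $|c|=c_0$, because $\det B_0=|\det B|$ when $\det$ is itself a monomial in $x$, which holds for all the elementary factors $F_i$, $H_i$, $D(e)$.

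Once these are checked, the domination follows entrywise as above.
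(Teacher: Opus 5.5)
Your proposal is correct and is essentially the paper's argument: realize $B$ as the weight matrix of the planar network from Lemma \ref{keylem}, note that because the weights are positive Laurent monomials, replacing each coordinate by its modulus replaces each weight by its modulus, so $(\Gamma,\lvert\omega\rvert)$ has weight matrix $B_0$, and conclude entrywise by the triangle inequality over paths. Your extra bookkeeping is sound and only makes explicit what the paper leaves implicit in Lemma \ref{keylem}: the signs from $S$ contribute only a global factor that can be absorbed into the choice of lift, and the determinant-one rescaling is harmless because $\det S=1$ gives $\det B_0=\lvert\det B\rvert$.
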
 
\begin{proof} Recall from Lemma \ref{keylem} that $B$ is the weight matrix of a planar network $(\Gamma, \omega)$ where the weights $\omega$ are positive Laurent monomials in the Fock-Goncharov coordinates. The latter property ensures the fact that replacing each Fock-Goncharov coordinate with its modulus results in each weight in $\omega$ being replaced with its modulus. The weight matrix of the resulting planar network $(\Gamma, \lvert \omega\rvert)$ is $B_0$. Recall that the $ij$-th entry $b^\prime_{ij}$ of $B_0$ is the sum of the weights of paths from source-$i$ to sink-$j$; by the triangle inequality, we then have that $$b_{ij}^\prime = \lvert w_1 \rvert + \lvert w_2 \rvert + \cdots + \lvert w_k \rvert  \geq \lvert w_1 + w_2 +\ldots +w_k \rvert = \lvert b_{ij}\rvert $$  
where $w_1,w_2,\ldots,w_k$ are the (complex) weights of the paths from source-$i$ to sink-$j$ in $(\Gamma, \omega)$. 
\end{proof}

For a \textit{punctured} surface $S$, the domination result for the Hilbert length spectrum (Proposition \ref{prop:hilbg}) is then a consequence of the following basic linear algebra fact (see Lemma 2.27 of \cite{BGup} for an alternative approach):

\begin{lem}\label{lem:linalg} Let $A$ be a $n\times n$ matrix with real and positive entries and $B$ be an $n\times n$ matrix with complex entries, such that $A \geq \lvert B \rvert$. Then we have $\sigma(A) \geq \sigma(B)$ where $\sigma(M)$ denotes the spectral radius, the largest eigenvalue in modulus, of a matrix $M$. 
\end{lem}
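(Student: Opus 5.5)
The plan is to use the Gelfand formula together with the monotonicity of matrix norms under entrywise domination. The spectral radius satisfies $\sigma(M)=\lim_{k\to\infty}\|M^k\|^{1/k}$ for any matrix norm. So I will choose a norm that is monotone with respect to entrywise moduli, for instance the max-row-sum norm $\|M\|_\infty=\max_i\sum_j |m_{ij}|$, or the Frobenius norm. Any submultiplicative norm depending only on $|m_{ij}|$ and nondecreasing in each $|m_{ij}|$ will work.

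The first step is to show by induction that $A\geq |B|$ implies $A^k\geq |B^k|$ for every $k\geq 1$. The inductive step is the triangle inequality applied to each entry:
\[
|(B^{k+1})_{ij}|=\Big|\sum_l (B^k)_{il}B_{lj}\Big|\le \sum_l |(B^k)_{il}|\,|B_{lj}|\le \sum_l (A^k)_{il}A_{lj}=(A^{k+1})_{ij}.
\]
This uses that all entries of $A$, and hence of $A^k$, are nonnegative, so the inequalities can be multiplied and summed.

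The second step follows immediately. Monotonicity of the chosen norm gives $\|B^k\|_\infty\le \|A^k\|_\infty$ for every $k$. Taking $k$-th roots and letting $k\to\infty$ yields $\sigma(B)\le\sigma(A)$.

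If I prefer to avoid Gelfand's formula, an alternative is Perron–Frobenius. Since $A$ has positive entries, it has a positive eigenvector $v$ with $Av=\sigma(A)v$. Take an eigenvector $x$ of $B$ with $Bx=\mu x$ and $|\mu|=\sigma(B)$, and write $|x|$ for the entrywise modulus. Then $\sigma(B)|x|=|Bx|\le |B||x|\le A|x|$. Applying this repeatedly gives $\sigma(B)^k|x|\le A^k|x|$. Comparing $|x|\le cv$ for a suitable constant $c>0$ then gives $\sigma(B)^k|x|\le c\,\sigma(A)^k v$, and letting $k\to\infty$ forces $\sigma(B)\le\sigma(A)$.

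There is no real obstacle here. The only point needing care is that positivity of the entries of $A$ is what lets the entrywise inequalities survive multiplication. The first approach does not even need strict positivity; nonnegative entries suffice.
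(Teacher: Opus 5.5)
Your main argument is correct and is essentially the paper's proof: the entrywise domination $|B^k|\le A^k$ obtained from the triangle inequality, followed by monotonicity of the max-row-sum norm and Gelfand's formula. Your Perron--Frobenius alternative is also valid, but the paper does not need it.
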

\begin{proof} Since $A \geq \lvert B \rvert$ it follows that $A^k \geq  \lvert B\rvert^k \geq \lvert B^k \rvert$ for each $k\geq 0$. In particular, $\lVert A^k \rVert_\infty \geq \lVert B^k \rVert_\infty$ where $\lVert M \rVert_\infty$ denotes the maximum row-sum norm of a matrix $M$. Recall that Gelfand's formula for the spectral radius asserts that $\sigma(M) = \lim\limits_{k\to \infty} \lVert M^k \rVert_\infty^{1/k}$; applying this proves the inequality. 
\end{proof}

\begin{prop}[Proposition 3.11 of \cite{BGup}]\label{prop:hilbg} The representation $\rho_0$ dominates $\rho$ in the Hilbert length spectrum, that is, $\ell^H_{\rho_0}(\gamma) \geq  \ell^H_{\rho}(\gamma)$ for each $\gamma \in \pi_1(S)$.
\end{prop}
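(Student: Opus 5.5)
The plan is to express the Hilbert length through spectral radii of the holonomy and of its inverse, and to bound each by Lemma \ref{lem:linalg}. Fix $\gamma\in\pi_1(S)$ and a determinant-one lift $A$ of $\rho(\gamma)$ with eigenvalues $|\lambda_1|\le\cdots\le|\lambda_n|$. Then
\[
\ell^H_\rho(\gamma)=\log\sigma(A)+\log\sigma(A^{-1}),
\]
since $\sigma(A)=|\lambda_n|$ and $\sigma(A^{-1})=|\lambda_1|^{-1}$. It therefore suffices to show two inequalities for the corresponding lift $A_0$ of $\rho_0(\gamma)$:
\[
\sigma(A_0)\ge\sigma(A),\qquad \sigma(A_0^{-1})\ge\sigma(A^{-1}).
\]

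\textbf{Step 1: the first inequality.} Choose a representative loop in general position, so that by \eqref{decomp}
\[
A=B_1B_2\cdots B_k,
\]
a product of building blocks, possibly after rescaling by a scalar to obtain determinant one. By Lemma \ref{lem:dom}, each $B_i$ is dominated entrywise by $B_i^0$, the block obtained by taking moduli of all Fock--Goncharov parameters. Domination is preserved under products: if $P\ge|Q|$ and $P'\ge|Q'|$ entrywise, then $PP'\ge|Q||Q'|\ge|QQ'|$ by the triangle inequality. Hence $A_0':=B_1^0\cdots B_k^0\ge|A|$. Note that $A_0'$ is the monodromy of $\rho_0(\gamma)$ in the same snake bases, and its entries are positive.

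\textbf{Handling the normalization.} The scalar needs some care. We have $\det A_0'=|\det A'|$, where $A'$ is the unnormalized product: each block's determinant is a monomial in the parameters, and $S$ has determinant $\pm1$. So if $A=cA'$ with $c^n=1/\det A'$, we may take $A_0=|c|A_0'$. Then $A_0\ge|A|$, and Lemma \ref{lem:linalg} gives $\sigma(A_0)\ge\sigma(A)$.

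\textbf{Step 2: the inverse.} The main obstacle is $\sigma(A^{-1})$, because the inverse of a building block is not visibly a positive network matrix. I would handle this by passing to the inverse curve: $\rho(\gamma^{-1})=\rho(\gamma)^{-1}$, and $\gamma^{-1}$ is again a loop in general position. Applying \eqref{decomp} to $\gamma^{-1}$ writes $A^{-1}$, up to conjugation and a root of unity, as a product of building blocks $T^{\pm1}E$ for the reversed itinerary, with the turning signs $\delta_i$ exchanged and the edges reoriented. Lemma \ref{lem:dom} applies to these blocks as well.

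The point to verify is that the dominating product is exactly the monodromy of $\rho_0(\gamma^{-1})=\rho_0(\gamma)^{-1}$. This holds because replacing the coordinates by their moduli commutes with the reorientation rules relating the Fock--Goncharov parameters of an edge under its two orientations; these rules are monomial, for instance $z_i\mapsto z_{n-i}$. Given this, the same argument as in Step 1 yields $\sigma(A_0^{-1})\ge\sigma(A^{-1})$.

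\textbf{Conclusion.} Adding the logarithms of the two inequalities gives $\ell^H_{\rho_0}(\gamma)\ge\ell^H_\rho(\gamma)$. Spectral radii are conjugation-invariant and insensitive to roots of unity, so the choices of basis and of lift do not affect the argument.
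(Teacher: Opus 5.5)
Your proof is correct and follows the paper's argument: the normalized monodromy of $\rho(\gamma)$ is entrywise dominated by the modulus-substituted product, which is the monodromy of $\rho_0(\gamma)$. Lemma \ref{lem:linalg} then compares spectral radii, and the same argument applied to $\gamma^{-1}$ controls the smallest eigenvalue. Your extra care with the determinant normalization and with reorienting the Fock--Goncharov parameters for the reversed loop fills in details the paper leaves implicit; one small correction is that the entries of $A_0'$ are only guaranteed to be nonnegative, which is all the proof of Lemma \ref{lem:linalg} needs.
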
 

\begin{proof}
Fix $\gamma \in \pi_1(S)$. From the monodromy formula \eqref{decomp} and Lemma \ref{keylem}, we know that there is an $A \in \slnc$ that is a lift of $\rho(\gamma)$ and is the weight matrix of a planar network $(\Gamma, \omega)$ obtained by concatenating the planar networks of each building block. By the previous lemma, the corresponding planar network associated with  $A_0$ (the lift of $\rho_0(\gamma)$) is obtained by replacing each weight by its modulus, and each entry of $A_0$ is at least the modulus of the corresponding entry of $A$. 

Lemma \ref{lem:linalg} then implies that the largest eigenvalue in modulus of $\rho_0(\gamma)$ is at least that of $\rho(\gamma)$. Applying the same argument to the curve $\gamma^{-1}$, we also obtain that the smallest eigenvalue in modulus of $\rho_0(\gamma)$ is at most that of $\rho(\gamma)$.
Since the Hilbert length of $\gamma$ is the ratio of these largest and smallest eigenvalues, and the choice of $\gamma$ was arbitrary, we conclude that $\ell_{\rho_0}^H(\gamma) \geq l_\rho^H(\gamma)$ for all $\gamma \in \pi_1(S)$.
\end{proof}

For  domination in translation length spectrum, we begin with the following consequence of Lindstr\"{o}m's lemma (Lemma \ref{lem:lind}):

\begin{lem}\label{lem:mindom}
    Let $A, A_0 \in \slnc$ be the lifts of $ \rho(\gamma), \rho_0(\gamma)$ respectively that are the weight matrices of planar networks above. Then for any minor we have the inequality $\Delta_{I,J}(A_0) \geq \lvert \Delta_{I,J}(A) \rvert $.
\end{lem}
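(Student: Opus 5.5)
The plan is to apply Lindstr\"{o}m's lemma (Lemma \ref{lem:lind}) to both networks, whose underlying graph is the same, and then use the triangle inequality.

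First we fix the common graph. By the proof of Proposition \ref{prop:hilbg}, the lift $A$ of $\rho(\gamma)$ is the weight matrix of a planar network $(\Gamma,\omega)$. This network is obtained by concatenating the networks of the building blocks in \eqref{decomp}, as provided by Lemma \ref{keylem}. The lift $A_0$ of $\rho_0(\gamma)$ is the weight matrix of $(\Gamma,|\omega|)$, the same directed graph with every edge weight replaced by its modulus. This uses the fact that the weights are positive Laurent monomials in the Fock--Goncharov coordinates, so taking the modulus of each coordinate takes the modulus of each weight. Concatenation of planar networks is again a finite acyclic planar network with $n$ sources and $n$ sinks, so Lindstr\"{o}m's lemma applies to both $(\Gamma,\omega)$ and $(\Gamma,|\omega|)$.

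Next, fix $I,J$ with $|I|=|J|$. Let $\mathcal{F}_{I,J}$ be the set of vertex-disjoint families of paths in $\Gamma$ from the sources indexed by $I$ to the sinks indexed by $J$. This set depends only on the graph $\Gamma$, not on the weights. Lemma \ref{lem:lind} gives
\[
\Delta_{I,J}(A)=\sum_{F\in\mathcal{F}_{I,J}}\omega(F),\qquad \Delta_{I,J}(A_0)=\sum_{F\in\mathcal{F}_{I,J}}|\omega|(F).
\]
Here the weight of a family is the product of the edge weights over all edges in all its paths, so $|\omega|(F)=|\omega(F)|$. The triangle inequality then gives
\[
|\Delta_{I,J}(A)|\le\sum_F|\omega(F)|=\Delta_{I,J}(A_0).
\]
Each term on the right is nonnegative, so in particular $\Delta_{I,J}(A_0)\ge 0$.

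The only delicate point is normalization. The statement refers to the determinant-one lifts, while the networks produce particular matrices $A$ and $A_0$. If $\det$ of the network matrix is not $1$, we rescale $A$ by $c=\det(A)^{-1/n}$, and the same construction for $\rho_0$ rescales by $|c|$. This holds because $\det A_0=\Delta_{[n],[n]}(A_0)$, and the building-block determinants are products of Laurent monomials in the coordinates whose moduli are the corresponding monomials for $\rho_0$.

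Rescaling multiplies every $r\times r$ minor by $c^r$ on one side and $|c|^r$ on the other, so the inequality is preserved. The root-of-unity ambiguity in the choice of lift does not change moduli either. I expect this normalization bookkeeping to be the main step needing care; the Lindstr\"{o}m argument itself is immediate.
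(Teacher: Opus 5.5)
Your proof is correct and follows the paper's argument: apply Lindstr\"{o}m's lemma to $(\Gamma,\omega)$ and $(\Gamma,\lvert\omega\rvert)$, which have the same vertex-disjoint families, and then use the triangle inequality. Your extra normalization step, rescaling by $c$ and $\lvert c\rvert$ so that $r\times r$ minors pick up $c^r$ and $\lvert c\rvert^r$, is valid bookkeeping; the paper instead builds it into its convention that the determinant-one lifts are themselves the network weight matrices.
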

\begin{proof}
As in the proof of Proposition \ref{prop:hilbg}, let  $(\Gamma, \omega)$ be the weighted planar network with weight matrix $A$, such that   $(\Gamma, \lvert \omega \rvert)$ has weight-matrix $A_0$. By Lemma \ref{lem:lind} the minor $\Delta_{I,J}(A_0)$ is the sum of weights of a vertex-disjoint family of paths from sources indexed by $I$ to sinks indexed by $J$. By the triangle inequality applied to these weights, as in the proof of Lemma \ref{lem:dom}, the desired inequality follows. 
\end{proof}

The domination result then follows from standard results in the theory of majorization; the following argument is easier than the one in \cite{BGup}:

\begin{prop}[\S4.3 of \cite{BGup}]\label{prop:trlbg}
    The representation $\rho_0$ dominates $\rho$ in the translation length spectrum, that is, $\ell_{\rho_0}(\gamma) \geq  \ell_{\rho}(\gamma)$ for each $\gamma \in \pi_1(S)$.
\end{prop}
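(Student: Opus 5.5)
We will deduce the translation-length domination from the minor inequalities of Lemma \ref{lem:mindom} via exterior powers and a majorization argument. Fix $\gamma$, and let $A, A_0 \in \slnc$ be the lifts of $\rho(\gamma),\rho_0(\gamma)$ given by the planar networks. Order the eigenvalue moduli as $|\lambda_1|\le\cdots\le|\lambda_n|$ for $A$ and $\mu_1\le\cdots\le\mu_n$ for $A_0$; since $\rho_0$ is positive/Hitchin, the $\mu_i$ are real and positive. Put $x_i=\log|\lambda_{n+1-i}|$ and $y_i=\log\mu_{n+1-i}$, both sorted decreasingly. Because both matrices have determinant one, $\sum x_i=\sum y_i=0$.

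\textbf{Step 1 (partial sums).} We will show that $\sum_{i\le k} x_i\le \sum_{i\le k} y_i$ for each $k$. The quantity $\prod_{i\le k}|\lambda_{n+1-i}|$ is the spectral radius of the $k$-th exterior power $\Lambda^k A$, whose entries in the basis $e_I$ are the $k\times k$ minors $\Delta_{I,J}(A)$. Lemma \ref{lem:mindom} gives $\Lambda^k A_0\ge|\Lambda^k A|$ entrywise, with $\Lambda^k A_0$ having nonnegative entries. Lemma \ref{lem:linalg} uses only nonnegativity in its Gelfand-formula argument, so it applies here and gives $\sigma(\Lambda^k A)\le \sigma(\Lambda^k A_0)=\prod_{i\le k}\mu_{n+1-i}$. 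Taking logarithms yields the claim.

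\textbf{Step 2 (majorization).} Step 1, together with the equality of total sums, says that $x$ is majorized by $y$. By the Hardy--Littlewood--Pólya (or Karamata) inequality, every convex function $\phi$ satisfies $\sum\phi(x_i)\le\sum\phi(y_i)$. Taking $\phi(t)=t^2$ gives $\sum(\log|\lambda_i|)^2\le\sum(\log\mu_i)^2$, which is $\ell_\rho(\gamma)\le\ell_{\rho_0}(\gamma)$ by Definition \ref{defn:lengths}.

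\textbf{Main obstacle.} The nontrivial point is Step 1: the minors of $A_0$ must dominate those of $A$ for all index sets simultaneously, and the argument must be applied to $\Lambda^kA$ rather than to $A$ itself. This is exactly what Lindström's lemma supplies through Lemma \ref{lem:mindom}. Once Step 1 is in place, the convexity step is standard.
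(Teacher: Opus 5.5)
Your proposal is correct and follows essentially the same route as the paper: entrywise domination of the exterior powers $\bigwedge^k A_0 \geq \lvert \bigwedge^k A\rvert$ from the minor inequality of Lemma \ref{lem:mindom}, then the spectral-radius comparison of Lemma \ref{lem:linalg} to obtain the majorization partial-sum inequalities (with equality at $k=n$ from the determinant-one normalization), and finally Karamata's inequality for the convex function $s\mapsto s^2$. Your remark that Lemma \ref{lem:linalg} only requires the dominating matrix to have nonnegative entries is a small but useful precision, since the minors of $A_0$ are a priori only known to be nonnegative.
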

\begin{proof}
Let $\gamma\in \pi_1(S)$ and as before let $A_0,A\in \slnc$ be the determinant-one lifts of $\rho_0(\gamma), \rho(\gamma)$ respectively. 
From  Lemma \ref{lem:mindom} it follows that for each $1\leq k \leq n$, we have $\bigwedge^k A_0 \geq  \lvert \bigwedge^k A \rvert $ since the entries of the $k$-th exterior power are exactly the $k\times k$ minors.  By Lemma \ref{lem:linalg} it follows that $$\sigma(\bigwedge^k A_0) = \prod\limits_{i=0}^{k-1} \lvert \lambda_{n-i}^\prime \rvert \geq  \prod\limits_{i=0}^{k-1} \lvert \lambda_{n-i} \rvert  =   \sigma(\bigwedge^k A)$$ where the eigenvalues of $A$ (respectively $A_0$) are $\lvert \lambda_1\rvert \leq \lvert \lambda_2\rvert \leq  \cdots \leq \lvert \lambda_n\rvert$ (respectively $\lvert \lambda_1^\prime\rvert \leq \lvert \lambda_2^\prime\rvert \leq  \cdots \leq \lvert \lambda_n^\prime \rvert $).  

Taking logarithms of both sides, we obtain $$\sum\limits_{i=0}^{k-1}  \log\lvert \lambda_{n-i}^\prime \rvert \geq \sum\limits_{i=0}^{k-1}  \log\lvert \lambda_{n-i} \rvert $$ for each $k\in \{1,2,\ldots, n\}$, and  by the determinant-one condition both sides are equal to zero for $k=n$.  In other words, we have the majorization inequality between the $n$-tuples  $$(\log \lvert \lambda_1\rvert, \log \lvert \lambda_2, \rvert, \ldots, \log \lvert \lambda_n\rvert) \precsim (\log \lvert \lambda_1^\prime\rvert, \log \lvert \lambda_2^\prime, \rvert, \ldots, \log \lvert \lambda_n^\prime\rvert).$$
Applying Karamata's inequality (see, for example, \cite[Proposition 3.C.1]{MOA}) to this, for the convex function $s\mapsto s^2$, we obtain $$\sum\limits_{i=1}^n (\log\lvert \lambda_i \rvert)^2 \leq \sum\limits_{i=1}^n (\log\lvert \lambda_i^\prime \rvert)^2$$ which yields the desired inequality of translation lengths (see Definition \ref{defn:lengths} (2)).
\end{proof}

For the case of the closed surface $S$, the same strategy for proving a domination result is viable when one has an analogue of Lemma \ref{lem:dom}; this shall be done in the rest of this section.

\subsection{Slithering map and generalized building blocks} We shall  need an analogue of Lemma \ref{lem:dom} in the case of a closed surface $S$ and a general maximal lamination, when the monodromy of curve (that typically crosses infinitely many plaques) would be an \textit{infinite} product of building blocks; to make sense of this, we first review the work in \cite{MMMZ}, in particular the construction of the ``slithering map"  (see \cite[Definition 4.11]{MMMZ} for a more detailed definition).

\begin{defn}[Slithering map]\label{defn:slith} For a closed surface $S$ and a maximal lamination $\lambda$, let $\rho:\pi_1(S) \to \pslnc$ be an $n$-pleated representation (as in Definition \ref{defn:pleat}), pleated along $\lambda$, with an associated $\rho$-equivariant limit map $\xi$ that assigns flags in $\mathbb{C}^n$ with the endpoints of leaves of $\tilde{\lambda}$. The \textit{slithering map} compatible with $\xi$  $$\Sigma:\tilde{\Lambda}^2 \to \slnc$$ defined on the space of pairs of leaves of $\lambda$, such that $\Sigma(g_1,g_2)$ is a matrix in $\slnc$ that takes the transverse pair of flags associated with the endpoints of a leaf $g_2$ to that of a leaf $g_1$ (oriented in parallel), satisfying
\[
\Sigma(g,g)=\mathrm{id}, \qquad
\Sigma(g_1,g_3)=\Sigma(g_1,g_2)\Sigma(g_2,g_3),
\]
whenever $g_2$ separates $g_1$ from $g_3$. When $g_1$ and $g_2$ share an endpoint, $\Sigma(g_1,g_2)$ is the unique unipotent transformation that fixes the flag at the common endpoint and takes the flag at the other endpoint of $g_2$ to the flag at the other endpoint of $g_1$.
\end{defn} 

The slithering map already appears in Bonahon's work for $n=2$, where it admits a geometric interpretation in terms of the horocyclic foliation transverse to $\lambda$.  The main technical difficulty in its construction is to show the convergence of an \textit{infinite} product of matrices associated to the plaques of $\widetilde{\lambda}$. This was also handled in \cite{BonShear} (for $\pslc$) as well as in \cite{BonDrey} (for $\pslnr$, see also \cite{SWZ} for the case of finite laminations). Indeed, if two leaves $g_1,g_2$ of $\widetilde{\lambda}$ are separated by infinitely many plaques,
the slithering map between them must be defined as such an infinite product.  To formalize this, Maloni-Martone-Mazzoli-Zhang defined (see Definition 5.1 of \cite{MMMZ}):

\begin{defn}[H\"{o}lder-extendable map]\label{defn:hext}
Let $\mathcal{P}(g_1,g_2)$ be the (possibly infinite) collection of plaques separating $g_1$ from $g_2$, with the linear order induced by the transverse orientation. A map $$M:\mathcal{P}(g_1,g_2) \to \slnc$$
is said to be \textit{H\"{o}lder-extendable} if there exist constants $A,\nu>0$ such that 
\begin{equation*}
    \lVert M(T) - \text{I}\rVert \leq A d_\infty(x_{T,1}, x_{T,3})^\nu
\end{equation*}
where $(x_{,1}, x_{T,2},x_{T,3})$ are the ordered set of vertices of $T$ between $g_1$ and $g_2$ and $d_\infty$ is a metric on the ideal boundary of $\tilde{S}$ that is the standard metric on the circle. 
\end{defn}

By \cite[Proposition 5.2]{MMMZ}, the ordered product $\prod\limits_{P\in \mathcal{F}_n} {M}(P)$  over any finite exhaustion $\mathcal{F}_n \nearrow   \mathcal{P}(g_1,g_2)$  then converges in $\slnc$, and is independent of the exhaustion.

One way to construct the slithering map is to consider the H\"older-extendable map $M$ for which $M(P)$ is the unipotent transformation associated with a plaque $P$ separating the two leaves $g_1,g_2$, that  fixes the flag at the ``middle" vertex of the ideal triangle $P$, and takes the flag at the endpoint of the side closer to $g_2$, to the flag at the remaining vertex (see \cite[Section~6]{MMMZ}). The ordered product over any increasing finite exhaustion converges by \cite[Proposition~5.2]{MMMZ} as noted above, and we have
\begin{equation}\label{decomp-limit} 
\Sigma_\rho(g_1,g_2) = {\prod\limits_{P\in\mathcal{P}(g_1,g_2)}} \overrightarrow{M}(P)
\end{equation} 
that defines the slithering map (see \cite[Theorem 4.12]{MMMZ}). 

\medskip

We shall now provide an analogue of the monodromy decomposition \eqref{decomp} in this setting.  In what follows, recall from Theorem \ref{bon-gen} that the $n$-pleated representation $\rho$ is identified with a cocycle $(\alpha_\rho,\theta_\rho) \in \Y(\lambda,n;\mathbb{C}/2\pi i\mathbb{Z})$. The triangle invariants and edge invariants are then exponentiated versions of $\alpha_\rho$ and $\theta_\rho$: namely,  let
\begin{equation}\label{eq:anbn} 
    \cA=\{(a,b)\in\Z_{>0}^2:a+b=n\} \text{ and }\cB=\{(a,b,c)\in\Z_{>0}^3:a+b+c=n\}.
\end{equation}
Then for a pair of distinct plaques $P,Q$ and $i\in\cA$, and for a labeled plaque $x$ and $j\in\cB$, we have 
\begin{equation}\label{eq:exp-coord}
    e_{i}(P,Q)=\exp\bigl(\alpha_\rho^{i}(P,Q)\bigr) \text{  and  } t_{j}(x)=\exp\bigl(\theta_\rho^{j}(x)\bigr).
\end{equation}
We shall denote these latter collections of coordinates by $e_\rho(P,Q)$ and $t_\rho(x)$ respectively.  These are the analogues of the triangle invariants and edge-invariants mentioned in \S2.2; indeed, the $t_\rho(x)$ are exactly the triangle-invariants of a generic triple of flags. We shall call  $e_\rho(P,Q)$ the \textit{shear-parameters} (between the plaques $P$ and $Q$), since their logarithms $\alpha_\rho(P,Q)$ are defined  in the following way: if  $g_1$ and $g_2$ are the separating sides (in the sense of \cite[\S 2.1.1]{MMMZ}) of $P$ and $Q$ respectively, then $\alpha_\rho(P,Q)$ are the double ratios of the quadruple of flags $$(\xi(y_2), \xi(y_1), \xi(y_3), \Sigma_\rho(g_1,g_2) \cdot \xi(z))$$
where $y_1, y_2$ are the endpoints of $g_1$, $y_3$ is the other vertex of $P$, and $z$ is the vertex of $Q$ opposite to $g_2$ (see \cite[Section 4.1.5]{MMMZ}, and Figure \ref{fig:shear}).

\begin{figure}
\begin{center} 
\includegraphics[scale=.5]{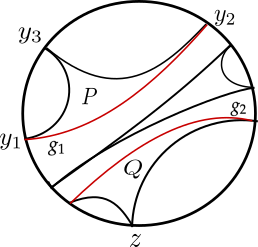}
\caption{The separating sides of the plaques $P$ and $Q$ are $g_1$ and $g_2$ respectively (shown in red), oriented parallel to each other. The slithering map $\Sigma_\rho(g_1,g_2)$ sends the flags at the endpoints of $g_2$ to those at the endpoints of $g_1$.}
\label{fig:shear}
\end{center}
\end{figure}

\medskip

The triangle matrix $T(t_\rho(P))$ as in \eqref{eq:Tmat} is still a change of basis matrix between the sides of  a single plaque $P$ and depends only on its triangle parameters.  Here we choose a normalization of the projective snake basis as in \cite[Section~7.4]{MMMZ}, namely the \textit{canonical snake basis} associated with the side $g_P$ of $P$ is 
\[
 \mathscr B_\rho(P,g_P)=(b_1,\ldots,b_n)
\]
where 
\[
 b_i\in \xi(y_2)^i\cap\xi(y_1)^{n-i+1},
 \qquad
 \xi(y_3)^1=\operatorname{Span}\bigl(b_1+\cdots+b_n\bigr),
\]
where $(y_1,y_2,y_3)$ are the vertices of  $P$ and $g_P$ has endpoints $y_1,y_2$; the resulting triangle matrix has determinant one. 

As in \S2.4, an edge matrix $E(e_\rho(P,Q))$ can then be thought of as the change of canonical snake basis for the side $g_1$ of $P$ with endpoints corresponding to the pair of flags $(\xi(y_1), \xi(y_2))$, to the same pair considered as arising from the triple $(\xi(y_1), \xi(y_2), \Sigma_\rho(g_1,g_2) \cdot \xi(z))$ obtained by applying the slithering map $\Sigma_\rho(g_1,g_2) $ to the triple of flags associated with $Q$. As for the triangle matrix, this edge-matrix is normalized so that it has determinant one.  Note that when $P$ and $Q$ are adjacent, their separating sides coincide, and $\Sigma_\rho(g,g)=I$, recovering the edge matrix for finite laminations as described in \S2.4. 
\medskip

\noindent The above discussion leads to the following:

\begin{lem}[Generalized building block]\label{lem:genbb}
Let $P,Q$ be distinct plaques, with separating sides $g_P\subset\partial P$ and $g_Q\subset\partial Q$ oriented parallel.  The change of canonical snake basis of $g_P$ to that of a side $g_Q^\prime$ of $Q$ is given by a matrix 
\begin{equation}\label{eq:gen-bb}
 T\bigl(t_\rho(Q)\bigr)^{\delta}
 \;\Sigma_\rho(g_Q,g_P)\;
 E\bigl(e_\rho(P,Q)\bigr),
 \qquad \delta\in\{+1,-1\},
\end{equation}
where $\Sigma_\rho(g_Q,g_P)$ is a slithering map.
\end{lem}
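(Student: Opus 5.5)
We prove this by factoring the change of basis through intermediate snake bases, exactly as in the monodromy formula \eqref{decomp}. We pass from the canonical snake basis $\mathscr B_\rho(P,g_P)$ to $\mathscr B_\rho(Q,g_Q')$ in three stages.

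\emph{Stage one.} First we move from $\mathscr B_\rho(P,g_P)$ to the canonical snake basis of $g_P$ viewed as a side of the slithered triangle with flags $(\xi(y_1),\xi(y_2),\Sigma_\rho(g_P,g_Q)\cdot\xi(z))$, where $z$ is the vertex of $Q$ opposite $g_Q$. By the discussion preceding the lemma this change of basis is $E(e_\rho(P,Q))$. The justification is as follows. The quadruple of flags $(\xi(y_2),\xi(y_1),\xi(y_3),\Sigma_\rho(g_P,g_Q)\xi(z))$ has double ratios $\alpha_\rho(P,Q)$. Both bases are adapted to the same transverse pair $(\xi(y_1),\xi(y_2))$, with $b_i$ spanning $\xi(y_2)^i\cap\xi(y_1)^{n-i+1}$, so they differ by a diagonal matrix composed with the side reversal $S$. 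The diagonal entries are fixed by the normalizations $b_1+\dots+b_n\in \xi(y_3)^1$, respectively $\in\Sigma_\rho(g_P,g_Q)\xi(z)^1$. These ratios are exactly the double ratios, and this gives $D(e)S$ as in \eqref{eq:Ediag}, using the same computation as for finite laminations (the case $\Sigma=I$).

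\emph{Stage two.} Next we use that $\Sigma_\rho(g_Q,g_P)$ (equivalently its inverse $\Sigma_\rho(g_P,g_Q)$) maps the flags at the endpoints of $g_Q$ to those of $g_P$, respecting the parallel orientation. It therefore carries the triple of flags of $Q$ to the slithered triple. Since canonical snake bases are defined equivariantly from flags, $\Sigma_\rho(g_P,g_Q)$ maps $\mathscr B_\rho(Q,g_Q)$ to the slithered basis, up to a scalar. Consequently, in coordinates the change of basis is the matrix of $\Sigma$ written in these bases. The point here is to check that this contributes precisely the factor $\Sigma_\rho(g_Q,g_P)$ in the position shown in \eqref{eq:gen-bb}, once the bases are identified with the standard basis at $g_P$. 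Equivariance of the snake construction makes the conjugating matrices cancel.

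\emph{Stage three.} Finally, inside the single plaque $Q$ we pass from the snake basis of $g_Q$ to that of the side $g_Q'$. This is the triangle matrix $T(t_\rho(Q))^{\delta}$ from \eqref{eq:Tmat}, with $\delta=\pm1$ according to whether $g_Q'$ is the next or the previous side in the cyclic order (the left/right turn). It depends only on the triangle invariants $t_\rho(Q)$.

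Composing the three stages, and reading the order of matrix multiplication as in \eqref{decomp}, gives \eqref{eq:gen-bb}. The determinant-one normalization holds because each factor is normalized to lie in $\slnc$.

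The main obstacle is the bookkeeping in stage two. We must ensure that the slithering factor appears as $\Sigma_\rho(g_Q,g_P)$ sandwiched between $T$ and $E$, rather than conjugated, and that scalar ambiguities in the projective bases are absorbed by the determinant-one normalization. I would first verify this against the adjacent case, where $\Sigma=I$ and the formula reduces to $T^{\pm1}E$. I would then appeal to equivariance and the cocycle identity of Definition \ref{defn:slith}, following \cite[Section~7.4]{MMMZ}.
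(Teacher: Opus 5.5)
Your proposal is correct and follows the paper's proof essentially step for step: the paper also writes the change of basis as $E\bigl(e_\rho(P,Q)\bigr)$ taking $\mathscr B_\rho(P,g_P)$ to the slithered basis $\Sigma_\rho(g_P,g_Q)\cdot\mathscr B_\rho(Q,g_Q)$, then $\Sigma_\rho(g_Q,g_P)=\Sigma_\rho(g_P,g_Q)^{-1}$ returning to $\mathscr B_\rho(Q,g_Q)$, then $T\bigl(t_\rho(Q)\bigr)^{\delta}$ inside $Q$, and it handles your stage-two bookkeeping with the same one-line appeal to this composition. Your extra justification of stage one has one small slip: the factor $S$ in $E=D(e)S$ arises because the slithered triangle lies on the other side of $g_P$, so its snake basis is indexed with the roles of $y_1,y_2$ swapped (if both bases had $b_i\in\xi(y_2)^i\cap\xi(y_1)^{n-i+1}$, they would differ by a diagonal matrix alone).
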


\begin{proof}
By Definition \ref{defn:slith} the slithering map   $\Sigma_\rho(g_P,g_Q)$  (note the order of the arguments) sends the endpoint flags of $g_Q$ to those of $g_P$. From the preceding discussion $E(e_\rho(P,Q)$ takes the canonical snake basis $\mathscr B_\rho(P, g_P)$ to $\Sigma_\rho(g_P,g_Q) \cdot B_\rho(Q, g_Q)$. Postcomposing with $\Sigma_\rho(g_Q,g_P)$ that is the inverse of  $\Sigma_\rho(g_P,g_Q)$, results in the basis $B_\rho(Q, g_Q)$. Finally, postcomposing with $ T\bigl(t_\rho(Q)\bigr)^{\delta}$ changes the basis to the desired canonical basis $B_\rho(Q, g_Q^\prime)$.
\end{proof}

We now explain how these generalized building blocks provide an analogue of \eqref{decomp}. The idea is that given a nontrivial  $\gamma\in\pi_1(S)$ such that a lift $\widetilde c_\gamma\subset\widetilde S$ cuts across several plaques (in the complement of $\widetilde \lambda$), we can choose a \textit{coarse itinerary}, which is a finite (ordered) sequence of such plaques $$P_0,P_1,\ldots,P_m$$
where we assume that the interior of $P_0$ intersects $\widetilde c_\gamma$ and $P_m = \gamma \cdot P_0$. 

For each $1\leq i\leq m$, let \[ g_{i-1}^{+}\subset\partial P_{i-1}, \qquad g_i^{-}\subset\partial P_i \] be the two separating sides, oriented such that they are parallel, of the ordered pair $(P_{i-1},P_i)$.  Here we assume that $g_m^{+}:=\gamma g_0^{+}$. 
For each $i$, we also let $\delta_i\in\{+1,-1\}$ record the turn inside $P_i$ from $g_i^{-}$ to $g_i^{+}$, and  define the generalized building block
\[ \mathcal B_i:= T\bigl(t_\rho(P_i)\bigr)^{\delta_i} \Sigma_\rho(g_i^{-},g_{i-1}^{+}) E\bigl(e_\rho(P_{i-1},P_i)\bigr). \] 
that transforms the canonical snake basis  $\mathscr B_\rho(P_{i-1},g_{i-1}^{+})$ to $\mathscr B_\rho(P_{i},g_{i}^{+})$ by Lemma \ref{lem:genbb}. 

Applying this iteratively,  the change of canonical snake basis $\mathscr B_\rho(P_0,g_0^{+})$ to $\mathscr B_\rho(P_m,g_m^{+})$  is then given by the product 
\begin{equation}\label{eq:coarse-monodromy} 
A := \mathcal B_m\mathcal B_{m-1}\cdots\mathcal B_1 
\end{equation} 
which therefore represents a lift of  $\rho(\gamma)$ that has determinant one (since by our normalization all the factors have determinant one.) 

\medskip

Note that the coarse itinerary  $P_0,\ldots,P_m$ above need not be the entire set of plaques intersecting the segment $\widetilde c_\gamma$ that lifts $\gamma$ -- indeed, there could be \textit{infinitely many} such plaques it crosses between $P_{i-1}$ and $P_i$. The passage through all such intermediate plaques are encoded in the slithering matrix $ \Sigma_\rho(g_i^{-},g_{i-1}^{+})$, which by \eqref{decomp-limit}, is an infinite ordered product of unipotent factors. Consequently, \eqref{eq:coarse-monodromy} has the same finite $T^{\pm1}E$ skeleton as the usual monodromy word for a finite lamination, except that a slithering factor is inserted between the $T$ and $E$-matrices. In the next subsection, we shall replace each of the finitely many slithering factors by a finite approximant to obtain a finite approximation of the monodromy matrix.

\subsection{Transparent networks and approximants}
We shall now prove the properties that permit the punctured-surface argument to survive the insertion of a slithering matrix in the generalized building block \eqref{eq:gen-bb}.  The following property shall be useful in the proof of \textit{strict} domination in a later section. 

\begin{defn}[Transparent network]\label{def:transparent}
A weighted planar network of order $n$ is said to be \textit{transparent} if it contains 
a family of $n$ distinguished paths $h_r$ from  source $r$ to sink $r$ for each $1\leq r\leq n$, that are pairwise vertex-disjoint, and each of weight $1$.  
\end{defn}

\noindent We record a few immediate properties that will be useful later.

\begin{lem}\label{lem:transprop}
Let $\mathcal N$ be a transparent (weighted planar) network of order $n$, with weight
matrix $A$.
\begin{enumerate}
\item If a path in another network reaches sink-$r$ and if the network is concatenated with $\mathcal N$, then it can be continued
through the path $h_r$ without changing its weight. More generally, a vertex-disjoint family reaching pairwise distinct
levels $I\subset\{1,\ldots,n\}$ can be continued through the paths
$\{h_r:r\in I\}$, without changing either its weight or its
vertex-disjointness.

\item If $\mathcal N'$ is another transparent network, then the
concatenation $\mathcal N\mathcal N'$ is transparent.

\item Let $D=\emph{diag}(d_1,\ldots,d_n)$ be an invertible diagonal matrix.  There is a transparent network with
weight matrix $DAD^{-1}$.  

\item There is a transparent network with weight matrix $SAS^{-1}$ where  $S$ is the side-reversal matrix (see \eqref{eq:S}). 
\end{enumerate}
\end{lem}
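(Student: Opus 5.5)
Parts (1) and (2) follow directly from the definitions, so I plan to do them first and reuse them for (3) and (4). For (1), concatenate at the common boundary: sink $r$ of the first network is identified with source $r$ of $\mathcal N$. A path ending at sink $r$ then continues along $h_r$, and its weight is multiplied by $\wt(h_r)=1$. For a vertex-disjoint family ending at distinct levels $I$, the continuations $h_r$, $r\in I$, are pairwise vertex-disjoint by transparency. They also meet the original family only at the identified boundary vertices, and each of those lies on exactly one path. So the extended family is still vertex-disjoint and has the same weight. Part (2) is then immediate: the concatenations $h_r h'_r$ give $n$ weight-one paths from source $r$ to sink $r$, and they are pairwise disjoint because the $h_r$ are disjoint, the $h'_r$ are disjoint, and two different ones meet only at the glued level, where they sit at different indices.

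For (3), I will reweight the network rather than change the graph. Writing $A=\mathcal W(\Gamma,\omega)$, conjugation gives $(DAD^{-1})_{ij}=d_i a_{ij}d_j^{-1}$. First, multiply the weight of each edge leaving source $i$ by $d_i$ and the weight of each edge entering sink $j$ by $d_j^{-1}$. To keep things well defined, I first subdivide every source and sink edge by adding a new vertex, so each boundary vertex has exactly one incident edge. This does not affect planarity or acyclicity. After the reweighting, every path from $i$ to $j$ has its weight multiplied by $d_id_j^{-1}$. The obstacle is that $h_r$ now has weight $d_rd_r^{-1}=1$ only if its first and last edges are distinct, which holds after subdivision. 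So the weight matrix is $DAD^{-1}$, and the paths $h_r$ still have weight $1$ and are still disjoint. Note that the new weights $d_i,d_j^{-1}$ are allowed to be arbitrary complex numbers.

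For (4), the key point is that $S$ equals $PD_0$, where $P$ is the order-reversing permutation $r\mapsto n+1-r$ and $D_0$ is diagonal with entries $\pm1$. Hence $SAS^{-1}=P(D_0AD_0^{-1})P^{-1}$. By (3), the middle factor has a transparent network $\mathcal N'$. Conjugating by $P$ sends the $(i,j)$ entry to the $(n+1-i,n+1-j)$ entry, and this is realized geometrically by reflecting $\mathcal N'$ top-to-bottom in the plane. The reflection keeps the graph planar and acyclic, keeps the left and right sides fixed, and relabels level $r$ as $n+1-r$. Under the reflection, $h_r$ becomes a weight-one path from source $n+1-r$ to sink $n+1-r$, and these paths stay pairwise disjoint, so the reflected network is transparent. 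The main thing to check is the sign convention of $S$, namely that it really is a signed anti-diagonal so that $S^{-1}=D_0^{-1}P^{-1}$. This is clear from \eqref{eq:S}.
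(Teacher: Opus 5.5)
Your proposal is correct and matches the paper's proof up to cosmetic choices. For (3), the paper appends a fresh edge of weight $d_r$ before source $r$ and one of weight $d_r^{-1}$ after sink $r$ instead of subdividing and reweighting (subdividing does not actually leave each boundary vertex with a single incident edge, but your argument never needs that), and for (4) it reflects first and then applies (3) using $S=D_\varepsilon J$, which is your factorization $S=PD_0$ used in the opposite order; the paper leaves (1) and (2) to the reader, and your arguments for them are fine.
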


\begin{proof}
We leave the proofs of parts (1) and (2) to the reader.

For part (3),  for each $1\leq r \leq n$,  insert a new edge adjacent to  source-$r$ of weight $d_r$, and insert a new edge adjacent to sink-$r$ of weight $d_r^{-1}$, leaving all other edge weights unchanged.
The distinguished path from level $r$ to level $r$ is extended by
these two boundary edges and still has weight $1$. However, every path from source $i$ to sink $j$ has its weight multiplied by $d_i d_j^{-1}$.  Hence the new weight matrix is $DAD^{-1}$.  When $i=j$, the two factors cancel, so every distinguished channel still has weight $1$.

For part (4), let $\overline{\mathcal N}$ be the weighted planar network obtained by reflecting  $\mathcal N$ across a
horizontal line, while retaining the direction and weight of every edge. A path in $\overline{\mathcal N}$ from source-$i$ to sink-$j$ corresponds to a path in $\mathcal N$ from source-$(n+1-i)$ to sink-$(n+1-j)$. Note that $\overline{\mathcal N}$ is transparent, since the reflections of the distinguished paths of $\mathcal{N}$ constitute the desired path family in Definition \ref{def:transparent}. 
If $\overline{A}$ is the weight matrix of
$\overline{\mathcal N}$ , then its $(i,j)$-th entry is the $(n+1-i, n+1-j)$-th entry of $A$, that is, 
\begin{equation*}
    \overline{A} = JAJ
\end{equation*}
where  $J$ is the anti-diagonal matrix with $(i,n+1-i)$-th entry $1$, and the rest $0$.

Note that the edge-reversal matrix defined in \eqref{eq:S} satisfies 
\begin{equation}\label{eq:SJD}
    S=D_\varepsilon J \text{ where }D_\varepsilon:=\text{diag}\bigl(1,-1,1,-1,\ldots,(-1)^{n-1}\bigr).
\end{equation}
Using the fact that  $J=J^{-1}$ we obtain 
\begin{equation*}
  SAS^{-1} = D_\varepsilon(JAJ)D_\varepsilon^{-1}
\end{equation*}
and the desired weighted planar network is obtained by applying part (3) of this lemma to the reflected network $\overline{\mathcal N}$. 
\end{proof}

In what follows we shall use the work in  \cite[Section 7.4]{MMMZ} that gives an alternative  H\"{o}lder extendable map used in the definition of the slithering map (see \eqref{decomp-limit}).  In their notation, if $h$ is a leaf of the lamination and $g_x^h$ is the boundary of a plaque $x$, then there is a  H\"{o}lder-extendable map 
\begin{equation}\label{eq:Lfactor}
L_{\rho}:\mathcal{P}(g_x^h,h) \to \slnc
\end{equation}
whose extension also gives the slithering map, that is, $\Sigma_\rho(g_1,g_2) = {\prod\limits_{P\in\mathcal{P}(g_1,g_2)}} \overrightarrow{L_\rho}(P)$ -- see Figure \ref{fig:Lmap} and \cite[Lemma 7.11(2)]{MMMZ}. Each $L_\rho(P)$ is a unipotent map, that depends on the shear parameters $e_\rho(x,P)$ in addition to the triangle parameters $t_\rho(P)$, with the advantage that it has an explicit expression in the canonical snake basis $\mathcal{B}_\rho(x,g_x^h)$ that does not depend on $P$ (see \cite[Lemma 7.11]{MMMZ}).  We begin by relating $L_\rho(P)$ to the  matrix $M(t)$ in \eqref{eq:Tmat} involved in the change-of-basis matrix from the canonical snake basis associated with one side of $P$ to another, that depends only on the triangle parameters $t_\rho(P)$. 

\begin{figure}
\begin{center} 
\includegraphics[scale=.5]{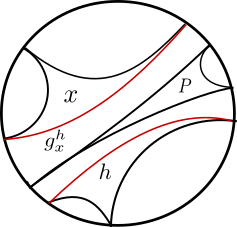}
\caption{The map $L_\rho(P)$ defined in \cite[Section 7.4]{MMMZ} fixes the flag at one of the endpoints of $g_x^h$ and takes the other flag to a flag $H_P = \Sigma_\rho(g_x^h, h_P^-) \cdot \xi(y)$ where $y$ is the vertex of $P$ opposite the separating side.}
\label{fig:Lmap}
\end{center}
\end{figure} 

\begin{lem}\label{lem:Lnetw}
The matrix $M(t)$ in \eqref{eq:Tmat} has a decomposition \begin{equation} \label{eq:MXD}
 M(t)
 =
 X_{i_1}(t)\cdot X_{i_2}(t)
 \cdots
 X_{i_N}(t) \cdot 
 D(t) 
\end{equation}
where $D(t)$ is a diagonal matrix with each entry a positive Laurent monomial in the $t$-parameters, and each $X_i(t)$ is the triangular matrix $I + m_iE_{i+1,i}$ where $m_i$ is a positive Laurent monomial in the triangle $t$-parameters.

Moreover, the unipotent matrix $L_\rho(P)$ for \eqref{eq:Lfactor} is obtained by conjugating $M(t)\cdot D(t)^{-1}$ by a diagonal matrix involving the shear parameters $e:=e_\rho(x,P)$. In particular, it is the weight matrix of a transparent weighted planar network whose non-constant edge-weights are positive Laurent monomials in $t$ and $e$.
\end{lem}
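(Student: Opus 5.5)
The proof has three parts: first factor $M(t)$ into elementary matrices, then identify $L_\rho(P)$ as a diagonal conjugate of $M(t)D(t)^{-1}$, and finally build the transparent network.

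\textbf{Factoring $M(t)$.} By the snake description in \S2.4, $M(t)$ is an explicit ordered product of the elementary matrices $F_i = I+E_{i+1,i}$ and $H_j(x)$ of \eqref{eq:fihi}, where $x$ ranges over the triangle invariants of $t$. I will push all the diagonal factors $H_j(x)$ to the right. The only identity needed is that, for a diagonal matrix $H=\mathrm{diag}(h_1,\ldots,h_n)$,
\[
H\,(I+mE_{i+1,i}) = (I + m\,h_{i+1}h_i^{-1}E_{i+1,i})\,H .
\]
Since each $h_k$ is either $1$ or a triangle invariant, every commutation multiplies the subdiagonal coefficient by a positive Laurent monomial in $t$. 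After all commutations, each $F_i$ has become some $X_i(t)=I+m_iE_{i+1,i}$ with $m_i$ a positive Laurent monomial, and the diagonal factors have collected into a product $D(t)$ of $H_j$'s. This gives \eqref{eq:MXD}. It follows that $M(t)D(t)^{-1}=X_{i_1}\cdots X_{i_N}$ is lower unipotent.

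\textbf{Identifying $L_\rho(P)$.} This is the step I expect to be the main obstacle, because it requires matching normalizations with \cite{MMMZ}. I will use the explicit expression in \cite[Lemma 7.11]{MMMZ} for $L_\rho(P)$ in the basis $\mathscr B_\rho(x,g_x^h)$. The argument is geometric. $L_\rho(P)$ fixes one endpoint flag of $g_x^h$ and moves the other to $H_P$, so it is a unipotent element of the Borel subgroup that stabilizes the fixed flag. Pulling back by $\Sigma_\rho(g_x^h,h_P^-)$, the pair $(\xi(y_1),\xi(y_2))$ together with the third flag is carried to the triple at the vertices of $P$. The change of canonical snake basis between the two sides of $P$ is $M(t)S$. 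With $S$ removed and the diagonal rescaling $D(t)$ stripped off, what remains is the unipotent part $M(t)D(t)^{-1}$, which moves the flag at one vertex to the flag at the other while fixing the common vertex.

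Transporting this to the basis $\mathscr B_\rho(x,g_x^h)$ changes the normalization of the snake vectors only by the edge matrix $D(e_\rho(x,P))$ of \eqref{eq:Ediag}, which is diagonal with entries products of the $z_k=e_k$. Hence
\[
L_\rho(P) = D_e\,M(t)D(t)^{-1}\,D_e^{-1}
\]
for the diagonal matrix $D_e=D(e_\rho(x,P))$, possibly up to a sign matrix $D_\varepsilon$ or a conjugation by $S$. Any such sign or $S$ would be absorbed using Lemma \ref{lem:transprop}(3),(4). I will verify the exact conjugating matrix by comparing both sides on the basis vectors $b_i$, using $b_i\in\xi(y_2)^i\cap\xi(y_1)^{n-i+1}$ and the normalization $\xi(y_3)^1=\mathrm{Span}(b_1+\cdots+b_n)$.

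\textbf{Building the transparent network.} Each $X_i(t)$ is the weight matrix of the standard network: $n$ horizontal lines with weight $1$, plus one slanted edge of weight $m_i$ from level $i$ to level $i+1$. The horizontal lines are vertex-disjoint paths of weight $1$ from each source to the sink on the same level, so this network is transparent. Concatenating these networks gives a transparent network for $M(t)D(t)^{-1}$ by Lemma \ref{lem:transprop}(2). Conjugating by $D_e$ via Lemma \ref{lem:transprop}(3) keeps the network transparent, and the only weights it changes are the slanted ones, each of which becomes $m_i\,d_{i+1}d_i^{-1}$. The boundary edges added in that construction carry weights $d_r^{\pm1}$, which are positive Laurent monomials in $e$. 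All non-constant weights are therefore positive Laurent monomials in $t$ and $e$, and the weight matrix is $L_\rho(P)$.
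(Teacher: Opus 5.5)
Your proposal is correct and follows the paper's proof essentially step for step. You commute the $H_j$ factors to the right to get $M(t)=X_{i_1}\cdots X_{i_N}D(t)$, identify $L_\rho(P)$ as a diagonal conjugate of the unipotent $M(t)D(t)^{-1}$ (up to the $S$/$J$ twist for the other cyclic order), and build the transparent network from the $X_i$-networks via Lemma~\ref{lem:transprop}(2)--(4). The only difference is the middle step: you derive the conjugating matrix from the edge-matrix change of snake basis transported by $\Sigma_\rho(g_x^h,h_P^-)$, implicitly using that a unipotent fixing one flag is determined by the image of one transverse flag, whereas the paper matches $M(t)D(t)^{-1}$ directly with the explicit formulas $W_FA_F(\zeta)C_G(-s_\zeta)$ and $C_G(v)$ of \cite{MMMZ}.
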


\begin{proof}
    Recall from \S2.4 that $M(t)$ has an expression that is a product of the elementary matrices $F_i$ and $H_j$ In \eqref{eq:fihi} - see \cite[Equation (2.8)]{BGup} for the exact expression. To commute a diagonal $H$-factor to the right, we use the fact that 
\begin{equation*} 
  HF_iH^{-1}
  =
  I+\chi_i(H)E_{i+1,i},
\end{equation*}
where $\chi_i(H)$ is the ratio of the $(i+1)$-st and $i$-th diagonal
entries of $H$; note that the conjugation preserves the same triangular form. Commuting all the diagonal $H_i$-factors to the right, one obtains \eqref{eq:MXD} as desired. Since the diagonal entries of every matrix $H_i$ are positive Laurent monomials in the $t$-parameters, the final triangular matrices have the form $I+ m_iE_{i+1,i}$ where the coefficient $m_i$ is also such a  positive Laurent monomial. 
    
    The matrix $\widehat{M}(t) :=M(t) \cdot D(t)^{-1}$ is a  unipotent matrix that fixes the flag at the middle vertex of $P$, taking the flag at one of the remaining vertices to the other, in the canonical snake basis corresponding to one of the sides. In the notation of \cite{MMMZ} (see their Proposition 7.7 and the proof of their Lemma 7.9), this equals $W_FA_F(\zeta)C_G(-s_\zeta)$  (conjugated by $S$ to match with our convention where $\widehat{M}(t)$ is lower-triangular); here $\zeta$ are the logarithms of the triangle parameters $t$.  Hence, by the definition of $L_\rho(P)$ in \cite[Section 7.4]{MMMZ}, and \cite[Proposition 7.7]{MMMZ}, which is in the basis $\mathcal{B}_\rho(x, g_x^h)$, it follows that it is a conjugation of $\widehat{M}(t)$ by a diagonal matrix $C_G(v)$ (in their notation) where $v$ are the logarithms of the shear parameters $e_\rho(x,P)$ (see also Remark 7.10 in their paper). Once again, there are two cases, depending on the cyclic order of the triple of flags, in which $L_\rho(P)$ is either upper-triangular or lower-triangular; these again differ by a conjugation by the antidiagonal matrix $J$ (see the proof of part (4) of Lemma \ref{lem:transprop}). 

    Note that each $X_i$-factor  in \eqref{eq:MXD} is the weight matrix of a transparent weighted network  consisting of horizontal edges of weight $1$ and one slanted edge whose weight is a positive Laurent monomial in the triangle $t$-parameters.  Thus, concatenating these planar networks results in a transparent network for $\widehat{M}(t)$. Applying part (3) of Lemma \ref{lem:transprop}, and part (4) to match the chosen cyclic order (since $S= D_\varepsilon J$),  we then have that $L_\rho(P)$ is the weight matrix of a transparent network as desired. 
\end{proof}

\noindent \textit{Remark.} The transparent network $(\Gamma, \omega)$  with weight matrix $L_\rho(P)$ could have some weights which are $-1$, namely those corresponding to the diagonal matrix $D_\epsilon$ and its inverse. 
However the weighted planar network $(\Gamma, \lvert \omega \rvert)$ with each weight replaced by its modulus has, for both cyclic orders, the weight matrix $D_\varepsilon L_{\rho_0}(P)D_\varepsilon^{-1}$ where $\rho_0$ is the Hitchin representation in the bending fiber of $\rho$ (see \eqref{eq:rho0coords}).

\medskip

\noindent\textbf{Example.}\label{ex:n3-normalized-snake} We illustrate the decomposition \eqref{eq:MXD} in the case $n=3$, in which case the plaque $P$ has a unique triangle parameter $t\in \mathbb{C}^\ast$.
In the normalization used in \cite{BGup}, we have
\begin{equation*}
  M(t)=F_2H_1(t)F_1F_2
\end{equation*}
that yields, by commuting the diagonal $H$-factor to the right as in the proof of the preceding lemma, 
\[
 M(t)
 =
 \begin{pmatrix}
 1&0&0\\
 1&1&0\\
 1&1+t&t
 \end{pmatrix} = X_2(1) \cdot X_1(1) \cdot X_2(t) \cdot D(t)
\]
where $X_1(t) = I + tE_{21}$, $X_2(t) = I + tE_{32}$ and $D(t)=\text{diag}(1,1,t)$.

\medskip

\begin{cor}
\label{cor:slithering-networks}
Fix $x$ and $h$ as above, and let $\mathcal F_N \nearrow \mathcal P(g_x^h,h) $ be an increasing finite exhaustion.  Let
\[
 S_{\rho,N}(g_x^h,h)
 :=
 \prod_{T\in \mathcal{F}_N}^{\longrightarrow}
 L_{\rho}(T).
\]
Then $S_{\rho,N}(g_x^h,h)$ is the weight matrix of a transparent
weighted planar network $(\Gamma ,\omega)$ and $ S_{\rho,N}(g_x^h,h)
 \longrightarrow
 \Sigma_\rho(g_x^h,h)$. Moreover, the weight matrix of $(\Gamma ,\lvert \omega\rvert)$ is $D_\varepsilon S_{\rho_0,N}D_\varepsilon^{-1}$ where $\rho_0$ is the Hitchin representation in the bending fiber of $\rho$ (see \eqref{eq:rho0coords}). 
\end{cor}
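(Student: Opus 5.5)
The plan is to assemble the statement from three earlier ingredients: Lemma \ref{lem:Lnetw} for each factor, Lemma \ref{lem:transprop}(2) for concatenation, and \cite[Proposition~5.2]{MMMZ} for convergence.

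\textbf{Step 1: the network.} By Lemma \ref{lem:Lnetw}, each factor $L_\rho(T)$ with $T\in\mathcal F_N$ is the weight matrix of a transparent weighted planar network $(\Gamma_T,\omega_T)$. Its non-constant weights are positive Laurent monomials in $t_\rho(T)$ and $e_\rho(x,T)$. Its remaining weights are constants $\pm1$, coming from $D_\varepsilon^{\pm1}$.

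Since $\mathcal F_N$ is finite and linearly ordered by the transverse orientation, we concatenate the networks $\Gamma_T$ in that order. Concatenation corresponds to matrix multiplication, so the resulting network $(\Gamma,\omega)$ has weight matrix $S_{\rho,N}(g_x^h,h)$. It is transparent by Lemma \ref{lem:transprop}(2), applied inductively.

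\textbf{Step 2: convergence.} The map $L_\rho$ is H\"older-extendable and its extension gives $\Sigma_\rho$ \cite[Lemma 7.11(2)]{MMMZ}. Hence by \cite[Proposition~5.2]{MMMZ} the ordered finite products over any increasing exhaustion converge to $\Sigma_\rho(g_x^h,h)$, independently of the exhaustion.

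\textbf{Step 3: the modulus statement.} We first identify $\rho_0$. Its cocycle is the real part of that of $\rho$, so
\[
|t_\rho(T)|=t_{\rho_0}(T), \qquad |e_\rho(x,T)|=e_{\rho_0}(x,T).
\]
Each non-constant weight is a positive Laurent monomial, so replacing it by its modulus gives the same monomial evaluated at the $\rho_0$-parameters. Each $-1$ becomes $1$.

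Next we use the Remark after Lemma \ref{lem:Lnetw}: $(\Gamma_T,|\omega_T|)$ has weight matrix $D_\varepsilon L_{\rho_0}(T)D_\varepsilon^{-1}$, for either cyclic order. Concatenation commutes with taking moduli of weights. Therefore $(\Gamma,|\omega|)$ has weight matrix
\[
\prod_{T\in\mathcal F_N}^{\longrightarrow} D_\varepsilon L_{\rho_0}(T)D_\varepsilon^{-1}=D_\varepsilon S_{\rho_0,N}D_\varepsilon^{-1},
\]
because the intermediate factors $D_\varepsilon^{-1}D_\varepsilon$ telescope.

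\textbf{Main obstacle.} The delicate point is this Remark, which must be checked uniformly across plaques. The sign conventions have to be consistent for both cyclic orders, upper- and lower-triangular: in each case the only non-positive weights must be the $\pm1$ edges coming from $D_\varepsilon^{\pm1}$ and from the reflection $J$.

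I would verify this directly from the construction in Lemma \ref{lem:transprop}(3),(4). Reflecting across a horizontal line leaves all weights unchanged. Conjugating by $D_\varepsilon$ only inserts boundary edges of weight $\pm1$. Taking moduli therefore removes exactly the $D_\varepsilon$-conjugation and nothing else, which gives the same outer conjugation for every factor. The unipotent interior network is then that of $L_{\rho_0}(T)$.
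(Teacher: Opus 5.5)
Your proposal is correct and follows the paper's proof essentially verbatim. Like the paper, you concatenate the transparent networks of Lemma \ref{lem:Lnetw} (transparency by Lemma \ref{lem:transprop}(2)), take convergence from \cite[Proposition~5.2 and Lemma~7.11(2)]{MMMZ}, and get the modulus statement from the Remark after Lemma \ref{lem:Lnetw}, with the intermediate $D_\varepsilon^{-1}D_\varepsilon$ factors telescoping. Your extra sketch justifying that Remark goes beyond what the paper writes out, and it is valid only if the $D_\varepsilon$-conjugation enters the network for both cyclic orders, which is exactly what the Remark asserts.
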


\begin{proof}
Concatenate the networks from
Lemma~\ref{lem:Lnetw}.  Transparency is preserved by
Lemma~\ref{lem:transprop}(2), and the convergence  (independent of
the exhaustion) follows from
\cite[Proposition~5.2 and Lemma~7.11(2)]{MMMZ}. The observation about the weight matrix of the planar network obtained by taking the modulus of each weight follows from the preceding Remark, since the $D_\varepsilon$ factors cancel out in the finite product. 
\end{proof}

\begin{cor}[Approximants of a generalized building block]
\label{cor:approx-gbb}
Consider a generalized building block
\[
 B_\rho
 =
 T\bigl(t_\rho(Q)\bigr)^\delta
 \Sigma_\rho(g_Q,g_P)
 E\bigl(e_\rho(P,Q)\bigr),
 \qquad
 \delta\in\{-1,+1\},
\]
where $g_P$ is the side of $P$ separating $P$ from $Q$ and $g_Q$ is
the corresponding side of $Q$.  Replace the slithering matrix by the
finite approximant from
Corollary~\ref{cor:slithering-networks}, and define
\[
 B_{\rho,N}
 =
 T\bigl(t_\rho(Q)\bigr)^\delta
 S_{\rho,N}(g_Q,g_P)
 E\bigl(e_\rho(P,Q)\bigr).
\]
Then $B_{\rho,N}$ is the weight matrix of a weighted planar network
$(\mathcal B_{\rho,N},\omega_{\rho,N})$. Moreover, the weight matrix of the planar network $(\mathcal B_{\rho,N}, \lvert \omega_{\rho,N} \rvert)$ is the corresponding generalized building block $B_{\rho_0,N}$ for $\rho_0$, the Hitchin representation in the bending fiber (see \eqref{eq:rho0coords}). 
\end{cor}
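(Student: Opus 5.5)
The proof will build the network for $B_{\rho,N}$ by concatenating three networks, one for each factor, and then check that replacing weights by their moduli turns each factor into the corresponding factor for $\rho_0$. Concatenation of planar networks corresponds to multiplication of weight matrices, so the product order is handled by placing the networks side by side in the appropriate order.

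\emph{Outer factors.} By the sketch of proof of Lemma \ref{keylem}, the building block $T(t_\rho(Q))^{\delta}E(e)$ is already a weight matrix with positive Laurent monomial weights. Here, however, the slithering approximant is inserted in the middle. I will therefore split off the $E$-part as follows.

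For $\delta=+1$, I write $T\,S_{\rho,N}\,E = M(t)\cdot \bigl(S\, S_{\rho,N}\, S^{-1}\bigr)\cdot (S E)$.
\begin{itemize}
\item $M(t)$ is the concatenation of the $F_i$, $H_j$ networks (or of the $X_i$ and $D$ networks via \eqref{eq:MXD}), with positive monomial weights.
\item $SE = S D(e) S$ is diagonal with positive monomial entries, up to a sign coming from $S^2=\pm I$. This sign can be absorbed into the determinant-one normalization, as in \cite{BGup}.
\item $S\,S_{\rho,N}\,S^{-1}$ is the weight matrix of a transparent network, by Corollary \ref{cor:slithering-networks} combined with Lemma \ref{lem:transprop}(4).
\end{itemize}

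For $\delta=-1$, I use the factorization from the sketch of Lemma \ref{keylem}, namely $T^{-1} = S\,M(t)^{-1} S^{-1}$ up to sign, written as a product of the positive step networks $S\,\mathrm{Step}(k)\,S$. I then insert $S^{-1}\cdot S$ around the slithering factor in the same way.

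\emph{Modulus.} After replacing the bending cocycle by zero, i.e. passing to \eqref{eq:rho0coords}, every weight that is a positive Laurent monomial in $t_\rho, e_\rho$ is sent by $|\cdot|$ to the same monomial in $t_{\rho_0}, e_{\rho_0}$. This is because $|\exp(\alpha+i\theta)|=\exp(\alpha)$, and the real parts of the shear-bend cocycle coincide for $\rho$ and $\rho_0$. Hence the outer networks with modulus weights have weight matrices $M(t_{\rho_0})$, $S E(e_{\rho_0})$ (respectively the $T^{-1}$ steps for $\rho_0$).

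\emph{Main obstacle: the sign bookkeeping for the middle factor.} By Corollary \ref{cor:slithering-networks}, taking moduli in the slithering network produces $D_\varepsilon S_{\rho_0,N} D_\varepsilon^{-1}$ rather than $S_{\rho_0,N}$. The $D_\varepsilon$ conjugation must therefore be absorbed. I would do this by noting that $S=D_\varepsilon J$ from \eqref{eq:SJD}, so the signs attached to the neighbouring $S$-factors are exactly the $D_\varepsilon$'s.

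Concretely, I would redo the construction with the reflected network $\overline{\mathcal N}$ (weight matrix $J S_{\rho,N} J$) in place of the $S$-conjugate. I would then move the $D_\varepsilon$'s outward into the neighbouring diagonal pieces $SE$ and the $M(t)$ network. In these pieces $D_\varepsilon$ only contributes the sign pattern that is already present in the $\rho_0$-version, since the same $S$ appears there. After this, all negative weights cancel along each path in matched pairs, and the modulus network has weight matrix exactly $T(t_{\rho_0})^\delta S_{\rho_0,N} E(e_{\rho_0}) = B_{\rho_0,N}$.

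I expect that checking this cancellation carefully for both cyclic orders (upper- versus lower-triangular $L_\rho$) and both values of $\delta$ is the only delicate step. Everything else is concatenation.
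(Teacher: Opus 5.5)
Your factorizations are the paper's: for $\delta=+1$ you use $M(t)\,(SS_{\rho,N}S^{-1})\,(SD(e)S)$, and the outer factors and the modulus of the monomial weights are handled correctly. The gap is the one non-routine step. You leave it as an expectation, and the mechanism you sketch is not what makes it work.

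Taking moduli turns every $\pm1$ weight into $1$. So the $D_\varepsilon$ boundary edges from Lemma~\ref{lem:transprop}(3),(4) contribute nothing to the modulus network wherever you put them. Moving them into the $M(t)$ or $SD(e)S$ networks changes nothing, and the modulus network has no negative weights left to ``cancel in matched pairs''. Nor do the $\rho_0$-versions of $M(t)$ and $SD(e)S$ carry a $D_\varepsilon$ sign pattern for them to match: $SD(e_0)S$ is a uniformly signed diagonal matrix.

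What actually survives is the $D_\varepsilon$-conjugation \emph{inside} the modulus of the reflected slithering network. By Corollary~\ref{cor:slithering-networks}, $(\overline{\mathcal N},|\omega|)$ has weight matrix $J\,(D_\varepsilon S_{\rho_0,N}D_\varepsilon^{-1})\,J$. The missing idea is that this conjugation is not an error to be cancelled. It is exactly what turns the $J$-conjugation back into $S$-conjugation. From $S=D_\varepsilon J$ and $J=J^{-1}$ you get $JD_\varepsilon=S^{-1}$, hence
\[
J\bigl(D_\varepsilon S_{\rho_0,N}D_\varepsilon^{-1}\bigr)J=(JD_\varepsilon)S_{\rho_0,N}(JD_\varepsilon)^{-1}=S^{-1}S_{\rho_0,N}S=SS_{\rho_0,N}S^{-1},
\]
using $S^2=(-1)^{n-1}I$ in the last step. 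Without this identity, the claim that the modulus network has weight matrix $B_{\rho_0,N}$ is unproved.

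Once you have the identity, no case analysis remains. Corollary~\ref{cor:slithering-networks} already gives $D_\varepsilon S_{\rho_0,N}D_\varepsilon^{-1}$ for both cyclic orders. For $\delta=-1$, the central factor $S^{-1}S_{\rho,N}S$ equals $SS_{\rho,N}S^{-1}$, so the same network and identity apply.

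Your formula ``$T^{-1}=SM(t)^{-1}S^{-1}$ up to sign'' is not an identity, since $T^{-1}=S^{-1}M(t)^{-1}$. The correct splitting, as in the paper, is
\[
T^{-1}S_{\rho,N}E=\bigl(S^{-1}M(t)^{-1}S\bigr)\bigl(S^{-1}S_{\rho,N}S\bigr)\bigl(S^{-1}D(e)S\bigr).
\]

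Finally, $SD(e)S$ is $(-1)^{n-1}$ times the diagonal matrix $D(e)$ with its entries reversed. For even $n$ the determinant-one normalization does not remove this sign, because $\det(-I)=1$. It is only a global sign, however, and is harmless: everything later uses only moduli of minors and of eigenvalues.
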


\begin{proof}
Write, as in \cite[Equations~(2.4) and~(2.10)]{BGup},
\[
 T(t)=M(t)S,
 \qquad
 E(e)=D(e)S,
\]
where $D(e)$ is diagonal and $S$ is the signed side-reversal matrix.

Consider the case when $\delta=+1$.  Then
\begin{align}
 T(t)S_{\rho,N}E(e)
 &=
 M(t)
 \bigl(SS_{\rho,N}S^{-1}\bigr)
 \bigl(SD(e)S\bigr).
 \label{eq:positive-coarse-building block}
\end{align}
The matrix $M(t)$ is the weight-matrix of a planar network by
\cite[Lemma~3.4 and Proposition~3.6]{BGup}, so is the central factor by Corollary \ref{cor:slithering-networks} and part (4) of Lemma \ref{lem:transprop}, and the remaining factor is a diagonal matrix that is the weight matrix of a planar network with exactly $n$ edges at every level, weighted by the diagonal entries. The desired weighted planar network $(\mathcal B_{\rho,N},\omega_{\rho,N})$ is then the concatenation of these.

If $\delta=-1$, then
\begin{align}
 T(t)^{-1}S_{\rho,N}E(e)
 &=
 \bigl(S^{-1}M(t)^{-1}S\bigr)
 \bigl(S^{-1}S_{\rho,N}S\bigr)
 \bigl(S^{-1}D(e)S\bigr)
 \label{eq:negative-coarse-building block}
\end{align}
and a similar argument applies, where this time the weighted planar network associated with the first factor was constructed in
\cite[Equations~(3.13)--(3.18) and Proposition~3.10]{BGup}. 
The non-constant weights of these networks are Laurent monomials with a positive real coefficient, as that follows from the corresponding property of the H\"{o}lder-extendable $L$-factors (see Lemma \ref{lem:Lnetw}) and of the other factors (see \cite[Lemma 3.4 and the proof of Proposition 3.10]{BGup}). 

For the modulus network, Corollary \ref{cor:slithering-networks} gives
\(D_\varepsilon S_{\rho_0,N}D_\varepsilon^{-1}\) for the
central factors in \eqref{eq:positive-coarse-building block} or \eqref{eq:negative-coarse-building block}. Replacing each weight by its modulus in the corresponding planar network, the 
construction of Lemma \ref{lem:transprop}(4) yields conjugation by the antidiagonal matrix $J$. This is because replacing each weight of the planar network for $D_\epsilon$ by its modulus yields a planar network whose  weight matrix equals the identity matrix $I$. Hence the resulting planar network for the central factor has weight matrix
\begin{equation*} 
J(D_\varepsilon S_{\rho_0,N}D_\varepsilon^{-1})J
 =(JD_\varepsilon)S_{\rho_0,N}(JD_\varepsilon)^{-1} =S^{-1}S_{\rho_0,N}S
  =SS_{\rho_0,N}S^{-1},
\end{equation*}
where the final equality follows from
\(S^2=(-1)^{n-1}I\).  This is precisely the central factor occurring
for the Hitchin representation  \(\rho_0\) in both \eqref{eq:positive-coarse-building block} or \eqref{eq:negative-coarse-building block}.
 \end{proof}

\subsection{Completing the proof of Theorem \ref{thm:main}}

Recall that by Theorem \ref{bon-gen}, the representation $\rho$ is determined by the complex-valued $\lambda$-cocycle $\sigma = (\alpha_\rho,\theta_\rho) \in \Y(\lambda,n;\mathbb{C}/2\pi i\mathbb{Z})$.

By \cite[Theorem~4.15]{MMMZ}, after fixing a base plaque and a canonical snake basis for one of its sides, there is a unique Hitchin representation $\rho_0:\pi_1(S) \to \pslnc$ 
with coordinates
\begin{equation}\label{eq:rho0coords}
 (\alpha_{\rho_0},\theta_{\rho_0})
 = (\operatorname{Re}\alpha_\rho,\operatorname{Re}\theta_\rho).
\end{equation} 

In particular, the corresponding triangle and shear parameters of $\rho_0$ (see \eqref{eq:exp-coord}) are obtained by replacing each corresponding parameter of $\rho$ by its modulus:
\begin{equation}\label{eq:closed-moduli}
\exp({\alpha_{\rho_0}^{i}(P,Q)})=|e_{i}(P,Q)|,
 \qquad
 \exp({\theta_{\rho_0}^{j}(x)})=|t_{j}(x)|
\end{equation}
where $i\in \mathcal{A}_n$ and $j\in \mathcal{B}_n$ as in \eqref{eq:anbn}.

\medskip

\noindent The replacement of Lemma \ref{lem:dom} and  Lemma \ref{lem:mindom} in this context is then the following lemma about the finite approximants of the monodromy matrix of $\gamma$ obtained by concatenating finite approximants of the generalized building block (see Corollary \ref{cor:approx-gbb}).  Recall that $\bigwedge^k A$ denotes the $k$-th Grassmann power or exterior power of $A$,  whose entries are the $k\times k$ minors of $A$.

\begin{lem}\label{lem:closed-monodromy-approximants}
Let $\gamma\in\pi_1(S)$ be an essential curve, and assume that the axis of $\gamma$ in $\widetilde S$ is not a leaf of $\widetilde\lambda$. Let $A,A_0 \in \slnc$ be the normalized (determinant-one) lifts of the monodromies $\rho(\gamma), \rho_0(\gamma)$ respectively. 
Then there is a sequence of approximating matrices
$A_{N},A_{0,N}\in\slnc$ with the following properties:
\begin{enumerate}
\item[(i)] $A_{N}\to A$ and $A_{0,N}\to A_0$ as $N\to \infty$;
\item[(ii)] for each $N$, there is a (finite) weighted planar network $(\mathcal A_N,\omega_N)$ with weight matrix $A_{N}$, and the weight matrix of $(\mathcal A_N, \lvert \omega_N \rvert)$  is $A_{0,N}$.
\item[(iii)] for every $1\leq k\leq n$, we have
\begin{equation}\label{eq:compound-finite-comparison}
 \bigl|\bigwedge^k A_N\bigr|
 \leq \bigwedge^k A_{0,N}
\end{equation}
entrywise.
\end{enumerate}
\end{lem}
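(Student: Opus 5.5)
The plan is to build the approximants from a coarse itinerary and then transfer the finite-lamination arguments (Lemmas \ref{lem:dom} and \ref{lem:mindom}) verbatim to the approximating networks.

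\textbf{Choosing the itinerary.} Since the axis of $\gamma$ is not a leaf of $\widetilde\lambda$, it crosses $\widetilde\lambda$ transversely. A lift $\widetilde c_\gamma$ therefore passes through the interior of some plaque $P_0$, and we may choose a coarse itinerary $P_0,P_1,\ldots,P_m=\gamma P_0$ as in \S3.2. By \eqref{eq:coarse-monodromy}, $A=\mathcal B_m\cdots\mathcal B_1$ is the determinant-one lift of $\rho(\gamma)$. The same itinerary, with the same turning signs $\delta_i$, gives $A_0=\mathcal B^0_m\cdots\mathcal B^0_1$ for $\rho_0$. This is because the combinatorics of plaques and separating sides depends only on $\lambda$ and not on the representation, and all our normalizations produce determinant-one matrices.

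\textbf{Defining the approximants and proving (i).} For each $i$, fix an increasing finite exhaustion $\mathcal F_N^{(i)}\nearrow\mathcal P(g_i^-,g_{i-1}^+)$. Let $\mathcal B_{i,N}$ be the approximant of Corollary \ref{cor:approx-gbb}, and set $A_N:=\mathcal B_{m,N}\cdots\mathcal B_{1,N}$. Define $A_{0,N}$ in the same way, using the same exhaustions for $\rho_0$. Each $S_{\rho,N}$ is a product of unipotent matrices, so it has determinant one, and hence $A_N,A_{0,N}\in\slnc$. Corollary \ref{cor:slithering-networks} gives $S_{\rho,N}\to\Sigma_\rho$ and $S_{\rho_0,N}\to\Sigma_{\rho_0}$. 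Since the product has finitely many factors and matrix multiplication is continuous, (i) follows.

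\textbf{Proving (ii).} Let $(\mathcal A_N,\omega_N)$ be the concatenation of the networks $(\mathcal B_{i,N},\omega_{i,N})$ from Corollary \ref{cor:approx-gbb}. Concatenating networks multiplies their weight matrices, so its weight matrix is $A_N$. Taking moduli commutes with concatenation. The same corollary says that the modulus network of each block has weight matrix $B_{\rho_0,N}$, the block for $\rho_0$, so the modulus network $(\mathcal A_N,|\omega_N|)$ has weight matrix $A_{0,N}$.

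\textbf{Proving (iii).} This follows from Lindstr\"om's lemma (Lemma \ref{lem:lind}) as in Lemma \ref{lem:mindom}. Each entry of $\bigwedge^kA_N$ is a minor $\Delta_{I,J}(A_N)$, which equals a sum of weights of vertex-disjoint families in $(\mathcal A_N,\omega_N)$. The corresponding minor of $A_{0,N}$ is the sum of the moduli of those same weights. The triangle inequality then gives $|\Delta_{I,J}(A_N)|\le\Delta_{I,J}(A_{0,N})$.

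\textbf{Main obstacle.} The delicate point is (ii), specifically that the moduli of the weights reproduce exactly the $\rho_0$ building blocks. The sign matrices $D_\varepsilon$ in the slithering networks have to cancel or be absorbed by conjugation by $S$. This is already handled in Corollary \ref{cor:approx-gbb}, using \eqref{eq:closed-moduli} and the fact that all non-constant weights are positive Laurent monomials. What remains to check is that this identification is compatible across consecutive blocks: in particular, that the same exhaustion can be used for $\rho$ and $\rho_0$, and that the normalization of canonical snake bases along the itinerary is identical for the two representations. Both hold because $\rho_0$ is defined through \eqref{eq:rho0coords} with respect to the same base plaque and snake basis.
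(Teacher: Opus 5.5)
Your proposal is correct and follows the paper's proof. You approximate each slithering factor by the finite transparent products of Corollary~\ref{cor:slithering-networks}, use Corollary~\ref{cor:approx-gbb} to get networks whose modulus versions have the $\rho_0$ approximants as weight matrices, and conclude (iii) by Lindstr\"om's lemma and the triangle inequality. The only difference is that the paper takes the simplest coarse itinerary, with a single generalized building block from $P$ to $Q=\gamma\cdot P$, whereas your multi-block itinerary also uses the immediate fact that taking moduli of weights commutes with concatenation of networks.
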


\begin{proof}
We shall choose a particularly simple coarse itinerary with a single building block (\textit{cf.} \eqref{eq:coarse-monodromy}): choose a plaque $P$ whose interior meets a lift $\tilde{c}_\gamma$ of $\gamma$, and let $Q=\gamma \cdot P$.  Orient $\tilde{c}_\gamma$  from $P$ toward $Q$.  Let $g_P\subset\partial P$ be the side through which the axis exits $P$, and let $g_Q\subset\partial Q$ be the side through which it enters $Q$, such that $g_P,g_Q$ are the separating sides.  We choose the canonical snake basis $\mathcal B_\rho(P, g_P)$ associated with the side $g_P$.  Its $\gamma$-translate is then the canonical basis $\mathcal B_\rho(Q, g_Q^\prime)$  where $g_Q^\prime = \gamma \cdot g_P$ is a side of $Q$.  Note that $g_Q$ and $\gamma g_Q^\prime$ are distinct and determine a positive or negative turn inside $Q$; denote its sign by $\delta$.

By Lemma \ref{lem:genbb} and \eqref{decomp-limit}, the generalized building block
\begin{equation}\label{eq:closed-monodromy-factorization}
 A
 =T\bigl(t_\rho(Q)\bigr)^\delta
  \Sigma_\rho(g_Q,g_P)
  E\bigl(e_\rho(P,Q)\bigr)
\end{equation}
sends the chosen base snake to its $\gamma$-translate and is a determinant-one lift of $\rho(\gamma)$.  The same construction for $\rho_0$ gives an expression for the determinant-one lift $A_0$.

Choose an increasing finite exhaustion of $\Plaques(g_P,g_Q)$ and replace the slithering maps in \eqref{eq:closed-monodromy-factorization} for  $\rho(\gamma)$ by the approximants  $S_{\rho,N}(g_Q,g_P)$ (see Corollary \ref{cor:approx-gbb}) to obtain approximations of $A$ we shall denote by $A_{N}$. The corresponding  approximants  of $A_0$ obtained by using the approximants  $S_{0,N}(g_Q,g_P)$ of the slithering map shall be denoted by $A_{0,N}$; note that (i) holds by Corollary \ref{cor:slithering-networks}. 
By the final statement of Corollary \ref{cor:approx-gbb} $A_{0,N}$ is the weight matrix of the planar network $(\mathcal A_N, \lvert \omega_N\rvert )$ obtained by replacing each weight by its modulus; this proves (ii). 

Finally, apply Lindstr\"om's lemma to every $k\times k$ minor of the two planar networks in (ii).  The triangle inequality, as in the proof of Lemma \ref{lem:dom} then implies 
\[
 \abs{\minor{I}{J}{A_{N}}}
 \leq \minor{I}{J}{A_{0,N}}
\]
for any $|I|=|J|=k$ which is the desired domination \eqref{eq:compound-finite-comparison}.
\end{proof}

\medskip 

\noindent  We finally obtain:

\begin{proof}[Proof of Theorem \ref{thm:main}]
Assume that $\gamma$ is not a closed leaf of $\lambda$. In this case, each of the two normalized (determinant-one) lifts $A,A_0$ of $\rho(\gamma),\rho_0(\gamma)$ respectively is a product of finitely many generalized building blocks. By Lemma \ref{lem:closed-monodromy-approximants} there are  matrices $A_N, A_{0,N}$ such that $A_N \to A$ and $A_{0,N} \to A_0$ as $N\to \infty$.  
Moreover, each minor of $A_{0,N}$ is at least the corresponding minor of $A_N$ in modulus, see \eqref{eq:compound-finite-comparison}. By the convergence, it follows that each $k\times k$ minor of $A_0$ is at least the corresponding minor of $A$ in modulus, for each $1\leq k\leq n$. Note that for $k=1$, this implies the entry-wise domination $A_0 \geq \lvert A \rvert$. The argument in the proofs of Propositions \ref{prop:hilbg} and \ref{prop:trlbg} then imply that $\ell_\rho^H(\gamma)  \leq  \ell_{\rho_0}^H(\gamma)$ and $\ell_\rho(\gamma) \leq \ell_{\rho_0}(\gamma)$.

In the case that $\gamma$ is a closed leaf of $\lambda$,  applying  
\cite[Section~3.3 and Theorem~10.1]{MMMZ} we can express the logarithms of the  successive ratios of eigenvalues ($\log \frac{|\lambda_{i+1}|}{|\lambda_{i}|}$) of $\rho(\gamma)$ in terms of
$\operatorname{Re}\alpha_\rho$. Since this real part of the cocycle is identical for $\rho_0$, together with the determinant-one condition, we obtain $\lvert \lambda_i\rvert= \lvert \lambda_i^\prime \rvert $ for all $1\leq i \leq n$, where $\lambda_i^\prime$s are the eigenvalues of $A_0$. By Definition \ref{defn:lengths}, it follows that in this case $\ell_\rho^H(\gamma) =\ell_{\rho_0}^H(\gamma)$ and $\ell_\rho(\gamma) = \ell_{\rho_0}(\gamma)$.
\end{proof}

In the next section, we shall in fact prove a sharper result, as for the entropy rigidity we would need a \textit{strict} domination, at least for ``most" curves.

\section{Proof of entropy rigidity for bending}

\subsection{Statistical domination and entropy} 
From the definition of Hilbert entropy of a surface-group representation into $\pslnc$ (see Definition \ref{defn:ent}), we have the following immediate corollary (\textit{cf.} Corollary B of \cite{DT}):

\begin{lem}\label{lem:ent-dom}  Given two representations $\rho, \sigma:\pi_1(S) \to \pslnc$ such that $ \rho$ dominates $\sigma$ in the Hilbert length spectrum, we have that the entropy $H(\sigma) \geq  H(\rho)$.  Moreover, if the domination is strict, so is the entropy inequality. 
\end{lem}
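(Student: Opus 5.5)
The plan is to compare the counting functions directly. Write
\[
N_\rho(L):=\#\{[\gamma]\ :\ \ell^H_\rho(\gamma)\le L\},
\]
and similarly $N_\sigma(L)$. By hypothesis $\ell^H_\sigma(\gamma)\le \ell^H_\rho(\gamma)$ for every $\gamma\in\pi_1(S)$. Hilbert length depends only on the conjugacy class, since eigenvalue moduli are conjugation invariant, so every class $[\gamma]$ with $\ell^H_\rho(\gamma)\le L$ also satisfies $\ell^H_\sigma(\gamma)\le L$. This gives the set inclusion
\[
\{[\gamma]: \ell^H_\rho(\gamma)\le L\}\subseteq\{[\gamma]: \ell^H_\sigma(\gamma)\le L\},
\]
and hence $N_\rho(L)\le N_\sigma(L)$ for all $L$. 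Taking $\log$, dividing by $L$ and passing to the $\limsup$ yields $H(\rho)\le H(\sigma)$.

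For the strict statement, assume there is $0<\alpha<1$ with $\ell^H_\sigma(\gamma)\le\alpha\,\ell^H_\rho(\gamma)$ for all $\gamma$. The same inclusion argument, applied at threshold $L/\alpha$ for $\rho$, gives
\[
\{[\gamma]:\ell^H_\rho(\gamma)\le L/\alpha\}\subseteq\{[\gamma]:\ell^H_\sigma(\gamma)\le L\}.
\]
Thus $N_\sigma(L)\ge N_\rho(L/\alpha)$. Substituting $L'=L/\alpha$, we get
\[
\frac{\log N_\sigma(L)}{L}\ \ge\ \frac{1}{\alpha}\cdot\frac{\log N_\rho(L')}{L'},
\]
and taking $\limsup$ as $L\to\infty$ (equivalently $L'\to\infty$) gives $H(\sigma)\ge H(\rho)/\alpha$.

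The only point needing care is converting $H(\sigma)\ge H(\rho)/\alpha$ into a strict inequality. This requires $H(\rho)>0$, and also $H(\rho)<\infty$ so that the statement is meaningful. In the intended applications $\rho$ is Hitchin, where the entropy is positive and finite by \cite{PotrieSambarinoEigenvaluesEntropy}. In general $H(\rho)>0$ holds as soon as $\rho$ is discrete with lengths growing at most linearly in word length. Indeed, $\ell^H_\rho(\gamma)\le C|\gamma|$, where $|\gamma|$ is the minimal word length in the conjugacy class, and the number of conjugacy classes of word length at most $m$ grows exponentially in $m$ for a surface group; together these give a positive lower bound on $H(\rho)$. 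So the plan is to state the strict inequality under the standing assumption $0<H(\rho)<\infty$, supplying the word-length argument for positivity. I expect this positivity and finiteness bookkeeping, rather than the counting itself, to be the only real subtlety.
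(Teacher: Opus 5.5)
Your argument is correct and is the same as the paper's: the paper uses the identical conjugacy-class inclusion (phrased at threshold $\lambda L$ for $\sigma$ rather than $L/\alpha$ for $\rho$) to obtain $H(\rho)\le\lambda H(\sigma)$. Your caveat that strictness needs $0<H(\rho)<\infty$ is a point the paper leaves implicit; it causes no problem in the application, where $\rho=\rho_0$ is Hitchin and has finite positive entropy (Lemma \ref{lem:app-hilbert-growth}).
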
 

\begin{proof} Suppose there exists a positive real $\lambda \leq 1$ such that  $l_\sigma(\gamma) \leq \lambda \ell_\rho(\gamma)$ for all $\gamma \in \pi_1(S)$. Here, recall that the domination is said to be \textit{strict} if and only if $\lambda <1$. 

In what follows, $[\gamma]$ varies amongst all conjugacy classes in $\pi_1(S)$. We have an inclusion of subsets
\begin{equation}\label{eq:sub} 
\{[\gamma] \ \vert\ \ell_\rho(\gamma) \leq L\} \subset \{[\gamma]\ \vert\ l_\sigma(\gamma) \leq \lambda L\}.
\end{equation}
Using Definition \ref{defn:ent} we then obtain
\begin{equation*}
H(\rho) =  \limsup\limits_{L\to \infty} \displaystyle\frac{\log \#\{[\gamma]  \ \vert\ \ell_\rho(\gamma) \leq L\}}{L} \leq  \limsup\limits_{L\to \infty} \displaystyle\frac{\lambda \cdot \log \#\{[\gamma] \ \vert\ l_\sigma(\gamma) \leq \lambda L\}}{\lambda L}  = \lambda H(\sigma).
\end{equation*}
\end{proof}

Our first observation is that for the strict entropy inequality to hold, it is enough to know strict domination for \textit{most} curves -- see condition \eqref{eq:stat} below, which we shall refer to as  \textit{statistical} domination:

\begin{lem}[Statistical domination]\label{lem:stat} Let $\rho,\sigma:\pi_1(S)\to\pslnc$ be two representations, and suppose there exist constants $0<\alpha<1$ and $0<\lambda<1$ such that, for all sufficiently large $L$, the ratio
\begin{equation}\label{eq:stat} 
 \displaystyle\frac{\#\{[\gamma] \text{ a conjugacy class in } \pi_1(S) \ \vert\ \ell_\rho(\gamma) \leq L \text{ and } l_\sigma(\gamma) \leq \lambda \ell_\rho(\gamma)\}}{\#\{\gamma \ \vert\ \ell_\rho(\gamma) \leq L \}} \geq \alpha.
 \end{equation}
 Then we have the entropy inequality $H(\rho) \leq \lambda H(\sigma)$. 
 
\end{lem}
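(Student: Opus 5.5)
The approach is to mimic the proof of Lemma \ref{lem:ent-dom}, but restricted to the subset of ``good'' curves singled out by \eqref{eq:stat}. Fix $L$ large and set
\[
N_\rho(L):=\#\{[\gamma]\ \vert\ \ell_\rho(\gamma)\leq L\},\qquad G(L):=\{[\gamma]\ \vert\ \ell_\rho(\gamma)\leq L \text{ and } \ell_\sigma(\gamma)\leq \lambda\ell_\rho(\gamma)\}.
\]
The hypothesis \eqref{eq:stat} says $\#G(L)\geq \alpha N_\rho(L)$ for all sufficiently large $L$. Every $[\gamma]\in G(L)$ satisfies $\ell_\sigma(\gamma)\leq\lambda\ell_\rho(\gamma)\leq \lambda L$. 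This gives the analogue of the inclusion \eqref{eq:sub}:
\[
G(L)\subset\{[\gamma]\ \vert\ \ell_\sigma(\gamma)\leq \lambda L\},
\]
and hence $\alpha N_\rho(L)\leq N_\sigma(\lambda L)$.

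Next, take logarithms and divide by $L$:
\[
\frac{\log N_\rho(L)}{L}\leq \frac{\log N_\sigma(\lambda L)-\log\alpha}{L}=\lambda\cdot\frac{\log N_\sigma(\lambda L)}{\lambda L}+\frac{\log(1/\alpha)}{L}.
\]
Since $\alpha>0$ is a fixed constant, the term $\log(1/\alpha)/L$ tends to $0$. Taking $\limsup_{L\to\infty}$ and using Definition \ref{defn:ent}, the substitution $L'=\lambda L$ gives
\[
\limsup_{L\to\infty}\frac{\log N_\sigma(\lambda L)}{\lambda L}= H(\sigma).
\]
We conclude $H(\rho)\leq\lambda H(\sigma)$.

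There is no serious obstacle here; the only points needing care are bookkeeping ones. First, the denominator in \eqref{eq:stat} should be read as counting conjugacy classes, consistent with the numerator. Second, the counts $N_\rho(L)$ must be finite for large $L$, so that the ratio makes sense; this is implicit in the hypothesis, or follows from discreteness. Third, the inequality is meaningful (and strict whenever $0<H(\sigma)<\infty$) precisely because the multiplicative loss $\alpha$ only contributes a sublinear term, $\log\alpha$, which is negligible after dividing by $L$. The genuine difficulty in applying this lemma lies elsewhere: one must verify \eqref{eq:stat} for a quasi-Hitchin $\rho$ versus $\rho_0$, which is where the strict domination along an itinerary and the equidistribution results enter.
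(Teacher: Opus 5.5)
Your proposal is correct and follows essentially the same route as the paper. Both arguments use the containment of the good classes in $\{[\gamma] : \ell_\sigma(\gamma)\leq\lambda L\}$ to obtain $\alpha N_\rho(L)\leq N_\sigma(\lambda L)$, then take logarithms, divide by $L$, and pass to the $\limsup$, where the $\log\alpha/L$ term vanishes. Your bookkeeping is slightly more careful than the paper's: you read the denominator of the ratio as a count of conjugacy classes and note that the counts must be finite for large $L$.
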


\begin{proof} The hypothesis easily implies the following variant of \eqref{eq:sub}:
\begin{equation}\label{eq:sub2} 
\alpha \cdot  \#\{\gamma \in \pi_1(S)\ \vert\ \ell_\rho(\gamma) \leq L\} \leq \# \{\gamma \in \pi_1(S)\ \vert\ l_\sigma(\gamma) \leq \lambda L\}.
\end{equation}
Taking a log on the LHS, and using Definition \ref{defn:ent}, we then obtain: 
\begin{equation*}
H(\rho) = \limsup\limits_{L\to \infty} \displaystyle\frac{\log \alpha + \log \#\{[\gamma] \ \vert\ \ell_\rho(\gamma) \leq L\}}{L}   \leq  \limsup\limits_{L\to \infty} \displaystyle\frac{\lambda \cdot \log \#\{[\gamma] \ \vert\ l_\sigma(\gamma) \leq \lambda L\}}{\lambda L}  = \lambda H(\sigma).
\end{equation*}
\end{proof}

\subsection{Entropy of quasi-Fuchsian representations}

Before we consider higher-rank representations, we prove here
Theorem \ref{thm:ent} for quasi-Fuchsian representations, which are precisely the Anosov representations from $\pi_1(S)$ to $\pslc$.  
This case admits a geometric picture for statistical domination, which we first describe.

\begin{defn}
Let $X$ be a marked hyperbolic structure on $S$, let
$\rho_0:\pi_1(S)\to\PSL(2,\mathbb R)$ be its holonomy, and identify
$\widetilde X$ with $\HH^2$. Given a representation
$\rho:\pi_1(S)\to\pslc$, a \emph{$\rho$-equivariant pleated plane}
is a continuous map  $\Psi:\widetilde X\longrightarrow \HH^3$
such that
\[
        \Psi\bigl(\gamma \cdot x\bigr)
        =\rho(\gamma)\Psi(x)
        \qquad
        \text{for every $\gamma\in\pi_1(S)$ and $x\in\widetilde X$,}
\]
and with the following properties:

(i) There is a geodesic lamination $\lambda$ on $X$ such that $\Psi$ is a totally-geodesic embedding on the closure of every complementary component of its lift $\tilde{\lambda} \subset \tilde{X}$. Such a component $P$ is called a \textit{plaque} and its $\Psi$-image lies on a totally-geodesic copy of the hyperbolic plane $H_P \subset \HH^3$, called the \emph{support plane} of $P$.

(ii) $\Psi$ is path-isometric: for every rectifiable arc
$a\subset\widetilde X$, $\ell_{\HH^3}(\Psi(a))
        =\ell_{X}(a)$, 
where the length on the left is the length of the parametrized
image. The smallest geodesic lamination for which these properties
hold is the \emph{pleating locus}.

(iii) The pleating locus is equipped with a transverse measure recording
the bending angles. More precisely, if a leaf $\ell\subset \widetilde\lambda$, of weight
$\theta\in(0,\pi]$ separates two plaques $P_-$ and $P_+$, the support plane $H_{P_+}$ is obtained from
$H_{P_-}$ by an elliptic rotation about the geodesic
$\Psi(\ell)$ through angle $\theta$; the direction of rotation is
determined by the chosen orientations.

More generally, let $a$ be an oriented arc transverse to
$\widetilde\lambda$, with endpoints in plaques $P$ and $Q$.
Approximating the measured lamination in a neighborhood of $a$ by
finitely many weighted leaves, one may compose, in their order along
$a$, the rotations through the corresponding bending angles about
the successive images of their leaves. The convergence theorem for
quakebend cocycles (see
\cite[\S\S~3.4--3.6]{EpsteinMarden}, and also
\cite[\S~8]{BonShear}) implies that these ordered products converge in
$\pslc$ to an isometry  $B_{\widetilde\lambda}(P,Q)\in\pslc$. 
The limit is independent of the finite approximation and satisfies
\begin{equation*} 
    B_{\widetilde\lambda}(P,R)
        =
                B_{\widetilde\lambda}(P,Q) 
        B_{\widetilde\lambda}(Q,R)
\end{equation*} 
where the composition is from left to right. This $B_{\widetilde\lambda}$ is the
\emph{bending cocycle} (see also \cite[\S5.3]{Dumas}). 
The rotation-angle component of the bending defines a transverse measure  on $\widetilde\lambda$; from the equivariance it descends to a measured
geodesic lamination $\lambda$ on $X$ that we call the \textit{bending lamination}. Moreover, 
the \emph{straightening} of $\Psi$ is obtained by replacing the bending cocycle by the trivial cocycle; its intrinsic hyperbolic structure is $X$.
\end{defn}

We record the following more general existence result, which is an immediate consequence of the monodromy theorem of Gallo--Kapovich--Marden and Thurston's description of complex projective structures.

\begin{lem}
If $\rho:\pi_1(S)\to\pslc$ is a (not necessarily discrete) non-elementary representation that
lifts to $\slc$, then there are a hyperbolic structure $X$ on $S$
and a $\rho$-equivariant pleated plane $\Psi:\widetilde X\longrightarrow\HH^3$ that defines a bending lamination
$\lambda$ on $X$. Consequently, $\rho$ lies in the bending fiber of the Fuchsian representation $\rho_0$ corresponding to $X$. 
\end{lem}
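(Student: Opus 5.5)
The plan is to pass through complex projective structures. The lemma says any non-elementary $\rho$ that lifts to $\slc$ lies in a bending fiber, and the natural source of pleated planes is Thurston's grafting description of $\cp$-structures.

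\textbf{Step 1: realize $\rho$ as a holonomy.} By the monodromy theorem of Gallo--Kapovich--Marden, a non-elementary representation $\rho:\pi_1(S)\to\pslc$ that lifts to $\slc$ is the holonomy of some complex projective ($\cp$-) structure $Z$ on $S$. This is exactly where both hypotheses are used: non-elementarity, and vanishing of the second Stiefel--Whitney obstruction, i.e. the existence of a lift. We take such a $Z$ with developing map $\mathrm{dev}:\tilde S\to\cp$ equivariant with respect to $\rho$.

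\textbf{Step 2: apply Thurston's parametrization.} By Thurston's theorem, $\cp$-structures correspond bijectively to pairs $(X,\lambda)\in\mathcal T(S)\times\mathcal{ML}(S)$ via grafting, $Z=\mathrm{Gr}_\lambda X$. The key geometric output is the \emph{Thurston (Kulkarni--Pinkall) pleated plane}. Take the canonical stratification of $\tilde Z$ by maximal round disks, and assign to each maximal disk the convex hull of its boundary circle in $\HH^3=\partial^{-1}\cp$. This gives a locally convex pleated surface $\Psi:\widetilde X\to\HH^3$ with the following properties:
\begin{enumerate}
\item its intrinsic hyperbolic metric is $X$;
\item its bending lamination is the measured lamination $\lambda$;
\item it is $\rho$-equivariant, because the construction is natural with respect to $\mathrm{dev}$.
\end{enumerate}
Path-isometry holds because the intrinsic metric of such a pleated surface is hyperbolic. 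Each plaque is totally geodesic, and the bending across leaves is given by the bending cocycle $B_{\widetilde\lambda}$ built from the measure. This verifies conditions (i)--(iii) of the definition.

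\textbf{Step 3: straighten.} Replace $B_{\widetilde\lambda}$ by the trivial cocycle. The straightened map is an isometric embedding $\widetilde X\to\HH^2\subset\HH^3$, equivariant for the Fuchsian holonomy $\rho_0$ of $X$. Comparing holonomies plaque by plaque, $\rho(\gamma)$ is $\rho_0(\gamma)$ twisted by the bending cocycle. Extending $\lambda$ to a maximal lamination by adding leaves of zero bending, Bonahon's shear-bend coordinates (Theorem~\ref{thm:bon}) show the following. The real part of the cocycle of $\rho$ equals the shear cocycle of $X$, since the shears are intrinsic to the path-isometric plane. The imaginary part is the bending measure (mod $2\pi$). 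Hence $\rho$ lies in the bending fiber of $\rho_0$.

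\textbf{Main obstacle.} The delicate step is this last identification. We must check that the Bonahon cocycle of $\rho$ relative to the completed maximal lamination has real part exactly the shear cocycle of $X$. This amounts to showing that added leaves carrying zero bending contribute no imaginary part, and that transversality and pleatedness hold along all leaves. I would handle it by invoking Bonahon's construction in \cite[\S8]{BonShear}, which builds shear-bend cocycles precisely from such pleated surfaces.
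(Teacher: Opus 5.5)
Your proposal is correct and follows essentially the same route as the paper: Gallo--Kapovich--Marden gives a $\mathrm{CP}^1$-structure with holonomy $\rho$, Thurston's grafting theorem gives $(X,\lambda)$, and from this you get the $\rho$-equivariant pleated plane. The paper builds that plane as the limit of finite bendings and mentions the maximal-disk envelope (the description you use) as equivalent; your Bonahon shear-bend argument spells out the ``consequently'' clause that the paper leaves implicit. The one point to tidy is that, under the paper's definition, the geometric bending lamination comes from the grafting lamination by replacing atomic weights with their effective angles (dropping leaves with weight in $2\pi\mathbb{Z}$); your reduction mod $2\pi$ in Step 3 already accounts for this.
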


\begin{proof}
By the result of Gallo--Kapovich--Marden
\cite[Theorem~1]{GKM}, there is a complex projective structure
$Z$ on $S$ whose holonomy is $\rho$. Thurston's grafting theorem
(see, for example, \cite{Dumas}) associates to $Z$ a unique pair
\[
        (X,\lambda)
        \in\mathcal T(S)\times\mathcal{ML}(S)
\]
such that $Z=\operatorname{Gr}_\lambda(X)$ where $\operatorname{Gr}_\lambda$ is the grafting map. 

Passing to the universal cover, the grafting construction has a
dual bending description. For a finite weighted lamination, one
starts with a totally-geodesic copy of $\widetilde X=\HH^2$ in
$\HH^3$, cuts along the lifts of the weighted leaves, and rotates
successive plaques about the corresponding geodesic lines through
the prescribed angles. For a general measured lamination, the map
is obtained as the locally uniform limit of the maps associated
with finite weighted approximations. The resulting map $\Psi:\widetilde X\longrightarrow\HH^3$ 
is continuous and path-isometric, is totally geodesic on the complementary 
plaques of $
\widetilde\lambda$, and is equivariant for the
holonomy of the grafted projective structure. Since the latter
holonomy is $\rho$, the map $\Psi$ is $\rho$-equivariant.

Equivalently, one may obtain the same pleated plane from the
maximal round disks in the image of the developing map of the projective structure. Each such
round disk has an ideal boundary circle in $\cp$, and hence
determines a totally-geodesic plane in $\HH^3$; the envelope of
these planes is the image of $\Psi$. The changes of the support planes determine the bending cocycle and its
transverse measure. After replacing the atomic weights in
$\lambda_{\mathrm{gr}}$ by their effective geometric bending
angles, and omitting those leaves on which the effective angle is
zero, this measure descends to the required measured geodesic
lamination $\lambda$ on $X$.
\end{proof}

\noindent  \textit{Remark.} When $\rho$ is quasi-Fuchsian as in this section, the above ``bending" description is easier; in that case the limit set (the image of the limit map) is a quasi-circle on $\mathbb{C}P^1$, and its convex hull in $\mathbb{H}^3$ is bounded by two $\rho$-equivariant pleated planes. 

\medskip

To prove statistical domination, we shall need the following
consequence of the geometry of the pleated plane. In the next two lemmas we shall assume that any closed leaf of the bending lamination $\lambda$ has weight in $(0,2\pi)$. 

\begin{lem}\label{lem:defect0}
In the above set-up, let $\gamma$ be a closed geodesic on
$X$ that transversely intersects the bending lamination  $\lambda$, with positive transverse measure. Then
\begin{equation*}
        \ell_\rho(\gamma)\leq l_X(\gamma)-\delta
\end{equation*} 
for some $\delta>0$. Here $\delta$ may depend on $\gamma$ and on
the chosen pleated plane, and hence on
$\gamma$, $X$, and the bending data $\lambda$.
\end{lem}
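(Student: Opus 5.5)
We will argue geometrically with the $\rho$-equivariant pleated plane $\Psi:\widetilde X\to\HH^3$. Fix a lift $\widetilde\gamma\subset\widetilde X$ of the closed geodesic $\gamma$, a point $x\in\widetilde\gamma$ lying in the interior of a plaque (which we may assume, perturbing $x$ along $\widetilde\gamma$), and the fundamental segment $a=[x,\gamma x]\subset\widetilde\gamma$. Since $\Psi$ is path-isometric, the curve $\Psi(a)$ has length $l_X(\gamma)$ and joins $\Psi(x)$ to $\rho(\gamma)\Psi(x)$. Hence $d_{\HH^3}(\Psi(x),\rho(\gamma)^k\Psi(x))\le k\,l_X(\gamma)$ for every $k$. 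The translation length satisfies $\ell_\rho(\gamma)=\lim_k \frac1k d(\Psi(x),\rho(\gamma)^k\Psi(x))$. So the point is to show that the concatenated path $\Psi(\widetilde\gamma)$ fails to be a geodesic by a definite amount in each period. It is not enough to know that it is not a geodesic.

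The key steps are as follows. First, because $\gamma$ meets $\lambda$ with positive transverse measure and closed leaves have weight in $(0,2\pi)$, the segment $a$ crosses bending of positive total effect. Concretely, we can find a subarc $a'\subset a$ with endpoints $p,q$ in plaques $P,Q$ such that the bending cocycle $B_{\widetilde\lambda}(P,Q)$ is a nontrivial rotation effect, and the broken curve $\Psi(a')$ is not a geodesic segment. For a single atomic leaf of angle $\theta\in(0,2\pi)$ this is clear: $\Psi(a')$ is two geodesic segments meeting at an angle different from $\pi$. For the general case, we approximate by finite bending as in the convergence theorem for quakebend cocycles \cite{EpsteinMarden}, and use that $\Psi$ is continuous. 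Then the strict triangle inequality gives
\[
d(\Psi(p),\Psi(q))\le l_X(a')-\delta_0
\]
for some $\delta_0>0$.

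Second, we pass from a single period to the asymptotic translation length without losing the defect. We apply the first step inside each translate $\gamma^j a$. By equivariance each translate produces the same defect $\delta_0$. Then we replace $\Psi(\widetilde\gamma)$ on $[x,\gamma^k x]$ by the piecewise path that uses geodesic shortcuts on each $\gamma^j a'$. This gives
\[
d(\Psi(x),\rho(\gamma)^k\Psi(x))\le k\,(l_X(\gamma)-\delta_0).
\]
Dividing by $k$ yields $\ell_\rho(\gamma)\le l_X(\gamma)-\delta_0$, and we set $\delta=\delta_0$.

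We expect the main obstacle to be the first step, namely extracting a quantitative non-geodesicity from a bending measure that may have no atoms (for example, a Cantor set of leaves). Our first approach would be a contradiction argument. If $\Psi(a')$ were a geodesic segment for every subarc, then the support planes of all plaques met by $a$ would contain a common geodesic line, namely the image of $\widetilde\gamma$. Each bending rotation is about a leaf image $\Psi(\ell)$ transverse to that line. A nontrivial rotation about an axis crossing the line at a single point cannot keep a support plane containing the line unless the rotation angle lies in $\{0,\pi\}$ mod $2\pi$, and then only in degenerate configurations. Positivity of the transverse measure, together with small angles for the non-atomic part, rules this out. Hence some subarc is not geodesic, and compactness of $a'$ gives $\delta_0>0$.
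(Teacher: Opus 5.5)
Your reduction is the same as the paper's. Path-isometry makes $\Psi(a)$ a curve of length $l_X(\gamma)$ from $\Psi(x)$ to $\rho(\gamma)\Psi(x)$, so everything rests on showing $\Psi(a)$ is not a geodesic segment. Your worry that non-geodesicity on one period ``is not enough'' is unfounded. Translation length is the infimal displacement, so $\ell_\rho(\gamma)\le d_{\HH^3}(\Psi(x),\rho(\gamma)\Psi(x))<l_X(\gamma)$ as soon as $\Psi(a)$ is not a geodesic, and you may take $\delta$ to be the difference; this is exactly how the paper concludes. Your $k$-fold shortcut argument and the appeal to compactness are correct but redundant.

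The genuine gap is the non-atomic case, which you rightly call the crux but do not prove. Your contradiction argument uses a fact about a \emph{single} rotation about a line $\Psi(\ell)$ crossing $L$: such a rotation cannot carry a plane containing $L$ to another plane containing $L$. When the bending measure on $a$ is diffuse, no individual leaf carries a rotation at all. The bending between any two plaques met by $a$ is a limit of ordered products of many small rotations about different axes. Nothing you say prevents such a product from carrying $H_P$ to another plane through $L$. Ruling that out is exactly where positivity of the measure must enter, and your sentence ``positivity \dots together with small angles \dots rules this out'' asserts it rather than proves it. Support-plane containment is also the wrong invariant: it is necessary but not sufficient for straightness, as an atom of angle $\pi$ shows (the plane is unchanged while the curve folds back).

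The paper instead tracks the turning of the curve $\Psi(a)$ itself. A crossing at angle $\alpha$ of a leaf bent by $\theta$ turns the curve by $\varphi$, where $\sin(\varphi/2)=\sin\alpha\sin(\theta/2)$ \cite[Lemma~3.2]{GSu}. The infinitesimal form $d\varphi=\sin\alpha\,d\widetilde\lambda$, together with $\sin\alpha>0$ and positivity of the measure, shows that the turning measure of $\Psi(a)$ is nonzero.

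To make your small-angle idea work you need a first-order estimate of this kind. One way:
\begin{enumerate}
\item Choose a short subarc $a'$ with endpoints in plaques and bending mass $\beta(a')>0$.
\item Disjoint leaves crossing $a'$ are nearly parallel there. Hence the bending cocycle across $a'$ equals a rotation by $\beta(a')$ about one fixed leaf crossing $L$ at angle $\alpha$, up to an error $O(|a'|\,\beta(a'))$.
\item Straightness of $\Psi(a')$ would force this cocycle to fix $L$ pointwise. But that rotation moves the direction of $L$ at the crossing point by an angle of order $\sin\alpha\,\beta(a')$, which dominates the error for $|a'|$ small, giving the contradiction.
\end{enumerate}
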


\begin{proof}
Let
$a=[u,v]\subset\widetilde X$ be a geodesic segment transverse to
$\widetilde\lambda$. We claim that
\begin{equation}\label{eq:strict-chord}
        d_{\HH^3}\bigl(\Psi(u),\Psi(v)\bigr)
        <
        d_{\HH^2}(u,v).
\end{equation}

Indeed, since $\Psi$ is path-isometric, the curve
$c=\Psi|_a$ has length $ \ell_{\HH^3}(c) =d_{\HH^2}(u,v)$. 
It therefore suffices to show that $c$ is not a geodesic segment
in $\HH^3$.

For a finite weighted bending lamination, $c$ is a piecewise
geodesic. Suppose that $a$ crosses a leaf of bending weight
$\theta\in(0,\pi]$ at an angle $\alpha\in(0,\pi)$. Let $\beta$ be
the interior angle of the bent path at the crossing point, and let $  \varphi=\pi-\beta$ 
be its turning angle. The calculation in
\cite[Lemma~3.2]{GSu} gives
\[
        1+\cos\beta
        =
        2\sin^2(\alpha)\sin^2\left(\frac{\theta}{2}\right).
\]
Since
\[
        1+\cos\beta
        =
        1-\cos\varphi
        =
        2\sin^2\left(\frac{\varphi}{2}\right),
\]
we obtain
\begin{equation}\label{eq:turning-angle}
        \sin\left(\frac{\varphi}{2}\right)
        =
        \sin(\alpha)
        \sin\left(\frac{\theta}{2}\right).
\end{equation}
Thus every transverse crossing of a leaf with non-zero effective
bending angle contributes a non-zero turning angle.

For a general measured  lamination, one can approximate its
restriction to $a$ by finite weighted laminations, as in the
definition of the bending cocycle. The arc $c$ is then a limit of these finite-bendings, and the total turning angle of the unit tangent field along  $c$ is a limit of the turning angles for these. The
atoms of this measure are given by
\eqref{eq:turning-angle}; on the non-atomic part, the infinitesimal
form of the same identity is
\[
        d\varphi
        =
        \sin(\alpha)\,d\widetilde\lambda,
\]
where $\alpha$ is the intrinsic angle between $a$ and the
corresponding leaf. Since $a$ is transverse to
$\widetilde\lambda$, we have $\sin(\alpha)>0$ at every
intersection point, and the total turning measure of $c$ is non-zero. Hence $c$ is not a geodesic segment.

Finally, $\HH^3$ is uniquely geodesic, and a rectifiable curve
realizes the distance between its endpoints only when it is the
geodesic segment between them. Since $c$
is not such a segment, its chord is strictly shorter than its
length, proving  \eqref{eq:strict-chord}. Applying  this to $\gamma$ we obtain:
\begin{equation*}
    \ell_\rho(\gamma) \leq    d_{\HH^3}\bigl(\Psi(x),\rho(\gamma)\cdot \Psi(x)\bigr)  < d_{\HH^2}\bigl(x ,\gamma \cdot x\bigr) =  l_X(\gamma)
\end{equation*}        
and hence $\ell_\rho(\gamma)\leq l_X(\gamma)-\delta$ for some $\delta>0$ as required.
\end{proof}

Refining the proof above, we obtain a defect function that records
the loss of distance between compact pieces of plaques. Fix a complete train track $\tau$ carrying $\lambda$, and
let $N(\tau)$ be a sufficiently small open neighborhood of
$\tau$. Here one may enlarge an arbitrary train track carrying
$\lambda$ to a complete one, assigning zero transverse weight to
the additional branches. Let $\widetilde N(\tau)$ be the full lift
of $N(\tau)$ to $\HH^2$. A \emph{truncated plaque} is a connected
component of the complement $\HH^2\setminus\widetilde N(\tau)$; each is 
a compact set contained in a unique
plaque of $\HH^2\setminus\widetilde\lambda$. We denote the
collection of truncated plaques by $\mathcal P$.

We shall say that a truncated plaque $B$ lies \emph{between}
truncated plaques $A$ and $C$ if
\[
        [x,z]\cap B\neq\varnothing
        \qquad
        \text{for every $x\in A$ and $z\in C$,}
\]
where $[x,z]$ denotes the geodesic segment in $\HH^2$ joining
$x$ to $z$.

\begin{lem}\label{lem:defect}
There is a ``defect" function
\[
        \delta:\mathcal P\times\mathcal P
        \longrightarrow\mathbb R_{\geq 0}
\]
with the following property. For $A,B\in\mathcal P$, $x\in A$,
and $y\in B$, if $x'=\Psi(x)$ and $y'=\Psi(y)$ then
\begin{equation}\label{eq:plaque-defect}
        d_{\HH^3}(x',y')
        \leq
        d_{\HH^2}(x,y)-\delta(A,B).
\end{equation}
Moreover:
\begin{itemize}
\item[(i)] $\delta(A,A)=0$ for every $A\in\mathcal P$;

\item[(ii)] if $B$ lies between $A$ and $C$, then
\[
        \delta(A,C)
        \geq
        \delta(A,B)+\delta(B,C);
\]

\item[(iii)] for every $\gamma\in\pi_1(S)$,
\[
        \delta(\gamma\cdot A,\gamma\cdot B)
        =
        \delta(A,B).
\]
\end{itemize}
In addition, $\delta(A,B)>0$ whenever $A$ and $B$ are contained
in distinct plaques of
$\HH^2\setminus\widetilde\lambda$. If they are contained in the
same plaque, then $\delta(A,B)=0$.
\end{lem}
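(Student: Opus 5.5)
The natural choice is to take $\delta$ to be the worst-case loss of distance:
\[
\delta(A,B):=\inf_{x\in A,\ y\in B}\Bigl(d_{\HH^2}(x,y)-d_{\HH^3}\bigl(\Psi(x),\Psi(y)\bigr)\Bigr).
\]
With this definition, inequality \eqref{eq:plaque-defect} holds automatically. Since $\Psi$ is path-isometric, $d_{\HH^3}(\Psi(x),\Psi(y))\leq d_{\HH^2}(x,y)$, so $\delta\geq 0$. Because $A$ and $B$ are compact and $\Psi$ is continuous, the infimum is attained. The remaining work is to verify properties (i)--(iii) and the positivity and vanishing statements.

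Property (iii) comes from equivariance. We have $\Psi(\gamma x)=\rho(\gamma)\Psi(x)$, and both $\rho(\gamma)$ and $\gamma$ act by isometries, so the function being minimized is invariant. Property (i) holds because a truncated plaque lies in a single plaque $P$, where $\Psi$ is a totally geodesic isometric embedding; there the defect is identically zero. The same argument shows $\delta(A,B)=0$ whenever $A$ and $B$ lie in the same plaque.

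For positivity, suppose $A$ and $B$ lie in distinct plaques. For any $x\in A$ and $y\in B$, the segment $[x,y]$ crosses the leaves of $\widetilde\lambda$ separating the two plaques. By the strict chord inequality \eqref{eq:strict-chord} from the proof of Lemma \ref{lem:defect0}, the defect of the pair $(x,y)$ is positive, and since the minimum over the compact set $A\times B$ is attained, $\delta(A,B)>0$. There is a subtlety here: \eqref{eq:strict-chord} requires that $[x,y]$ meet the bending measure with positive mass. The separating leaves might carry zero bending, for instance on added train-track branches or on leaves of zero weight. I would handle this by taking the plaques to be those of the bending lamination, that is, its support. Then any two distinct plaques are separated by a set of positive transverse measure, since leaves in the support bound positive measure on either side. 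This is the reading in which the final assertion is true.

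The main obstacle is superadditivity (ii), and I expect it to be the hardest step. Fix $x\in A$ and $z\in C$, and let $y$ be a point of $[x,z]\cap B$, which exists because $B$ lies between $A$ and $C$. Since $y$ is on the geodesic, $d_{\HH^2}(x,z)=d_{\HH^2}(x,y)+d_{\HH^2}(y,z)$. Combining this with the triangle inequality in $\HH^3$ gives
\[
d(x,z)-d(x',z')\geq \bigl(d(x,y)-d(x',y')\bigr)+\bigl(d(y,z)-d(y',z')\bigr)-\bigl(d(x',y')+d(y',z')-d(x',z')\bigr).
\]
The last term has the wrong sign, so the infimum definition only yields $\delta(A,C)\geq\delta(A,B)+\delta(B,C)-\varepsilon$, where $\varepsilon$ measures how far $\Psi([x,z])$ is from being straight at $y'$. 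The argument therefore needs to be modified. One option is to show this error vanishes because $y$ lies inside a totally geodesic plaque. The option I would try first is to redefine $\delta$ intrinsically, via the turning measure of the bent path. Concretely, I would set $\delta(A,B)$ to be a lower bound on the chord loss that is additive along the path. For example, one could take the infimum over $x,y$ of a quantity $F$ of the total turning of $\Psi([x,y])$, where the turning is computed by \eqref{eq:turning-angle}. Using the $\mathrm{CAT}(-1)$ comparison, I would then prove that the chord loss dominates an additive function of the turning angles at separated pieces. Turning mass is additive along $[x,z]$, and the part before $B$ and the part after $B$ are disjoint, so superadditivity follows, while equivariance and positivity are preserved. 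Making this comparison estimate precise is the crux of the proof.
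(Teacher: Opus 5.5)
Your definition of $\delta$ (the minimum of the distance loss over $A\times B$) is exactly the paper's. Your arguments for \eqref{eq:plaque-defect}, (i), (iii), positivity (via compactness and the strict chord inequality \eqref{eq:strict-chord}) and vanishing on a common plaque also match the paper. Your caveat about zero-weight leaves is consistent with the paper, which reads $\lambda$ as the effective bending lamination.

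The gap is in (ii), and it comes from a sign error rather than a real obstruction. Write $\mathfrak d(u,v):=d_{\HH^2}(u,v)-d_{\HH^3}(\Psi(u),\Psi(v))$. With $y\in[x,z]\cap B$ and $y'=\Psi(y)$, the exact identity is
\[
\mathfrak d(x,z)=\mathfrak d(x,y)+\mathfrak d(y,z)+\bigl(d_{\HH^3}(x',y')+d_{\HH^3}(y',z')-d_{\HH^3}(x',z')\bigr).
\]
The bracketed excess enters with a plus sign, not the minus sign in your display, and it is nonnegative by the triangle inequality in $\HH^3$. Geometrically, bending at $y'$ can only shorten the chord, so it adds to the defect rather than subtracting from it. Hence $\mathfrak d(x,z)\geq\mathfrak d(x,y)+\mathfrak d(y,z)\geq\delta(A,B)+\delta(B,C)$ for every $(x,z)\in A\times C$. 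Taking the minimum over $A\times C$ gives (ii) with no error term. This is precisely the paper's argument.

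So your redefinition of $\delta$ via the turning measure and a $\mathrm{CAT}(-1)$ comparison is unnecessary. As you left it, it is also incomplete. Under that redefinition you would have to prove that the chord loss dominates the new $\delta$, in order to retain \eqref{eq:plaque-defect}. That is the property actually used later, in \eqref{eq:accumulated-defect}, and the comparison estimate is exactly the step you did not supply.
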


\begin{proof}
For $u,v\in\HH^2$, define the distance defect
\[
        \mathfrak d(u,v)
        :=
        d_{\HH^2}(u,v)
        -
        d_{\HH^3}\bigl(\Psi(u),\Psi(v)\bigr).
\]
Since $\Psi$ is path-isometric, the image under $\Psi$ of the
geodesic segment $[u,v]$ has length $d_{\HH^2}(u,v)$. The distance
between its endpoints is therefore no greater than its length, and
hence
\[
        \mathfrak d(u,v)\geq 0.
\]
The function $\mathfrak d:\HH^2\times\HH^2\to\mathbb R_{\geq0}$
is continuous. Since every truncated plaque is compact, we may
therefore define
\begin{equation}\label{eq:def-delta}
        \delta(A,B)
        :=
        \min_{\substack{u\in A\\ v\in B}}
        \mathfrak d(u,v).
\end{equation}
The inequality \eqref{eq:plaque-defect} follows immediately from
this definition.

Suppose that $A$ and $B$ are contained in distinct plaques of
$\HH^2\setminus|\widetilde\lambda|$. Then, for every $u\in A$ and
$v\in B$, the segment $[u,v]$ meets the effective bending
lamination with positive transverse measure. By the
argument in the proof of Lemma~\ref{lem:defect0}, whose
single-leaf case is \cite[Lemma~3.2]{GSu}, we have
\[
        d_{\HH^3}\bigl(\Psi(u),\Psi(v)\bigr)
        <
        d_{\HH^2}(u,v).
\]
Thus $\mathfrak d$ is strictly positive on the compact set
$A\times B$, and consequently  $\delta(A,B)>0$. 
On the other hand, if $A$ and $B$ are contained in the same
plaque, then $\Psi$ restricts to an isometric totally-geodesic
embedding on the plaque containing them. Hence
\[
        \mathfrak d(u,v)=0
        \qquad
        \text{for every $u\in A$ and $v\in B$,}
\]
and therefore $\delta(A,B)=0$. In particular, this proves
property~(i).

We next prove property~(ii). Suppose that $B$ lies between $A$ and
$C$. Let $u\in A$ and $w\in C$ be arbitrary. By the definition of
betweenness, there is a point
\[
        v\in [u,w]\cap B.
\]
Since $v$ lies on the geodesic segment from $u$ to $w$, we have
\[
        d_{\HH^2}(u,w)
        =
        d_{\HH^2}(u,v)+d_{\HH^2}(v,w).
\]
The triangle inequality in $\HH^3$ gives
\[
        d_{\HH^3}\bigl(\Psi(u),\Psi(w)\bigr)
        \leq
        d_{\HH^3}\bigl(\Psi(u),\Psi(v)\bigr)
        +
        d_{\HH^3}\bigl(\Psi(v),\Psi(w)\bigr).
\]
Subtracting the latter inequality from the former equality, we
obtain
\begin{equation*}
    \mathfrak d(u,w) \geq
        \mathfrak d(u,v)+\mathfrak d(v,w)\geq
        \delta(A,B)+\delta(B,C)
\end{equation*}        
where the last inequality follows from
\eqref{eq:def-delta}. Since $u\in A$ and $w\in C$ were arbitrary,
taking the minimum over $A\times C$ yields
\[
        \delta(A,C)
        \geq
        \delta(A,B)+\delta(B,C).
\]

Finally, let $\gamma\in\pi_1(S)$. Since the deck action on
$\HH^2$ is isometric and $\Psi$ is $\rho$-equivariant, for every
$u,v\in\HH^2$ we have
\begin{align*}
        \mathfrak d(\gamma\cdot u,\gamma\cdot v)
        &=
        d_{\HH^2}(\gamma\cdot u,\gamma\cdot v)
        -
        d_{\HH^3}\bigl(
            \Psi(\gamma\cdot u),
            \Psi(\gamma\cdot v)
        \bigr)\\
        &=
        d_{\HH^2}(u,v)
        -
        d_{\HH^3}\bigl(
            \rho(\gamma)\cdot \Psi(u),
            \rho(\gamma)\cdot \Psi(v)
        \bigr)\\
        &=
        d_{\HH^2}(u,v)
        -
        d_{\HH^3}\bigl(\Psi(u),\Psi(v)\bigr)\\
        &=
        \mathfrak d(u,v).
\end{align*}
Taking infimum on both sides proves property~(iii).
\end{proof}

We shall also need the following observation, arising from the
Anosov property of the geodesic flow on $X$ and the equidistribution of closed geodesics (\cite{Bowen-eq}). 

Let $p:\HH^2\to X$ be the universal covering map. Fix truncated
plaques $A,B\in\mathcal P$, and denote their images in $X$ by
$\overline A=p(A)$ and $\overline B=p(B)$. An oriented geodesic
segment $I\subset X$ will be called an \emph{$(A,B)$-traversal} if
it admits a lift $\widetilde I\subset\HH^2$ whose initial endpoint
lies in $g\cdot A$ and whose terminal endpoint lies in $g\cdot B$,
for some $g\in\pi_1(S)$. By the equivariance property in
Lemma~\ref{lem:defect}, every such traversal has distance defect at
least $\delta(A,B)$. For a closed geodesic
$\gamma$, let $N_{A,B}(\gamma)$ denote the maximal number of
pairwise interior-disjoint $(A,B)$-traversals contained in
$\gamma$.

\begin{lem}\label{lem:lall}
For $L>0$, let $\mathcal S(L)$ be the set of closed geodesics on $X$ of length at most $L$. There exist
constants $0< \beta < 1$ and $c>0$ such that
\begin{equation}\label{eq:prop}
 \liminf_{L\to\infty}
 \frac{
   \left|
     \left\{
       \gamma\in\mathcal S(L):
       N_{A,B}(\gamma)\geq\beta\,\ell_X(\gamma)
     \right\}
   \right|
 }{
   |\mathcal S(L)|
 }
 \geq c.
\end{equation}
\end{lem}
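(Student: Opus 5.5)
Here $A,B$ are fixed truncated plaques, and the statement only needs a positive proportion $c$ of closed geodesics to contain linearly many disjoint $(A,B)$-traversals. The plan is to convert traversals into visits of the geodesic flow on $T^1X$ to a small open set, and then apply Bowen's equidistribution of closed geodesics \cite{Bowen-eq} for the Liouville measure $m$, which has full support.

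\textbf{Step 1: a flow box of traversals.} Choose $a\in A$, $b\in B$ in the interiors, a unit vector $v_0$ at $a$ pointing along $[a,b]$, and set $T_0=d_{\HH^2}(a,b)$. By continuity there is a small open neighbourhood $U\subset T^1\HH^2$ of $v_0$ with the following property: for every $v\in U$, the basepoint of $v$ lies in $A$, and the basepoint of $\phi_{T}(v)$ lies in $B$ for some $T\in[T_0-\epsilon,T_0+\epsilon]$. This is possible because truncated plaques are compact with nonempty interior. Shrink $U$ so that it embeds under the projection to $T^1X$, and call the image $\overline U$. Then $m(\overline U)>0$, and any geodesic segment starting at a vector of $\overline U$ and running for time $T_0+\epsilon$ contains an $(A,B)$-traversal.

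\textbf{Step 2: equidistribution.} Let $\mu_\gamma$ be the normalized flow-invariant probability measure supported on the closed orbit of $\gamma$. Bowen's theorem gives
\[
\frac{1}{|\mathcal S(L)|}\sum_{\gamma\in\mathcal S(L)}\mu_\gamma\to m
\]
weak-$*$. In fact the stronger statement holds that, for any $\eta>0$, the proportion of $\gamma\in\mathcal S(L)$ with $\mu_\gamma$ outside a weak-$*$ $\eta$-neighbourhood of $m$ tends to $0$. If only the averaged version is available, a Chebyshev/averaging argument suffices, since $\mu_\gamma(\overline U')\le 1$ for all $\gamma$ and this bound gives a positive proportion $c$. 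Apply this to a slightly smaller open set $\overline U'\Subset\overline U$, together with a continuous function $f$ with $1_{\overline U'}\le f\le 1_{\overline U}$. It follows that for a proportion at least $c$ of $\gamma\in\mathcal S(L)$, the fraction of time $\gamma$ spends in $\overline U$ is at least $m(\overline U')/2$.

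\textbf{Step 3: extracting disjoint traversals.} Fix $\gamma$ as in Step 2. The set of times $t\in[0,\ell_X(\gamma)]$ at which $\phi_t(\gamma)\in\overline U$ has measure at least $\frac{m(\overline U')}{2}\ell_X(\gamma)$. Select greedily times $t_1<t_2<\cdots$ in this set with $t_{j+1}\ge t_j+T_0+\epsilon$. Each chosen time yields an $(A,B)$-traversal on $[t_j,t_j+T]$, and these traversals are interior-disjoint. Each selection excludes a time-interval of length at most $T_0+\epsilon$, so the number of selections is at least
\[
\frac{m(\overline U')}{2(T_0+\epsilon)}\,\ell_X(\gamma)-1 .
\]
Taking $\beta$ slightly smaller than $\frac{m(\overline U')}{2(T_0+\epsilon)}$ absorbs the $-1$ for all sufficiently long $\gamma$. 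Only finitely many short geodesics exist, and they are negligible in proportion as $L\to\infty$. This gives \eqref{eq:prop}, and $\beta<1$ can be arranged by shrinking it.

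\textbf{Main obstacle.} I expect Step 2 to be the hardest: Bowen's equidistribution is naturally an averaged statement, whereas the lemma needs a positive proportion of individual geodesics to visit $\overline U$ a linear amount. I would first try the large-deviation form of equidistribution (Kifer), which gives that the proportion of exceptional $\gamma$ tends to zero. Failing that, I would fall back on the elementary averaging bound: if the average of $\mu_\gamma(f)$ is at least $m(f)/2$ and each $\mu_\gamma(f)\le 1$, then at least a proportion $m(f)/4$ of the $\gamma$ satisfy $\mu_\gamma(f)\ge m(f)/4$. This fallback suffices, since the statement only asks for some constant $c>0$.
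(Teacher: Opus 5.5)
Your proposal is correct and follows essentially the same route as the paper. Both arguments take a flow box $U$ around the initial vector of a segment from $\mathrm{int}(A)$ to $\mathrm{int}(B)$, apply Bowen's equidistribution to a bump function supported in $U$, and use the averaging bound (with $\mu_\gamma(f)\le 1$) to get a positive proportion of geodesics spending a definite fraction of time in $U$. Both then select separated visit times, after discarding the finitely many short geodesics; your greedy choice replaces the paper's maximal $2T$-separated subset.

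One small slip: with your fallback the time-fraction threshold is $m(f)/4$ rather than $m(\overline U')/2$. You could instead use the paper's split of average $\geq 3m/4$ against threshold $m/2$. Either way only the constants $\beta$ and $c$ change.
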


In other words, a fixed positive proportion of the closed
geodesics of length at most $L$ contain a number of pairwise
disjoint traversals from $\overline A$ to $\overline B$ which is
bounded below by a positive multiple of their own length.

\begin{proof}
Let $\phi_t:T^1X\to T^1X$ denote the geodesic flow. Choose points $x\in\operatorname{int}(A)$ and  $ y\in\operatorname{int}(B)$
and let $  \widetilde\sigma:[0,T]\longrightarrow\HH^2$
be the oriented geodesic segment from $x$ to $y$, parametrized by
arclength. Let $v\in T^1X$ be the projection of its initial unit
tangent vector.

By continuous dependence of geodesics on their initial
conditions, there is a small open neighborhood
$U\subset T^1X$ of $v$ such that, for every $w\in U$, the
geodesic segment
\[
        t\longmapsto\pi\bigl(\phi_t(w)\bigr),
        \qquad 0\leq t\leq T,
\]
is an $(A,B)$-traversal. Here
$\pi:T^1X\to X$ is the usual projection $(x,v) \mapsto x$. Indeed, after
lifting the initial point to a sufficiently small neighborhood of
$x$, the endpoint at time $T$ lies in a neighborhood of $y$
contained in $\operatorname{int}(B)$.

Let $\mu$ be the normalized Liouville measure on $T^1X$. Choose
a continuous function  $f:T^1X\longrightarrow[0,1]$ which is not identically zero and whose support is contained in
$U$. Since $\mu$ has full support,
\[
        m:=\int_{T^1X}f\,d\mu>0.
\]
For each closed geodesic $\gamma$, let
$\nu_\gamma$ be the invariant probability measure supported on
the corresponding closed orbit:
\[
        \int_{T^1X}F\,d\nu_\gamma
        =
        \frac{1}{l_X(\gamma)}
        \int_0^{l_X(\gamma)}
        F\bigl(\phi_t(v_\gamma)\bigr)\,dt,
\]
where $v_\gamma$ is any unit tangent vector tangent to $\gamma$. The equidistribution of closed geodesics (by the main result of \cite{Bowen-eq}) then implies 
\begin{equation}\label{eq:bowen-average}
        \frac{1}{|\mathcal S(L)|}
        \sum_{\gamma\in\mathcal S(L)}
        \nu_\gamma
        \xrightarrow{\;\ast\;}
        \mu
        \qquad\text{as }L\to\infty.
\end{equation}
Set
\[
        X_\gamma:=\int_{T^1X}f\,d\nu_\gamma\in[0,1].
\]
It follows from \eqref{eq:bowen-average} that
\[
        \frac{1}{|\mathcal S(L)|}
        \sum_{\gamma\in\mathcal S(L)}X_\gamma
        \longrightarrow m.
\]
In particular, for all sufficiently large $L$, this average is
at least $3m/4$.

Let $p_L$ be the proportion of geodesics
$\gamma\in\mathcal S(L)$ for which $X_\gamma\geq\frac{m}{2}$. 
Since $0\leq X_\gamma\leq1$, we have
\begin{equation*} 
        \frac{1}{|\mathcal S(L)|}
        \sum_{\gamma\in\mathcal S(L)}X_\gamma
        \leq
        \frac{m}{2}(1-p_L)+p_L =
        \frac{m}{2}
        +
        p_L\left(1-\frac{m}{2}\right).
\end{equation*} 
Consequently,
\begin{equation}\label{eq:positive-proportion-visits}
        p_L\geq
        p_0:=
        \frac{m/4}{1-m/2}>0
\end{equation}
for all sufficiently large $L$.

There are only finitely many closed geodesics of length
at most $4T$. Since $|\mathcal S(L)|\to\infty$, after increasing
the lower bound on $L$ we may therefore assume that at least a
proportion $p_0/2$ of the geodesics in $\mathcal S(L)$ satisfy
both
\begin{equation}\label{eq:good-orbit}
        X_\gamma\geq\frac{m}{2}
        \qquad\text{and}\qquad
        l_X(\gamma)>4T.
\end{equation}

Fix such a geodesic $\gamma$, and let $ \ell=l_X(\gamma)$.
Consider the set of times
\[
        E_\gamma
        =
        \left\{
          t\in\mathbb R/\ell\mathbb Z:
          \phi_t(v_\gamma)\in U
        \right\}.
\]
Since $0\leq f\leq1$ and $\operatorname{supp}(f)\subset U$,
\eqref{eq:good-orbit} gives
\begin{equation}  \label{eq:measure-E}
        |E_\gamma|
        \geq
        \int_0^\ell
        f\bigl(\phi_t(v_\gamma)\bigr)\,dt \notag
        =
        X_\gamma\,\ell
        \geq
        \frac{m}{2}\ell.
\end{equation}

Choose a maximal subset $ \{t_1,\ldots,t_N\}\subset E_\gamma$
whose elements have pairwise circular distance at least $2T$.
By maximality, the circular intervals of radius $2T$ centered at
the $t_j$ cover $E_\gamma$. Hence
\[
        |E_\gamma|\leq4TN,
\]
and therefore, by \eqref{eq:measure-E},
\begin{equation}\label{eq:number-traversals}
        N\geq
        \frac{|E_\gamma|}{4T}
        \geq
        \frac{m}{8T}\ell.
\end{equation}
For each $j$, the orbit segment $  t\mapsto
        \pi\bigl(\phi_{t_j+t}(v_\gamma)\bigr)$  for $0\leq t\leq T$ 
is an $(A,B)$-traversal. The separation of the times $t_j$
ensures that these traversals have pairwise disjoint interiors. Defining 
\[
        \beta
        :=
        \min\left\{
          \frac{m}{8T},\frac12
        \right\}>0,
\]
we conclude that
\[
        N_{A,B}(\gamma)\geq\beta\,l_X(\gamma)
\]
for at least a proportion $p_0/2$ of the geodesics in
$\mathcal S(L)$, for all sufficiently large $L$. Thus
\eqref{eq:prop} holds with $c=p_0/2$.
\end{proof}

We can now conclude the $\pslc$-case of Theorem \ref{thm:ent}, that is, for quasi-Fuchsian representations. Recall from the Introduction that in this case the definitions of Hilbert length and translation length coincide, and for quasi-Fuchsian representations, the entropy (which in Definition \ref{defn:ent}  is known to be a limit -- see, for example, \cite{Roblin}) coincides with the Hausdorff dimension of the limit set.

\begin{proof}[Proof of Theorem \ref{thm:ent} (Entropy Rigidity) for $\pslc$]
With the normalization used here, the Hilbert length
$\ell^H_\rho(\gamma)$ is the translation length of
$\rho(\gamma)$ in $\HH^3$. In other words, if $\rho_0$ is Fuchsian with
associated hyperbolic surface $X$, then $\ell^H_{\rho_0}(\gamma)=l_X(\gamma) = \ell_\rho(\gamma)$. 

Let $ \Psi:\HH^2\longrightarrow\HH^3$
be the $\rho$-equivariant pleated plane, and let $   X=\HH^2/\rho_0\bigl(\pi_1(S)\bigr)$ 
be its straightening. Since $\Psi$ is path-isometric, it is
$1$-Lipschitz with respect to the distances between endpoints. If
$x$ lies on the axis in $\HH^2$ of a nontrivial
$\gamma\in\pi_1(S)$, we have 
\begin{align*}
        \ell_\rho(\gamma)
        &\leq
        d_{\HH^3}\bigl(\Psi(x),\rho(\gamma)\cdot \Psi(x)\bigr)\\
        &=
        d_{\HH^3}\bigl(\Psi(x),\Psi(\gamma\cdot x)\bigr)\\
        &\leq
        d_{\HH^2}(x,\gamma\cdot x)
        =
        l_X(\gamma)
        =
        \ell_{\rho_0}(\gamma).
\end{align*}
By Lemma~\ref{lem:ent-dom} we then have $H(\rho)\geq H(\rho_0)$. Note that $H(\rho)$ is well-defined (and finite) if $\rho$ is quasi-Fuchsian. 

When $\rho$ is quasi-Fuchsian but not Fuchsian, its bending lamination $\lambda \neq \emptyset$. Moreover, $\lambda$ can be chosen such that no component is a weighted closed curve with weight $2\pi$ or more; thus Lemma \ref{lem:defect} applies. 
Choose truncated plaques $A,B\in\mathcal P$
which are contained in distinct plaques of
$\HH^2\setminus\widetilde\lambda$. By
Lemma~\ref{lem:defect}, the defect $ \delta_0:=\delta(A,B)>0$. 

Applying Lemma~\ref{lem:lall} to this pair, there are constants
$\beta,c>0$ such that, for all sufficiently large $L$, at least a
proportion $c$ of the geodesics $\gamma\in\mathcal S(L)$ contain
at least $\beta\,l_X(\gamma)$ pairwise interior-disjoint
$(A,B)$-traversals.

We first record how the defects of these traversals add up when there are multiple traversals.
Let $\gamma$ be a closed geodesic containing $m>0$ pairwise
interior-disjoint $(A,B)$-traversals. Parametrize $\gamma$ by
arclength and lift one period to a segment
\[
        \widetilde\gamma:[0,l_X(\gamma)]\longrightarrow\HH^2
\]
such that
\[
        \widetilde\gamma\bigl(l_X(\gamma)\bigr)
        =
        \gamma\cdot\widetilde\gamma(0).
\]
After changing the initial point if necessary, the chosen
traversals lift to pairwise interior-disjoint subsegments of this
fundamental segment. On each such subsegment, Lemma
\ref{lem:defect} implies  a defect of at least $\delta_0$ between intrinsic length and the
distance between the images of its endpoints. On each of the
remaining subsegments, path-isometry gives the corresponding
non-strict inequality. Applying the triangle inequality in
$\HH^3$ at the successive endpoints therefore yields
\begin{equation}\label{eq:accumulated-defect}
        d_{\HH^3}\bigl(
          \Psi(\widetilde\gamma(0)),
          \Psi(\widetilde\gamma(l_X(\gamma)))
        \bigr)
        \leq
        l_X(\gamma)-m\delta_0.
\end{equation}
Equivariance and the definition of translation length then imply
\begin{equation}\label{eq:closed-defect}
        \ell_\rho(\gamma)
        \leq
        l_X(\gamma)-m\delta_0.
\end{equation}
In particular, this refines the strict inequality supplied by
Lemma~\ref{lem:defect0}.

Since by Lemma \ref{lem:lall} we have $k\geq \beta l_X(\gamma)$, we then obtain 
\begin{equation} \label{eq:ineq}
        \ell_\rho(\gamma) \leq 
        \bigl(1-\beta \delta_0 \bigr)l_X(\gamma) = 
        \bigl(1-\beta \delta_0\bigr) \ell_{\rho_0}(\gamma).
\end{equation} 
Replacing $\beta$ by $\min \{\beta, \frac{1}{2\delta_0} \}$, we can ensure 
\begin{equation*}
        \nu:=1-\beta \delta_0\in(0,1).
\end{equation*}

From Lemma~\ref{lem:lall} and \eqref{eq:ineq} we then obtain a constant 
$\alpha>0$ such that, for all sufficiently large $L$, we have
\begin{equation*} 
        \frac{
          \#\bigl\{
             \gamma\in\mathcal S(L)\  \vert\  
             \ell_\rho(\gamma)
             \leq\nu\ell_{\rho_0}(\gamma)
          \bigr\}
        }{
          \lvert\mathcal S(L)\rvert
        }
        \geq\alpha.
\end{equation*}
This is precisely the statistical domination condition
\eqref{eq:stat}, with $\sigma:=\rho$ and $\rho := \rho_0$ in the statement of Lemma~\ref{lem:stat}, which then implies $H(\rho)>H(\rho_0)$. 
\end{proof}

\noindent \textit{Remark.} In fact, we conjecture that  for the one-parameter  family of representations $\rho_t:\pi_1(S)\to \pslc$ obtained by bending a Fuchsian representation $\rho_0$ along a measured geodesic lamination $t\lambda$,  the entropy $H(\rho_t)$ increases monotonically as $t$ increases (from $0$ to the first value $t_0$ where $\rho_{t_0}$ is not quasi-Fuchsian). This shall be addressed in forthcoming work; see also the recent work of Bridgeman-Canary-Sambarino in \cite{BCS26} for related results. 

\subsection{Punctured surface case: strict matrix domination}

In the case of surface-group representations into $\pslnc$ where $n>2$, we do not have the above geometric picture since there is no good analogue of a pleated plane in the associated symmetric space $\mathbb{X}_n$. However, the algebraic setup discussed in \S3  will be used to provide a  statistical domination result (see Proposition \ref{prop:stat}). We start with the following algebraic notion of (strict) matrix domination:

\begin{defn}[Strict matrix domination] A $n\times n$ matrix $A$ with complex entries is said to be strictly dominated by an $n\times n$ matrix $B$ with real and positive entries if  there exists $0<\alpha <1$ (referred to as the \textit{dominating factor}) such that $\lvert a_{ij}\rvert \leq \alpha  \cdot b_{ij}$ for all $1\leq i,j\leq n$.  We shall denote this by $\lvert A \rvert < B$. 
\end{defn}

Consider the case when the surface $S$ has punctures; $\rho:\pi_1(S) \to \pslnc$ is a representation with Fock-Goncharov coordinates (with respect to some ideal triangulation) and $\rho_0:\pi_1(S) \to \pslnr$ is the positive representation obtained by replacing each such coordinate with its modulus. 

In what follows, the ``planar network associated with the matrix $M$" refers to the planar network whose weight matrix is $M$. In the case of a punctured surface $S$, when the monodromy matrix is a product of ``building block" matrices as in \eqref{decomp}, a planar network associated with it is constructed by concatenating the planar networks for building blocks obtained in \cite{BGup} (see \S2.5).

We shall use one  more structural result about these planar networks. For the next lemma, let $\mathbf t$ be the collection of triangle coordinates of one ideal triangle and let $\mathbf e=(e_1,\ldots,e_{n-1})$ be the edge coordinates on one chosen side.  

\begin{lem}\label{lem:coordinate-detecting}
Fix $c\in\mathbf t\cup\mathbf e$ and $1\leq r<n$.  In the planar network associated with the product of two building blocks
$$T(\mathbf t)E(\mathbf e)\,\cdot\,T^{-1}E$$
there are two vertex-disjoint path families $\mathcal F_c$ and $\mathcal F_c'$ with the same $r$ sources and the same $r$ sinks such that the ratio of weights $$\frac{\wt(\mathcal F_c)}{\wt(\mathcal F_c')}=c.$$
\end{lem}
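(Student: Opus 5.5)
The plan is to exploit the explicit structure of the two networks being concatenated. In the network for $T(\mathbf t)E(\mathbf e)$, built as $M(\mathbf t)$ followed by the diagonal factor $SE(\mathbf e)$, each coordinate $c$ appears as the weight of some specific edge. If $c$ is an edge coordinate $e_j$, it enters only through the horizontal edges of the diagonal factor: the level-$r'$ edge carries a product $e_{r'}\cdots e_{n-1}$ (up to the ordering convention of \eqref{eq:Ediag}). Ratios of adjacent levels therefore isolate a single $e_j$. If $c$ is a triangle coordinate, it enters through a factor $H_i(c)$ in $M(\mathbf t)$, which multiplies the horizontal edges on levels $n-i+1,\dots,n$ by $c$. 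After commuting diagonals as in Lemma \ref{lem:Lnetw}, it also appears as a slanted edge weight $m_i$.

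The construction then proceeds in three steps.

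\textbf{Step 1: two local families.} Inside the $T(\mathbf t)E(\mathbf e)$ network, I will build two vertex-disjoint families of $r$ paths, $\mathcal G$ and $\mathcal G'$. They should start at the same $r$ sources and end at the same $r$ sinks $K$, and differ only locally, so that $\wt(\mathcal G)/\wt(\mathcal G')=c$.

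\begin{itemize}
\item Edge coordinate $c=e_j$ (the inductive form also covers products). I take $\mathcal G'$ to consist of horizontal paths on some levels $I'$. I take $\mathcal G$ to agree with $\mathcal G'$ except that one path uses a slanted edge of $M(\mathbf t)$ to change level by one, crossing into the level where $e_j$ is picked up. The slanted edge weight must be $1$ or cancel. The sink sets then differ, so I instead compare families whose sources and sinks are equal but which route through levels $\ell$ and $\ell+1$ in different orders within the $M$-part. Concretely, one family shifts up early and the other late, using the elementary networks for $F_i$, which have weight-$1$ slants. This gives two families with equal endpoints whose weights differ by a single factor of the $H$-matrix between the two slants. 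For triangle coordinates that factor is $c$ directly.
\item Edge coordinates are handled the same way: I place the level change between the two shifts so that one family traverses the diagonal factor on level $\ell$ and the other on level $\ell+1$. This forces the sinks to differ.
\end{itemize}

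\textbf{Step 2: repairing the sinks.} The mismatch in sinks is exactly what the second block $T^{-1}E$ is for. By Lemma \ref{lem:path-to-top} and the nested rising diagonals used in Lemma \ref{lem:universal-routing}, any $r$ levels can be routed vertex-disjointly to the top $r$ sinks $I_{\mathrm{top}}$. I would continue both families this way. The weights of these continuations may differ, however, so this step has to be done carefully.

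\textbf{Step 3: matching the continuations.} To control the continuation weights, I arrange that in Step 1 both families arrive at the same level set $K$ at the end of the first block. This can be done because the two families differ only by where a level change occurs, not by their final levels. For an edge coordinate, the factor is then a ratio of diagonal entries along one level before and after the change. Failing that, I choose the continuations to be the weight-$1$ distinguished paths of Lemma \ref{lem:path-to-top} on the top level, extended to nested diagonals whose weights are products of $y^{-1}$ that coincide for both families. Concatenating gives families $\mathcal F_c,\mathcal F_c'$ in the full product with the same sources, the same sinks $I_{\mathrm{top}}$, and weight ratio $c$.

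The main obstacle is the bookkeeping in Step 1. I need to find, for each triangle coordinate in the Fock--Goncharov product formula for $M(\mathbf t)$, two vertex-disjoint routings that differ by exactly one $H_i(c)$ factor and by nothing else. I would try first to use the snake-move factorization, where each small triangle contributes a pattern $F\,H(c)\,F$. In that pattern, taking the left versus the right slant around $H(c)$ on the top relevant path changes the weight by exactly $c$, with the other $r-1$ paths kept horizontal below it. The $n=3$ example $M(t)$ with entry $1+t$ is the model case, where the two paths from source $2$ to sink $3$ have weights $1$ and $t$.
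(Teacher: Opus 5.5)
Your triangle-coordinate case is essentially the paper's: a snake move inside the $TE$ network between levels $\ell,\ell+1$ with common source and sink, continued horizontally through $T^{-1}E$. (The extra $r-1$ horizontal paths should go on any levels outside $\{\ell,\ell+1\}$, not necessarily below, since there may be fewer than $r-1$ levels below.) The gap is in the edge-coordinate case, Steps 2--3.

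The first option of Step 3 cannot work. The diagonal factor $SE(\mathbf e)$ is the \emph{last} piece of the $TE$ network and has only horizontal edges. So two families that exit $TE$ on the same level set $K$ both pick up $\prod_{k\in K}e_k\cdots e_{n-1}$, and $e_\ell$ cancels; there is no ``before and after the change'' on a single level. The families must therefore exit on $K_0\cup\{\ell\}$ and $K_0\cup\{\ell+1\}$, and everything rests on reconnecting them with equal continuation weights.

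Your fallback does not achieve this. Routing all $r$ paths to $I_{\mathrm{top}}$ by nested diagonals does treat the other $r-1$ paths identically. But the differing path, if it is $s$-th from the top, rides the $s$-th rising diagonal. For $s\ge 2$ it first runs along its own level past the earlier diagonals, where the horizontal edges carry level-dependent products of $y^{-1}$'s (see Figure~\ref{fig:path-to-top}). Nothing you wrote shows these coincide for levels $\ell$ and $\ell+1$, and in general they do not. The ratio then becomes $c$ times a Laurent monomial in the triangle coordinates of the $T^{-1}E$ block. That monomial can itself be non-real, which breaks the use in Proposition~\ref{prop:punctured-strict}, where the ratio must be exactly $c\notin\R_{>0}$.

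Lemma~\ref{lem:path-to-top} makes only the \emph{topmost} continuation (to sink $n$) level-independent. Making the differing path topmost forces the other $r-1$ levels below $\ell$, i.e.\ $\ell\ge r$, which fails for small $\ell$ and large $r$.

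The missing idea is to re-route only the differing path, by a single level, at the very start of $T^{-1}E$, and to merge it with the other path rather than route to $I_{\mathrm{top}}$:
\begin{itemize}
\item Lift the level-$\ell$ path to level $\ell+1$ along the first rising diagonal until it meets the horizontal level-$(\ell+1)$ path at a common vertex $v$.
\item Continue both paths horizontally on level $\ell+1$.
\item Keep the other $r-1$ paths horizontal on levels outside $\{\ell,\ell+1\}$. The one-level climb touches only levels $\ell,\ell+1$, so vertex-disjointness is preserved.
\end{itemize}
The two prefixes up to $v$ have equal weight. Extending each by the same segment of the first diagonal from $v$ up to level $n$, followed by the top horizontal, gives the two weight-$1$ distinguished paths of Lemma~\ref{lem:path-to-top} from sources $\ell$ and $\ell+1$. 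Both families then have sources and sinks $K_0\cup\{\ell+1\}$ and weight ratio $e_\ell$, the ratio of adjacent diagonal entries of $SE(\mathbf e)$; this is what Figure~\ref{fig:edge-ratio-example} depicts. No routing to prescribed levels is needed inside this lemma; that is done by the separate routing blocks of Lemma~\ref{lem:universal-routing} in Proposition~\ref{prop:punctured-strict}.
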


\begin{proof}
If $c$ is a triangle parameter, choose the two paths in the $TE$ network which differ by a snake move (see Figure \ref{fig:triangle-ratios}).  They lie between two consecutive horizontal levels and have a common source and sink.  Continue both horizontally through the $T^{-1}E$ network.  Add the same $r-1$ horizontal paths on levels disjoint from these two levels.  The resulting two families have ratio $c$.

For an edge coordinate, the two distinguished paths in the $TE$ network leave the marked edge factor on adjacent levels (see Figure \ref{fig:edge-ratios}).  In the following $T^{-1}E$ network, use the first available rising path to bring them to a common vertex and then continue horizontally (see Figure \ref{fig:edge-ratio-example} for an example).  Adding the same $r-1$ horizontal paths on the remaining levels completes the two path families whose ratio is $c$ as desired. 
\end{proof}

\begin{figure}[ht]
\centering
\begin{subfigure}{.48\linewidth}
\centering
\includegraphics[width=\linewidth]{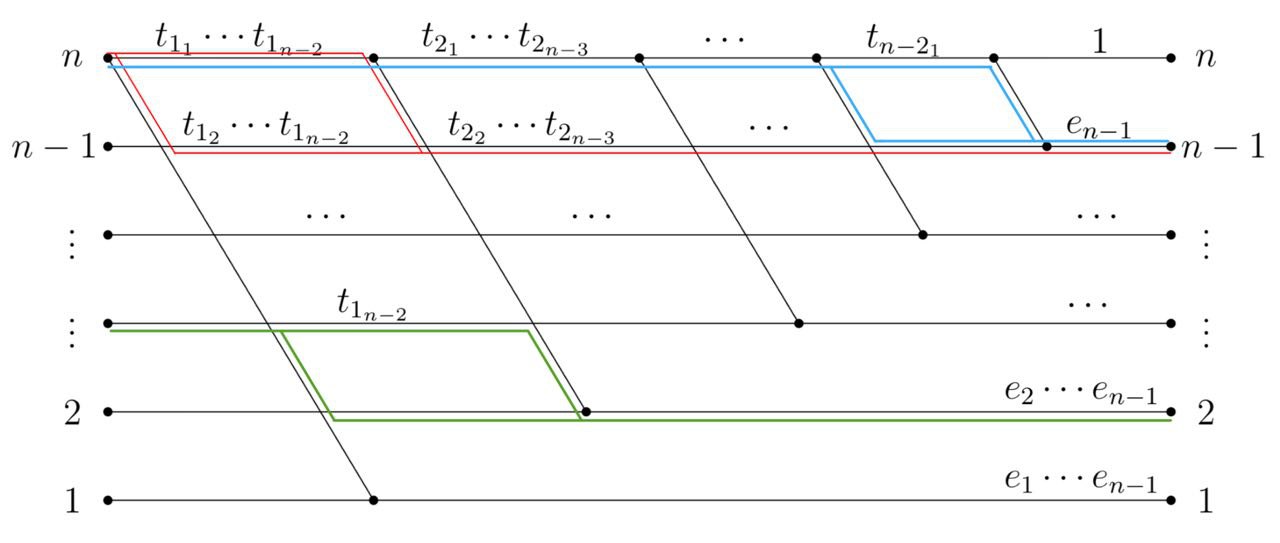}
\caption{Pairs detecting triangle coordinates.}
\label{fig:triangle-ratios}
\end{subfigure}\hfill
\begin{subfigure}{.48\linewidth}
\centering
\includegraphics[width=\linewidth]{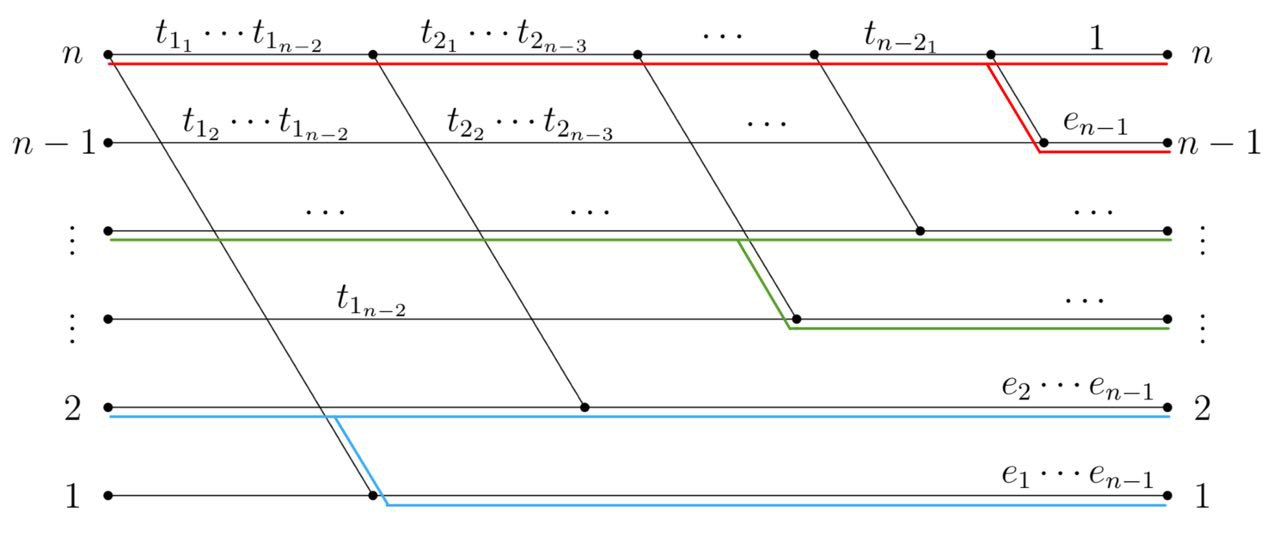}
\caption{Pairs detecting edge coordinates.}
\label{fig:edge-ratios}
\end{subfigure}
\caption{Coordinate-detecting paths in the $TE$ network.}
\end{figure}

\begin{figure}[ht]
\centering
\begin{tikzpicture}
    \foreach \y in {0, ..., 3} {\draw (0,\y) -- (4.8,\y);}

    \draw (0, 3) -- (1.5, 0) (1.5, 3) -- (2.5, 1) (3, 3) -- (3.5, 2);

    \foreach \y in {0, ..., 3} {\draw (5,\y) -- (9.8,\y);}

    \draw (5, 0) -- (6.5, 3) (6.5, 0) -- (7.5, 2) (8, 0) -- (8.5, 1);

    \draw[red] (0,1.9) -- (9.8, 1.9);
    \draw[red] (intersection of 1.4,3--2.4,1 and 0,0.9--9.8,0.9) -- (intersection of 4.9,0--6.4,3 and 0,0.9--9.8,0.9);
    \draw[red] (intersection of 1.4,3--2.4,1 and 0,1.9--9.8,1.9) -- (intersection of 1.4,3--2.4,1 and 0,0.9--9.8,0.9);
    \draw[red] (intersection of 4.9,0--6.4,3 and 0,1.9--9.8,1.9) -- (intersection of 4.9,0--6.4,3 and 0,0.9--9.8,0.9);

    \foreach \p/\t in {(-0.2,0)/$1$, (10,0)/$1$, (-0.2,1)/$2$, (10,1)/$2$, (-0.2,2)/$3$, (10,2)/$3$, (-0.2,3)/$4$, (10,3)/$4$, (2.2,3.2)/$t_1$, (.8,3.2)/$t_2t_3$, (1.2,2.2)/$t_3$, (3,0.2)/$e_1e_2e_3$, (3.6,1.2)/$e_2e_3$, (4.2,2.2)/$e_3$, (5.8,0.2)/$t_4$, (7.3,0.2)/$t_5t_6$, (7.8,1.2)/$t_6$, (9,0.2)/$e_4e_5e_6$, (9.2,1.2)/$e_5e_6$, (9.2,2.2)/$e_6$, (2.5,-0.6)/$TE$, (7.5,-0.6)/$T^{-1}E$}{\node at \p {\t};}
\end{tikzpicture}
\caption{For $n=4$, two paths with weight ratio $e_2$ become paths with a common source and sink after the following $T^{-1}E$ network.}
\label{fig:edge-ratio-example}
\end{figure}

\noindent We shall in fact prove a stronger result about a strict inequality of all \textit{minors} of the monodromy matrices $A$ and $A_0$:

\begin{prop}[Strict matrix domination for a punctured surface]\label{prop:punctured-strict}
Assume that at least one Fock--Goncharov coordinate of $\rho$ is not real and positive.  Then there is $\gamma\in\pi_1(S)$ with determinant-one lifts $
 A,A_0\in\slnc$
of $\rho(\gamma)$ and $\rho_0(\gamma)$ respectively such that, for all $I,J\subset \{1,2,\ldots,n\}$ with
$1\leq |I|=|J|<n$, we have  the inequality of minors 
\begin{equation}\label{eq:minor-strict}
 \abs{\minor{I}{J}{A}}
 <\minor{I}{J}{A_0}.
 \end{equation} 
In particular, $|A|<A_0$ entrywise.
\end{prop}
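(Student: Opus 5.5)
The plan is to build $\gamma$ as a loop whose monodromy word, in the form \eqref{decomp}, contains the two-block pattern of Lemma \ref{lem:coordinate-detecting} around a non-positive coordinate $c$. It should also contain the two-block pattern $T^{-1}E\cdot TE$ of Lemma \ref{lem:universal-routing}. We then argue minor by minor using Lindstr\"om's lemma.

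First, fix a Fock--Goncharov coordinate $c$ of $\rho$ that is not real and positive, belonging to a triangle $t$ or to an edge $e$ of $t$. Choose a closed loop $\gamma$ in general position that enters $t$, turns so as to produce the block $T(\mathbf t)E(\mathbf e)$ immediately followed by a block of the form $T^{-1}E$, and later passes through some consecutive pair $T^{-1}E\cdot TE$. This is possible because the dual graph of the ideal triangulation is connected and each triangle can be traversed with either turn. Let $A$ and $A_0$ be the lifts given by \eqref{decomp}. By Lemma \ref{keylem} and Lemma \ref{lem:dom}, $A$ is the weight matrix of a concatenated network $(\Gamma,\omega)$, and $A_0$ is the weight matrix of $(\Gamma,\lvert\omega\rvert)$. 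Lindstr\"om's lemma writes $\Delta_{I,J}(A)=\sum_{\mathcal F}\wt(\mathcal F)$ over vertex-disjoint families $\mathcal F$ from $I$ to $J$, and $\Delta_{I,J}(A_0)=\sum_{\mathcal F}\lvert\wt(\mathcal F)\rvert$. Strict inequality \eqref{eq:minor-strict} therefore follows once we exhibit two families $\mathcal G,\mathcal G'$ from $I$ to $J$ whose weights are nonzero and not positively proportional. For then $\lvert\wt(\mathcal G)+\wt(\mathcal G')\rvert<\lvert\wt(\mathcal G)\rvert+\lvert\wt(\mathcal G')\rvert$, and the triangle inequality for the remaining terms finishes the argument. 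Weights are products of coordinates and their reciprocals with positive coefficients, so they are never zero.

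Second, for given $I,J$ with $|I|=|J|=r<n$, we build $\mathcal G$ and $\mathcal G'$ by concatenation. Lemma \ref{lem:coordinate-detecting}, applied with this $r$, gives families $\mathcal F_c,\mathcal F_c'$ through the $TE\cdot T^{-1}E$ segment with a common source set $I_1$, a common sink set $J_1$, and weight ratio exactly $c$. Before this segment we need to route $I$ to $I_1$ by a single vertex-disjoint family, and after it we need to route $J_1$ to $J$. For this we use Lemma \ref{lem:universal-routing}: we arrange that the word contains a $T^{-1}E\cdot TE$ pair both before and after the detecting segment. Cyclically this is one extra pair, but for a word read from a fixed basepoint we need two; we insert extra passes if necessary. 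Any remaining building blocks in between are traversed using some fixed vertex-disjoint family, which exists because every block has nonzero minors. Indeed, each block is invertible with totally nonnegative modulus network, and we may use the universal routing again. Concatenating gives $\mathcal G=(\text{prefix})\mathcal F_c(\text{suffix})$ and $\mathcal G'=(\text{prefix})\mathcal F'_c(\text{suffix})$, with $\wt(\mathcal G)/\wt(\mathcal G')=c\notin\R_{>0}$. Since $c$ and $\gamma$ are chosen once and work for all $I,J$ simultaneously, a single $\gamma$ suffices. The entrywise statement is the case $r=1$.

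The main obstacle is the routing step. We must guarantee that the prefix and suffix can be chosen so that the concatenations remain vertex-disjoint families. Concatenating at the interface sinks is automatic, since vertex-disjointness within each sub-network is all that is needed and the interface vertices are distinct. We must also guarantee that the required sub-patterns actually occur in the monodromy word of a single closed curve. The first thing I would try is to make $\gamma$ traverse a fixed cycle in the dual graph several times, adjusting the turn signs $\delta_i$ to realise the patterns $TE\cdot T^{-1}E$ and $T^{-1}E\cdot TE$. I would then check the edge-coordinate case of Lemma \ref{lem:coordinate-detecting}, where the detecting paths merge only inside the following $T^{-1}E$ block, so the two families still share sources and sinks.
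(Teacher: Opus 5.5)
Your proposal is correct and follows essentially the same route as the paper. You choose a loop whose word contains a routing pair $T^{-1}E\cdot TE$, then the detecting piece $T(\mathbf t)E(\mathbf e)\,T^{-1}E$, then another routing pair, and finish with Lindstr\"om's lemma and the strict triangle inequality for two $I$-to-$J$ families with weight ratio $c\notin\R_{>0}$.

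One justification needs correcting, namely how you pass through the intermediate blocks. A single building block does not have all minors nonzero: each is triangular, e.g.\ $TE=M(t)\cdot SE(e)$ is lower triangular. Also, Lemma~\ref{lem:universal-routing} requires a pair of blocks, so it cannot be applied to a single block. Instead, continue each path along the horizontal path on its own level, which every building-block network contains; equivalently, invertible totally nonnegative matrices have positive principal minors. This keeps the level set unchanged, which is exactly what the paper does.
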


\begin{proof}
Choose an ideal triangle together with one of its sides so that the associated collection  of parameters $\mathbf t\cup\mathbf e$ contains a coordinate $c\notin\R_{>0}$.  Choose $\gamma$ whose monodromy word contains, in this order, the three distinct building blocks
\begin{equation*} 
 (T^{-1}E)(TE),
 \qquad
 T(\mathbf t)E(\mathbf e)\,T^{-1}E,
 \qquad
 (T^{-1}E)(TE)
 \tag{\(*\)}\label{eq:strict-pattern}
 \end{equation*} 
which need not be consecutive.

Fix $I,J$ with $|I|=|J|=r<n$.  By  Lemma \ref{lem:coordinate-detecting}, the central piece in \eqref{eq:strict-pattern} contains two $r$-path families with common source and sink sets and weight ratio $c$.  Use Lemma \ref{lem:universal-routing} in the left routing piece to connect $I$ to their common source set, and in the right routing piece to connect their common sink set to $J$.  Extend horizontally through the other blocks; this uses the fact that planar networks associated with the building blocks have horizontal paths across each level (\textit{cf.} Lemma \ref{keylem}).  This gives two $I$-to-$J$ families with weights $u$ and $v$ satisfying $u/v=c\notin\R_{>0}$.

Let $w_1,\ldots,w_N$ be the path-weights in the $I$-to-$J$ family in the planar network associated with $A$.  By Lindstr\"{o}m's Lemma (see Lemma \ref{lem:lind}) we have
\begin{equation*} 
 \minor{I}{J}{A}=\sum_s w_s,
 \qquad
 \minor{I}{J}{A_0}=\sum_s|w_s|.
\end{equation*} 
Since the two weights $u,v$ are part of this collection of weights, but are not positive real multiples of each other, the triangle inequality is strict and we have \eqref{eq:minor-strict}. 
\end{proof}

\subsection{Closed surface case: strict matrix domination}

We shall use the generalized building blocks and their approximants, already introduced in \S3.3; in particular, we shall use the fact that their corresponding planar networks are \textit{transparent} (see Definition \ref{def:transparent}). One consequence of being transparent is:

\begin{lem}\label{lem:neutral-insertion}
Every path and path-family construction in Lemmas \ref{lem:path-to-top}, \ref{lem:universal-routing} and \ref{lem:coordinate-detecting} remains valid after inserting finitely many transparent networks between the planar networks associated with the $T$ and $E$-factors.  The source and sink sets are unchanged, vertex-disjointness is preserved, and every weight ratio is unchanged.
\end{lem}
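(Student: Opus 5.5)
The plan is to reduce everything to Lemma \ref{lem:transprop}(1): a path or vertex-disjoint family arriving at a transparent network on some set of levels can be pushed across it along the distinguished channels $h_r$. These channels have weight $1$ and are pairwise vertex-disjoint, so nothing about the family changes. We then apply this observation to each of the three constructions in turn.

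\textbf{Setup.} Let the original network be a concatenation $\mathcal N_1\mathcal N_2\cdots\mathcal N_k$ of the planar networks for the $T$, $E$ (and $S$, diagonal) factors. Let the modified network be obtained by inserting transparent networks $\mathcal T_1,\ldots,\mathcal T_m$ at finitely many junctions between consecutive factors. Since concatenation glues sink-$r$ of one network to source-$r$ of the next, each junction is a column of $n$ vertices, one per level. Any path in the original network crosses a given junction at exactly one vertex, hence at one level.

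\textbf{Transport of a single path.} Given a path $p$ of the original network and a junction where $\mathcal T_j$ is inserted, let $r$ be the level at which $p$ crosses it. We cut $p$ there and splice in $h_r$. Doing this at every insertion gives a path $\hat p$ in the new network with the same source and sink as $p$. Its weight is $\wt(\hat p)=\wt(p)\cdot\prod 1=\wt(p)$. This already handles Lemma \ref{lem:path-to-top}, since the distinguished path to the top sink keeps weight $1$.

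\textbf{Families.} For a vertex-disjoint family $\mathcal F=\{p_1,\ldots,p_r\}$, the paths cross each junction at pairwise distinct levels, because they are vertex-disjoint at the junction vertices. Hence the spliced segments are distinct channels $h_{r_1},\ldots,h_{r_r}$, which are pairwise vertex-disjoint by Definition \ref{def:transparent}. Segments lying in distinct original or inserted subnetworks share no vertices except at the junction columns. At those columns the levels are distinct, so $\hat{\mathcal F}$ is vertex-disjoint, with the same source set $I$ and sink set $J$, and $\wt(\hat{\mathcal F})=\wt(\mathcal F)$. Applying this to the families of Lemma \ref{lem:universal-routing} gives the routing statement. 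Applying it to the pairs $\mathcal F_c,\mathcal F_c'$ of Lemma \ref{lem:coordinate-detecting} preserves both weights, so the ratio $c$ is unchanged.

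The main point needing care is the vertex-disjointness at the boundary columns. It requires the convention that concatenation identifies sinks with sources level by level, and that the transparent channels $h_r$ start at source-$r$ and end at sink-$r$ exactly; both are built into Definition \ref{def:transparent}. One must also check that the horizontal continuations used in Lemma \ref{lem:coordinate-detecting} and in Proposition \ref{prop:punctured-strict} pass across an inserted network. Here the transparent channels play exactly the role of the horizontal paths, so no new argument is needed.
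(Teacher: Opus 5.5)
Your proposal is correct and follows the paper's own argument. At each inserted transparent network you continue every path along the weight-one distinguished channel $h_r$ on its current level. These channels are pairwise vertex-disjoint, so sources, sinks, vertex-disjointness and individual weights (hence ratios) are all preserved. Your version simply writes out the junction-level bookkeeping that the paper leaves implicit, including the part of Lemma~\ref{lem:transprop}(1) left to the reader.
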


\begin{proof}
At each inserted transparent network, continue each path along the distinguished horizontal path on its current level.  Recall that since they are transparent, these distinguished paths are pairwise vertex-disjoint and have weight $1$.  Two compared families use the same continuation on every level they occupy, so their weight ratio is unchanged.
\end{proof}

\noindent Another basic observation is: 

\begin{lem}\label{prop:strict-limit}
Let $(\Gamma_N,\omega_N)$ be finite planar networks whose weight matrices converge to $A$, and suppose the networks $(\Gamma_N,\lvert \omega_N\rvert )$ have weight matrices converging to $A_0$.  Fix $I,J$ with $|I|=|J|$.  Assume that every $\Gamma_N$ contains two distinguished $I$-to-$J$ path families with weights $u,v\in\C^*$ independent of $N$, such that $u/v\notin\R_{>0}$.  There exists \(0<q<1\), which isindependent of \(N\), such that
\[
 |\Delta_{I,J}(A_N)|\le q\,\Delta_{I,J}(A_{0,N})
\]
for every \(N \geq 1\); consequently the same inequality holds in the limit.
\end{lem}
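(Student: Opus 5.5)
The plan is to apply Lindström's lemma (Lemma \ref{lem:lind}) to each network and split off the two distinguished families. Write $A_N$ and $A_{0,N}$ for the weight matrices of $(\Gamma_N,\omega_N)$ and $(\Gamma_N,\lvert\omega_N\rvert)$. By Lemma \ref{lem:lind},
\[
\Delta_{I,J}(A_N)=u+v+R_N,\qquad \Delta_{I,J}(A_{0,N})=\lvert u\rvert+\lvert v\rvert+R^0_N,
\]
where $R_N$ is the sum of the weights of the remaining vertex-disjoint $I$-to-$J$ families. The quantity $R^0_N$ is the sum of their moduli, so $\lvert R_N\rvert\le R^0_N$.

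The key elementary step is a uniform strict triangle inequality for the pair $(u,v)$. Since $u/v\notin\R_{>0}$, the angle $\theta\in(0,\pi]$ between $u$ and $v$ is nonzero. Hence
\[
\lvert u+v\rvert\le q_0(\lvert u\rvert+\lvert v\rvert)
\]
for some $q_0<1$ depending only on $u,v$. One may take
\[
q_0=\frac{\lvert u+v\rvert}{\lvert u\rvert+\lvert v\rvert},
\]
which is $<1$ precisely because $u,v$ are not positively proportional. This is the only place the hypothesis enters, and $q_0$ is independent of $N$ because $u,v$ are.

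Next, combine the two estimates. We have
\[
\lvert\Delta_{I,J}(A_N)\rvert\le q_0(\lvert u\rvert+\lvert v\rvert)+R^0_N=\Delta_{I,J}(A_{0,N})-(1-q_0)(\lvert u\rvert+\lvert v\rvert).
\]
This gives a uniform additive defect $\epsilon:=(1-q_0)(\lvert u\rvert+\lvert v\rvert)>0$. Converting it to a multiplicative factor is the main obstacle, since it requires a uniform upper bound on $\Delta_{I,J}(A_{0,N})$. For this I would use the convergence $A_{0,N}\to A_0$: minors are continuous, so $\Delta_{I,J}(A_{0,N})\to\Delta_{I,J}(A_0)$. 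The sequence is therefore bounded, say by $K$, and $K\ge\lvert u\rvert+\lvert v\rvert>0$. Then
\[
\lvert\Delta_{I,J}(A_N)\rvert\le(1-\epsilon/\Delta_{I,J}(A_{0,N}))\,\Delta_{I,J}(A_{0,N})\le(1-\epsilon/K)\,\Delta_{I,J}(A_{0,N}).
\]
Setting $q:=1-\epsilon/K\in(0,1)$ gives the claim for all $N$. Here we use that $\Delta_{I,J}(A_{0,N})\ge\lvert u\rvert+\lvert v\rvert>0$, being a sum of positive weights that includes $\lvert u\rvert$ and $\lvert v\rvert$.

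Finally, let $N\to\infty$ using $A_N\to A$ and $A_{0,N}\to A_0$ and the continuity of minors. This yields $\lvert\Delta_{I,J}(A)\rvert\le q\,\Delta_{I,J}(A_0)$, and hence $\lvert\Delta_{I,J}(A)\rvert<\Delta_{I,J}(A_0)$ because $\Delta_{I,J}(A_0)\ge\lvert u\rvert+\lvert v\rvert>0$.
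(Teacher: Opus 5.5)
Your proposal is correct and follows essentially the same route as the paper: your additive defect $\epsilon=(1-q_0)(\lvert u\rvert+\lvert v\rvert)=\lvert u\rvert+\lvert v\rvert-\lvert u+v\rvert$ is exactly the paper's constant $d$. The passage to a multiplicative factor via a uniform bound on $\Delta_{I,J}(A_{0,N})$, coming from $A_{0,N}\to A_0$, is the same step as in the paper. One small point: in the degenerate case $u=-v$ with no other families, $1-\epsilon/K$ can equal $0$; taking $K>\epsilon$ strictly guarantees $q\in(0,1)$.
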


\begin{proof}
The proof is an application of Lindstr\"{o}m's lemma (Lemma \ref{lem:lind}) that expresses a minor as the sum of weights of a path-family, and the triangle inequality. 
For each $N$, write the sum of all remaining weights of the $I$-$J$ path family in Lindstr\"{o}m's lemma as $r_N$ and the sum of their moduli as $R_N$, so that $R_N\geq|r_N|$. Let $A_N$ be the weight matrix of $(\Gamma_N, \omega_N)$ and $A_{0,N}$ be the weight matrix of $(\Gamma_N, \lvert \omega_N\rvert)$. Then we have:
\begin{equation*} 
 \minor{I}{J}{A_{0,N}}
 -\abs{\minor{I}{J}{A_N}}
= (\lvert u \rvert + \lvert v \rvert + R_N)-\lvert u+v+r_N\rvert
\geq |u|+|v|-|u+v| =:d
\end{equation*}
where note that the lower bound $d$ is independent of $N$. 
Since $A_{0,N} \to A_0$, we also have the upper bound 
$$\minor{I}{J}{A_{0,N}} \leq M$$ for all $N\geq 1$, and hence the ratio 
\begin{equation*} 
\frac{\abs{\minor{I}{J}{A_N}}}{\minor{I}{J}{A_{0,N}}} \leq 1 - \frac{d}{M} < 1
 \end{equation*} 
 where note that the upper bound is strictly less than $1$ and independent of $N$. Taking a supremum proves the strict domination $\abs{\minor{I}{J}{A_N}}\leq q \minor{I}{J}{A_{0,N}}$ with factor
$$q  := \sup\limits_{N\geq 1} \frac{\abs{\minor{I}{J}{A_N}}}{\minor{I}{J}{A_{0,N}}} <1.$$
Taking $N\to \infty$ we obtain the desired strict domination in the limit. 
\end{proof}

\noindent We shall need a closed curve whose coarse plaque itinerary contains the finite pattern from Proposition \ref{prop:punctured-strict}; we record the following basic fact that allows us to do this: 

\begin{lem}[Realizing coarse itinerary]\label{lem:closed-itinerary}
Let $K\subset\widetilde S$ be a compact geodesic segment transverse to $\widetilde\lambda$.  Prescribe finitely many additional left or right exits through plaques before and after $K$.  Then there is a nonempty open set $U\subset T^1S$ such that the lift of any closed geodesic determined by a point in $U$ realizes the prescribed finite coarse plaque itinerary. 
\end{lem}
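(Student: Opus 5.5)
The plan is to first build one long geodesic segment in $\widetilde S$ that realizes the prescribed itinerary, and then to perturb. Start from the compact segment $K$, which is transverse to $\widetilde\lambda$. Extend it at both ends by a finite concatenation of geodesic segments. Each added segment enters a plaque through one side and exits through the prescribed (left or right) other side. At each such plaque we choose the exit point in the interior of the chosen side, and we choose the next segment to cross that side transversely at a point away from the ideal vertices.

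Since plaques are ideal triangles and the number of prescribed turns is finite, this gives a piecewise-geodesic path $\sigma$. Straighten it rel endpoints to a geodesic segment $\sigma^\ast$ whose endpoints lie in the interiors of the initial and final plaques. The point to check is that straightening preserves the itinerary. The geodesic from a point of the first plaque to a point of the last plaque must cross every leaf of $\widetilde\lambda$ separating them, and those separating leaves are exactly what the coarse itinerary records, including the sides $g_i^\pm$ and the left/right turns. Here we use the convexity of plaques together with the fact that in $\HH^2$ a geodesic crosses each complete geodesic at most once. If needed, we lengthen the end pieces slightly so that the endpoints lie deep inside the first and last plaques.

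Next we pass to an open set. Let $v\in T^1\widetilde S$ be the initial vector of $\sigma^\ast$, of length $T$. By continuous dependence of geodesics on initial data, as in the proof of Lemma~\ref{lem:lall}, there is a neighborhood $\widetilde U$ of $v$ such that for $w\in\widetilde U$ the geodesic segment of length $T$ issued from $w$ starts in the interior of the first plaque and ends in the interior of the last one. The itinerary depends only on which plaques contain the endpoints: it is the ordered list of separating plaques and sides, and the turns are determined by which sides separate. So every such segment realizes the same finite coarse itinerary.

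Let $U$ be the projection of $\widetilde U$ to $T^1S$. Any closed geodesic $\gamma$ whose tangent line passes through a vector of $U$ has a lift containing a length-$T$ subsegment starting at a lift of that vector in $\widetilde U$. That lift therefore realizes the prescribed itinerary as a sub-itinerary of the coarse itinerary of $\gamma$, and we can choose the coarse itinerary of $\gamma$ so that it contains these plaques. The set $U$ is nonempty and open.

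The main obstacle is the straightening step: showing that the straightened geodesic visits exactly the same plaques with the same turns. The plan is to argue via separation. Each prescribed plaque $P_i$ separates the initial plaque from the final plaque, because two of its sides are crossed and $\widetilde\lambda$ is made of disjoint complete geodesics. Hence any geodesic between the endpoint plaques passes through the interior of $P_i$, entering and exiting through the same two sides, so the turn direction $\delta_i$ is unchanged.
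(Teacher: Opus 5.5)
Your construction of $U$ is essentially correct, but it takes a different route from the paper. The paper argues at infinity. For a geodesic entering a plaque through a fixed side, each of the two exit sides corresponds to an open interval of forward endpoints. Nesting finitely many such intervals, starting from the endpoints of the geodesic through $K$, gives an open set of endpoint pairs, hence of unit tangent vectors. You instead straighten one piecewise-geodesic path and use the following observation. For a geodesic segment, the plaques it crosses, and their entry and exit sides, depend only on the plaques containing its two endpoints. This holds because a segment crosses a complete geodesic exactly when that geodesic separates its endpoints. For an intermediate plaque, the third side cannot separate them, since the three outer half-planes of an ideal triangle are disjoint. Openness then follows from continuous dependence of geodesics on initial data. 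This makes explicit why the itinerary is an open condition even with infinitely many leaves between coarse plaques, a point the paper's sketch leaves implicit.

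One detail needs tightening. That a piecewise-geodesic path crosses two sides of $P_i$ does not by itself show $P_i$ separates its endpoints, because such a path can recross a side. Build $\sigma$ so that after each exit side it stays in the half-plane beyond that side, for instance by letting each connecting piece lie in the convex region bounded by consecutive prescribed sides. Then the successive half-planes are nested.

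The genuine omission is that you stop at ``$U$ is nonempty and open'' and never produce a closed geodesic through $U$. Read literally, the statement only constrains closed geodesics through $U$. However, the lemma is invoked in Proposition~\ref{prop:closed-strict} precisely to \emph{choose} a closed geodesic whose coarse itinerary contains the pattern \eqref{eq:strict-pattern}. Nothing in your argument rules out the lemma being vacuous.

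The paper closes this with the density of periodic orbits of the Anosov geodesic flow on $T^1S$, for example via the Anosov closing lemma; you need to add this step. Also note that a finite union of closed orbits is nowhere dense in $T^1S$, so infinitely many distinct closed geodesics meet $U$. Hence you can take one of length comfortably larger than $T$, as Lemma~\ref{lem:lall} does with $\ell>4T$. Without such a bound, the length-$T$ pattern need not fit inside a single period $P_0,\ldots,P_m=\gamma P_0$. Your claim that the coarse itinerary of $\gamma$ can be chosen to contain the prescribed plaques is then unjustified.
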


\begin{proof}
For a geodesic entering an ideal triangle through a fixed side, the two possible exit sides correspond to two nonempty open intervals of endpoints at infinity, separated by the opposite ideal vertex.  Starting with the endpoints of the geodesic containing $K$, choose a finite succession of  nested open intervals of endpoint pairs which realize the prescribed finite sequence of exits; the resulting set of unit tangent vectors is nonempty and open. The geodesic flow on the unit tangent bundle of a closed hyperbolic surface is Anosov, and its periodic orbits are dense, and hence intersects $U$. A closed geodesic corresponding to such a periodic vector $(x,v) \in U$ has a lift containing the prescribed itinerary (\textit{cf.} \cite[Chapter~18]{KH95}).
\end{proof}

\noindent Recall that for a closed surface $S$, an $n$-pleated representation $\rho:\pi_1(S) \to \pslnc$ is represented by  $(\alpha_\rho, \theta_\rho) \in \Y(\lambda,n;\mathbb{C}/2\pi i\mathbb{Z})$  (see Theorem \ref{bon-gen}). Here we shall assume $n\geq 3$ since we have already handled the $n=2$ case separately in \S4.2. We observe:

\begin{lem}\label{lem:marked-coordinate}
If $(\alpha_\rho, \theta_\rho) \in \Y(\lambda,n;\mathbb{C}/2\pi i\mathbb{Z})$ is such that $(\operatorname{Im}\alpha_\rho,\operatorname{Im}\theta_\rho)\neq 0$
then either $t_{j}(x) \notin \mathbb{R}_{>0}$ 
for some labeled plaque $x$ and $j\in\cB$, or $ e_{ i}(P,Q)\notin \mathbb{R}_{>0}$ 
for some distinct plaques $P,Q$ and $i\in\cA$.  (Here recall that $\cA$ and $\cB$ are index-sets defined in \eqref{eq:anbn}.) 

\end{lem}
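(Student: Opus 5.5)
We argue by contraposition: assume every $t_j(x)$ and every $e_i(P,Q)$ lies in $\R_{>0}$, and show that then $(\operatorname{Im}\alpha_\rho,\operatorname{Im}\theta_\rho)=0$ in $\Y(\lambda,n;\R/2\pi\Z)$. The key is the definition \eqref{eq:exp-coord}. The parameters $t_j(x)=\exp(\theta^j_\rho(x))$ and $e_i(P,Q)=\exp(\alpha^i_\rho(P,Q))$ are exponentials of quantities in $\C/2\pi i\Z$. Hence $t_j(x)\in\R_{>0}$ holds exactly when $\operatorname{Im}\theta^j_\rho(x)\equiv 0 \pmod{2\pi}$, and similarly for $e_i(P,Q)$. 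This uses only that $\exp:\C/2\pi i\Z\to\C^*$ is a bijection sending the real line exactly onto $\R_{>0}$.

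The steps are as follows.

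First, for every labeled plaque $x$ and $j\in\cB$, deduce that $\operatorname{Im}\theta^j_\rho(x)=0$ in $\R/2\pi\Z$. This handles the triangle part of the cocycle directly.

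Second, handle the shear part. An element of $\Y(\lambda,n;\cdot)$ is a function on pairs of distinct plaques $(P,Q)$ with indices in $\cA$, subject to additivity and symmetry relations, and it is determined by its values on all such pairs (see \cite[\S4]{MMMZ}). So $e_i(P,Q)\in\R_{>0}$ for all $P,Q,i$ gives $\operatorname{Im}\alpha^i_\rho(P,Q)=0$ for every pair, and hence $\operatorname{Im}\alpha_\rho=0$ as an element of the cocycle space.

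Together these show that the imaginary part vanishes, which contradicts the hypothesis.

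The only point requiring care, and the step I expect to be the main obstacle, is checking that the shear-parameters $e_i(P,Q)$ in \eqref{eq:exp-coord} are really exponentials of the components $\alpha^i_\rho(P,Q)$ of the cocycle, evaluated on \emph{arbitrary} pairs of distinct plaques. The worry is that they might be defined only on some generating family, with the cocycle's imaginary part possibly carried by relations not detected this way. I would settle this by recalling from \cite[Section 4.1.5]{MMMZ} that $\alpha_\rho$ is defined pairwise, as the logarithm of the double ratios of the quadruple $(\xi(y_2),\xi(y_1),\xi(y_3),\Sigma_\rho(g_1,g_2)\cdot\xi(z))$. In that case the coordinate map of Theorem \ref{bon-gen} records exactly these values, and the contrapositive follows with no further work.
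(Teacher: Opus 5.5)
Your proposal is correct and is essentially the paper's own argument: via \eqref{eq:exp-coord}, a class in $\R/2\pi\Z$ vanishes exactly when its complex exponential is positive real, and this is applied componentwise. Your contrapositive phrasing, and your remark that an element of $\Y(\lambda,n;\cdot)$ is determined by its values on all labeled plaques and all pairs of distinct plaques, only make explicit what the paper's one-line proof takes for granted.
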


\begin{proof} Recall that the triangle and edge invariants are related to the components of the cocycle by \eqref{eq:exp-coord}. A class in $\R/2\pi\Z$ is zero exactly when its complex exponential is positive real; since $(\operatorname{Im}\alpha_\rho,\operatorname{Im}\theta_\rho)\neq 0$ one of the exponentiated coordinates must fail to be real and positive. 
\end{proof}

\noindent The closed-surface analogue of Proposition \ref{prop:punctured-strict} is then:

\begin{prop}[Strict matrix domination for a closed surface]\label{prop:closed-strict}
Let $S$ be closed, let $\lambda$ be an arbitrary maximal geodesic lamination on $S$, and let $\rho:\pi_1(S) \to \pslnc$ determined by $(\alpha_\rho, \theta_\rho) \in \Y(\lambda,n;\mathbb{C}/2\pi i\mathbb{Z})$ with $n\geq3$.  Let $\rho_0$ be the Hitchin representation determined by the real part of the cocycle as in \eqref{eq:rho0coords}.  If $(\operatorname{Im}\alpha_\rho,\operatorname{Im}\theta_\rho)\neq0$,
then there exist a nontrivial $\gamma\in\pi_1(S)$, and determinant-one lifts
$A,A_0\in\slnc$ of $\rho(\gamma), \rho_0(\gamma) \in \pslnc$ respectively, 
such that
\begin{equation}\label{eq:closed-strict-minors}
 \abs{\minor{I}{J}{A}}
 <\minor{I}{J}{A_0}
\end{equation}
for every $I,J\subset \{1,2,\ldots, n\}$ with $1\leq|I|=|J|<n$.  In particular $|A|<A_0$. 
\end{prop}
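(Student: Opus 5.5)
We transplant the argument of Proposition \ref{prop:punctured-strict} to the closed-surface setting. The three tools are the generalized building blocks of Lemma \ref{lem:genbb}, their finite approximants from Corollary \ref{cor:approx-gbb}, and the limiting statement Lemma \ref{prop:strict-limit}.

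\emph{Step 1: choose a non-positive coordinate.} By Lemma \ref{lem:marked-coordinate}, there is a coordinate $c\notin\R_{>0}$. Either $c=t_j(x)$ is a triangle parameter of some plaque $x$, or $c=e_i(P,Q)$ is a shear parameter between two distinct plaques. In the shear case, $c$ is an entry of the edge matrix $E(e_\rho(P,Q))$ appearing in the generalized building block from $P$ to $Q$.

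\emph{Step 2: choose the curve.} We use Lemma \ref{lem:closed-itinerary} to pick a closed geodesic $\gamma$ whose coarse itinerary contains, in order, the three patterns \eqref{eq:strict-pattern}. The central $T(\mathbf t)E(\mathbf e)T^{-1}E$ piece should involve the marked coordinate $c$.
\begin{itemize}
\item In the triangle case, the itinerary passes through $x$ with a right turn ($\delta=+1$) and then makes a left turn in the following plaque.
\item In the shear case, the coarse itinerary contains $P$ followed by $Q$ with $Q$ entered by a $+1$ turn, so that the block is $T(t_\rho(Q))\Sigma_\rho E(e_\rho(P,Q))$.
\end{itemize}
The routing pieces $(T^{-1}E)(TE)$ are obtained by prescribing a left turn followed by a right turn before and after. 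We may also assume the axis of $\gamma$ is not a leaf of $\widetilde\lambda$.

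The monodromy is then the product \eqref{eq:coarse-monodromy} of finitely many generalized building blocks. Its approximants $A_N$ come from the concatenated networks of Corollary \ref{cor:approx-gbb}. As in Lemma \ref{lem:closed-monodromy-approximants}, $A_N\to A$, and the modulus networks have weight matrices $A_{0,N}\to A_0$.

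\emph{Step 3: build the two path families.} Fix $I,J$ with $|I|=|J|=r<n$. Each approximant network is a concatenation of the $T^{\pm1}$- and $E$-networks of \S2.5, with transparent slithering networks inserted between them. By Lemma \ref{lem:neutral-insertion}, the constructions of Lemmas \ref{lem:universal-routing} and \ref{lem:coordinate-detecting} survive these insertions, with the same weight ratios. This produces two vertex-disjoint $I$-to-$J$ families with weights $u,v$ satisfying $u/v=c\notin\R_{>0}$.

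The key point is that $u$ and $v$ are independent of $N$. The paths run only along distinguished weight-one channels in the slithering networks, and the remaining weights come from the fixed $T$- and $E$-factors. One caveat is that the $S$-conjugated slithering factors $SS_{\rho,N}S^{-1}$ carry $D_\varepsilon$ signs. Lemma \ref{lem:transprop}(3),(4) shows that the distinguished channels still have weight $1$, which is the fact we need.

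Lemma \ref{prop:strict-limit} then gives $|\Delta_{I,J}(A_N)|\le q\,\Delta_{I,J}(A_{0,N})$ uniformly in $N$, and hence $|\Delta_{I,J}(A)|\le q\,\Delta_{I,J}(A_0)$. To turn this into the strict inequality \eqref{eq:closed-strict-minors}, we need $\Delta_{I,J}(A_0)>0$. This holds because the limit satisfies $\Delta_{I,J}(A_{0,N})\ge |u|+|v|>0$ for every $N$. Taking the finitely many pairs $(I,J)$ together completes the proof, and $k=1$ gives $|A|<A_0$.

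\emph{Main obstacle.} The hardest step is the shear case. There the marked edge coordinate $e_\rho(P,Q)$ belongs to a generalized building block in which a slithering factor sits between $T$ and $E$, unlike the adjacent-plaque setting of Lemma \ref{lem:coordinate-detecting}.

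We must check that the two edge-detecting paths can pass through the slithering network without altering their ratio. They exit the $E$-factor on adjacent levels, so they can be carried through the transparent channels; this is the content of Lemma \ref{lem:neutral-insertion}. We also need the itinerary to realize the required turn $\delta=+1$ at $Q$. If $P$ and $Q$ are not adjacent, the coarse itinerary must be chosen to contain exactly the pair $(P,Q)$, which is compatible with Lemma \ref{lem:closed-itinerary}. I would first attempt this by choosing $K$ to be a segment from the interior of $P$ to the interior of $Q$.
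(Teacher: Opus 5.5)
Your proposal is correct and follows essentially the same route as the paper. Both choose a non-positive coordinate $c$ via Lemma \ref{lem:marked-coordinate}, realize the pattern \eqref{eq:strict-pattern} with the marked $TE$ piece via Lemma \ref{lem:closed-itinerary}, and carry the coordinate-detecting families through the transparent slithering approximants by Lemma \ref{lem:neutral-insertion}, which yields $N$-independent weights $u,v$ with $u/v=c$, before concluding with Lemma \ref{prop:strict-limit}. Your explicit remark that $\Delta_{I,J}(A_0)\geq |u|+|v|>0$, which is needed to upgrade $|\Delta_{I,J}(A)|\leq q\,\Delta_{I,J}(A_0)$ to the strict inequality, fills in a detail the paper leaves implicit.
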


\begin{proof}
By  Lemma \ref{lem:marked-coordinate}, choose parameter $c\notin\R_{>0}$. 
If $c$ is a triangle parameter $t_{j}(x)$, choose the plaque of $x$ as the marked $T$-turn.  If $c$ is an edge invariant $e_{i}(P,Q)$, choose the coarse transition from $P$ to $Q$ as the marked $E$-change, and choose the following turn to be positive.  In either case, use  Lemma \ref{lem:closed-itinerary} to choose a closed geodesic whose coarse plaque itinerary  contains the pattern \eqref{eq:strict-pattern}, with the central marked $TE$ piece involving the parameter $c$.  Note that by \eqref{eq:coarse-monodromy} the determinant-one lift $A$ is a product of generalized building blocks, with slithering factors inserted between the $T^\pm E$ building blocks corresponding to the pattern \eqref{eq:strict-pattern}.

The closed geodesic $\gamma$ lifts to an arc between the starting plaque $P_0$ and the terminal plaque $\gamma P_0$.  By Corollary \ref{cor:slithering-networks} we can replace each exact slithering matrix by transparent approximants, to obtain an approximant of a generalized building block (see Corollary \ref{cor:approx-gbb}) that is the weight matrix of a weighted planar network.  Concatenating these building block networks gives a matrix $A_N$ whose decomposition contains the $T^\pm E$ blocks corresponding to \eqref{eq:strict-pattern}, and the remaining approximants of the slithering factors are weight matrices of transparent networks. By Corollary \ref{cor:slithering-networks} and \eqref{eq:coarse-monodromy} we have $A_N \to A$ as $N\to \infty$, where $A$ is the determinant-one lift of $\rho(\gamma)$.  Moreover, by Corollary \ref{cor:approx-gbb} and \eqref{eq:closed-moduli}, replacing each weight by its modulus results in a weighted planar network with weight matrix $A_{0,N}$ such that  $A_{0,N} \to A_0$ where $A_0$  is a lift of $\rho_0(\gamma)$. 

Fix $I,J$ with $|I|=|J|=r<n$.  Applying Lemmas \ref{lem:universal-routing} and \ref{lem:coordinate-detecting} to the $T^\pm E$ factors of \eqref{eq:strict-pattern} and Lemma \ref{lem:neutral-insertion} to the inserted slithering approximants, we obtain two full $I$-to-$J$ path families,  with weights $u,v$ independent of $N$ such that $\frac uv=c\notin\R_{>0}$. 
Note that the independence of $N$ is crucial, and uses the fact that the slithering approximants are transparent.  By Lemma \ref{prop:strict-limit} we conclude that \eqref{eq:closed-strict-minors} holds.  Since $I,J$ were arbitrary, all proper minors are strict.
\end{proof}

\begin{cor}\label{cor:closed-ineq} For the curve $\gamma$ constructed in the above Proposition, we have the following strict inequalities of the Hilbert and translation lengths: 
\begin{equation*}
    \ell^H_\rho(\gamma) < \ell^H_{\rho_0}(\gamma) \text{ and } \ell_\rho(\gamma) < \ell_{\rho_0}(\gamma).
\end{equation*}
\end{cor}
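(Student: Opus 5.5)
We start from Proposition \ref{prop:closed-strict}: the determinant-one lifts satisfy $\abs{\minor{I}{J}{A}}<\minor{I}{J}{A_0}$ for every $I,J$ with $1\leq |I|=|J|=k<n$. For a fixed $k$, the entries of $\bigwedge^k A$ are exactly these $k\times k$ minors. Since there are finitely many of them, we get a uniform factor $0<q<1$ with $\bigl|\bigwedge^k A\bigr|\leq q\,\bigwedge^k A_0$ entrywise, for each $1\leq k\leq n-1$.

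Next we upgrade Lemma \ref{lem:linalg} to a strict version. If $|B|\leq qA$ with $A$ entrywise positive, then $|B^m|\leq |B|^m\leq q^m A^m$ for every $m$. Gelfand's formula then gives $\sigma(B)\leq q\,\sigma(A)<\sigma(A)$. We need $A_0$, and hence $\bigwedge^k A_0$, to have positive entries so that $\sigma(\bigwedge^k A_0)>0$. This holds because $\rho_0$ is Hitchin: the strict inequality $\abs{\minor{I}{J}{A}}<\minor{I}{J}{A_0}$ already forces every proper minor of $A_0$ to be positive, so $A_0$ is entrywise nonnegative and in fact totally positive in its proper minors. Applying this with $B=\bigwedge^kA$ and $A=\bigwedge^kA_0$ yields
\[
\sum_{i=0}^{k-1}\log|\lambda_{n-i}| < \sum_{i=0}^{k-1}\log|\lambda'_{n-i}|, \qquad 1\leq k\leq n-1,
\]
and both sides vanish for $k=n$.

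For the Hilbert length, the case $k=1$ gives $|\lambda_n|<|\lambda'_n|$. The case $k=n-1$, combined with the determinant-one condition, gives $\log|\lambda_1|>\log|\lambda'_1|$; equivalently, one can apply the argument to $\gamma^{-1}$, whose lifts $A^{-1},A_0^{-1}$ have entries that are signed $(n-1)$-minors, as in Proposition \ref{prop:hilbg}. Together these give $\ell^H_\rho(\gamma)<\ell^H_{\rho_0}(\gamma)$.

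For the translation length, set $x=(\log|\lambda_i|)$ and $y=(\log|\lambda'_i|)$, both sorted in increasing order. Then $x$ is majorized by $y$, with strict inequality of all proper partial sums, so in particular $x\neq y$. Karamata's inequality for the strictly convex function $s\mapsto s^2$ is strict whenever $x$ is not a permutation of $y$. This is the step requiring the most care. If the strict form of Karamata is not available as a citation, we argue directly: $x\prec y$ with $x\neq y$ implies that $x$ is obtained from $y$ by a nontrivial doubly stochastic averaging (Hardy--Littlewood--P\'olya). A nontrivial T-transform strictly decreases $\sum s^2$ when the two averaged coordinates differ, and they must differ here, since otherwise the vectors would coincide. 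Hence $\sum(\log|\lambda_i|)^2<\sum(\log|\lambda'_i|)^2$, which gives $\ell_\rho(\gamma)<\ell_{\rho_0}(\gamma)$ by Definition \ref{defn:lengths}.
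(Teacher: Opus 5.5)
Your proposal is correct and follows essentially the paper's argument: a uniform factor $q<1$ for the exterior powers from the finitely many strict minor inequalities, a strict form of Lemma \ref{lem:linalg} via Gelfand's formula, and then strict majorization together with the strict case of Karamata's inequality for $s\mapsto s^2$. The only difference is minor: for the smallest eigenvalue you mainly use the $k=n-1$ exterior power with the determinant-one condition instead of rerunning the argument for $\gamma^{-1}$. This is slightly more self-contained, since Proposition \ref{prop:closed-strict} already gives strict domination of all proper minors for $\gamma$ itself. (Also, $\sigma(\bigwedge^k A_0)\geq 1$ follows directly from $\det A_0=1$, so the Hitchin property is not needed there.)
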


\begin{proof} The strict inequality of Hilbert lengths follows exactly as in the proof of Proposition \ref{prop:hilbg}: by the strict matrix domination $\lvert A \rvert < A_0$ of Proposition \ref{prop:closed-strict} there is  $0<\alpha <1$ such that $\lvert A \rvert \leq \alpha A_0$; Lemma \ref{lem:linalg} then implies $$ \alpha \lambda^\prime_n  \geq \rvert \lambda_n \lvert $$ where the left and right sides are the largest eigenvalues (in modulus) of $A_0$ and $A$ respectively. Applying the argument to the inverse curve $\gamma^{-1}$, we also obtain the corresponding inequality 
 $ \alpha^\prime \lambda^\prime_1  \leq  \rvert \lambda_1 \lvert $
 for the smallest eigenvalues in modulus, where $0<\alpha^\prime <1$. 
 The strict inequality  $\ell^H_\rho(\gamma) < \ell^H_{\rho_0}(\gamma)$ follows since Hilbert lengths are the ratios of the largest to smallest eigenvalues in modulus (see Definition \ref{defn:lengths}). 

 Similarly, the strict inequality of translation lengths follows from the same argument as in the proof of Proposition \ref{prop:trlbg}: by the strict domination of proper minors, if we define
\begin{equation}\label{eq:beta}
\beta:=\max_{1\leq |I|=|J|<n}
\frac{|\minor{I}{J}{A}|}{\minor{I}{J}{A_0}}.
\end{equation}
where note that $0<\beta <1$.
By Lemma \ref{lem:linalg} we would then have 
$$\beta \prod\limits_{i=0}^{k-1} \lvert \lambda_{n-i}^\prime \rvert \geq  \prod\limits_{i=0}^{k-1} \lvert \lambda_{n-i} \rvert$$ 
for each $1\leq k \leq n-1$, and for $k=n$, the equality $\prod\limits_{i=0}^{n-1} \lvert \lambda_{n-i}^\prime \rvert =  \prod\limits_{i=0}^{n-1} \lvert \lambda_{n-i} \rvert =1$. Taking logarithms, we would have the \textit{strict} majorization 
$$(\log \lvert \lambda_1\rvert, \log \lvert \lambda_2 \rvert, \ldots, \log \lvert \lambda_n\rvert) \prec (\log \lvert \lambda_1^\prime\rvert, \log \lvert \lambda_2^\prime \rvert, \ldots, \log \lvert \lambda_n^\prime\rvert)$$
which by Karamata's inequality (since $s\mapsto s^2$ is a strictly convex function) implies a strict inequality of translation lengths (see, for example, \cite[Proposition 3.C.1.a]{MOA}). 
\end{proof}

\subsection{Completing the proof of Theorem \ref{thm:ent}}\label{sec:entropy-rigidity}

In this section we shall complete the proof of Theorem \ref{thm:ent}, the entropy rigidity result. Throughout, let $\rho:\pi_1(S) \to \pslnc$ be a quasi-Hitchin representation in the bending fiber of a Hitchin representation $\rho_0:\pi_1(S) \to \pslnr$. Moreover  $\mathcal{S}(L)$ shall denote the set of closed curves on $S$ with Hilbert length with respect to $\rho_0$ at most $L$.

\medskip 

\noindent The main work in this section is to prove the following statistical domination result:

\begin{prop}[Statistical domination of lengths]\label{prop:stat}
Let $\rho_0$ be a Hitchin representation and let $\rho \neq \rho_0$ be a quasi-Hitchin representation in its bending fiber. There exist constants $0<\alpha<1$ and $0<\lambda<1$ such that, for all sufficiently large $L$, at least an $\alpha$-proportion of the curves in $\mathcal S(L)$ satisfy
\[
\ell_\rho^H(\gamma)\leq\lambda\ell_{\rho_0}^H(\gamma).
\]
Moreover, when $\rho_0$ is $n$-Fuchsian, the analogous statistical domination holds for the translation length.
\end{prop}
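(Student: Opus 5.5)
The plan is to mimic the quasi-Fuchsian argument of \S4.2, with the geometric defect function replaced by the strict matrix domination of Proposition \ref{prop:closed-strict}. Since $\rho\neq\rho_0$ lies in the bending fiber, $(\operatorname{Im}\alpha_\rho,\operatorname{Im}\theta_\rho)\neq0$. Proposition \ref{prop:closed-strict} together with Lemma \ref{lem:closed-itinerary} then give a finite coarse plaque itinerary, namely the pattern \eqref{eq:strict-pattern} containing the marked parameter $c\notin\R_{>0}$. Any lifted geodesic realizing this itinerary has a monodromy factor $W$ (a finite product of generalized building blocks) whose proper minors are strictly dominated. The key point is that the dominating factor $q<1$ from Lemma \ref{prop:strict-limit} depends only on the pattern, not on the rest of the curve.

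Next I would upgrade this to an additive defect. Take a closed curve $\gamma$ whose axis passes through the pattern region at $m$ disjoint times. I would write its monodromy, via \eqref{eq:coarse-monodromy}, as $A=C_m W_m\cdots C_1W_1$, where the $C_j$ are intermediate products of generalized building blocks. All factors are limits of planar-network weight matrices whose modulus networks give the corresponding $\rho_0$-factors. I would then run Lindstr\"om's lemma on the concatenated network. Every $I$-to-$J$ family passing through the $j$-th pattern block splits according to its levels at the entry and exit of that block. Grouping families by these levels, and using the fact that within each block the two detecting families (routed via Lemmas \ref{lem:universal-routing}, \ref{lem:coordinate-detecting} and \ref{lem:neutral-insertion}) have ratio $c$, I aim to show that each pattern block multiplies the bound by a uniform factor $q<1$. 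This should give
\[
|\Delta_{I,J}(A)|\le q^{m}\,\Delta_{I,J}(A_0)
\]
for all proper minors, or at least for the top and bottom singular directions. This step is the main obstacle. The naive triangle inequality gives strictness only once, so the argument must exploit that between blocks the minor is a positive combination of block minors. That requires a submultiplicative Cauchy--Binet estimate of the form $|\bigwedge^kA|\le \bigwedge^kC_m\cdot q\bigwedge^kW_{0,m}\cdots$, which I would obtain from entrywise domination of $\bigwedge^k$ of each factor via Lemma \ref{lem:closed-monodromy-approximants}.

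With $q^m$ in hand, I would apply Lemma \ref{lem:linalg} to $\bigwedge^1$ for $\gamma$ and for $\gamma^{-1}$. This gives $\ell^H_\rho(\gamma)\le\ell^H_{\rho_0}(\gamma)-2m|\log q|$. I would then use equidistribution, exactly as in Lemma \ref{lem:lall}, but with respect to the Bowen--Margulis measure of $\rho_0$ (from \cite{Sam14}). This measure has full support, so the open set $U$ from Lemma \ref{lem:closed-itinerary} has positive measure. Ordering curves by $\ell^H_{\rho_0}$, a proportion $\ge c>0$ of $\mathcal S(L)$ therefore has $m\ge\beta\,\ell^H_{\rho_0}(\gamma)$. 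For these curves, $\ell^H_\rho(\gamma)\le(1-2\beta|\log q|)\ell^H_{\rho_0}(\gamma)$, after shrinking $\beta$ so that the factor lies in $(0,1)$.

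For the translation-length statement with $\rho_0$ $n$-Fuchsian, I would use the $q^m$ bound on all $\bigwedge^k$, $1\le k<n$, to get a quantitatively strict majorization. Writing $\mu'$ for the log-eigenvalue vector of $\rho_0(\gamma)$ and $\mu$ for that of $\rho(\gamma)$, each partial sum of $\mu$ is at most the corresponding partial sum of $\mu'$ minus $m|\log q|$. For $n$-Fuchsian $\rho_0$ we have $\mu'$ proportional to $(n-1,n-3,\dots)\,\ell_X(\gamma)/2$, a fixed direction. A Karamata-type strong convexity estimate then gives $\|\mu\|^2\le\|\mu'\|^2-c'm\,\ell_X(\gamma)$. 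Equidistribution is now with respect to the Liouville measure \cite{Bowen-eq}, and it yields $\ell_\rho(\gamma)\le\lambda\,\ell_{\rho_0}(\gamma)$ for a positive proportion of curves.
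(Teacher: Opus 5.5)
Your proposal is correct and follows the paper's route. The compounding step you flag as the main obstacle is exactly Lemma~\ref{lem:compound} applied to the exterior powers $\bigwedge^k$, $1\le k<n$, as in Propositions~\ref{prop:multgain1} and~\ref{prop:multgain2}, and this is followed by the equidistribution argument of Lemma~\ref{lem:lall}, using the Liouville measure in the $n$-Fuchsian case and the full-support Bowen--Margulis measure of the Hilbert flow for general Hitchin $\rho_0$. The only cosmetic differences are in two places. For the smallest eigenvalue the paper uses $\bigwedge^{n-1}$ instead of $\gamma^{-1}$. In the translation-length step, the paper obtains the squared-length gap $2m\delta\,(\ell^H_\rho(\gamma)+\ell^H_{\rho_0}(\gamma))$ by summation by parts and converts it into an additive defect, whereas you use the $n$-Fuchsian gap $y_1-y_n=(n-1)\ell_X(\gamma)$ directly; both lead to the same conclusion.
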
 

 The proof of Proposition \ref{prop:stat} and its translation length counterpart relies on the observation that from the argument in the  proof of Proposition \ref{prop:closed-strict}, the length inequality in Corollary \ref{cor:closed-ineq} compounds when the curve has \textit{multiple} segments with the fixed coarse plaque itinerary. We record this as the following two propositions, the first for  Hilbert lengths, whose proof is a warmup for the proof of the second proposition concerning translation lengths.

 \noindent We start with a basic observation: 

 \begin{lem}[Compounding domination]\label{lem:compound}  Let $(\Gamma_0, \omega_0)$ be a weighted planar network with associated weight matrix $M_0$ such that the weight matrix  $M^\prime_0$ of  $(\Gamma_0, 
 \lvert \omega_0\rvert )$ strictly dominates, i.e. $$\beta M^\prime_0 \geq  \lvert M_0 \rvert$$ for some $0<\beta <1$. 
 Then if $(\Gamma, \omega)$ is a concatenation of planar networks including $m$ copies of   $(\Gamma_0, \omega_0)$, its weight matrix $M$ and the weight matrix $M^\prime$ of  $(\Gamma, \lvert \omega\rvert)$ satisfy the strict domination $$\beta^m M^\prime \geq \lvert M \rvert.$$ 
 \end{lem}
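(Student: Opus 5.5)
The plan is to reduce everything to the fact that concatenating planar networks multiplies their weight matrices, combined with the entrywise triangle inequality. Write the concatenation as
\[
(\Gamma,\omega)=(\Gamma_1,\omega_1)(\Gamma_0,\omega_0)(\Gamma_2,\omega_2)\cdots
\]
with $m$ copies of $(\Gamma_0,\omega_0)$ interspersed among other networks $(\Gamma_k,\omega_k)$. Then $M$ is the ordered product of the corresponding weight matrices $M_k$ and $m$ copies of $M_0$. Since replacing weights by moduli commutes with concatenation, $M'$ is the same ordered product with each factor replaced by its modulus-network weight matrix $M_k'$, respectively $M_0'$. The modulus-network matrices have nonnegative real entries.

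The key inequality is entrywise submultiplicativity: for complex matrices $X,Y$ and nonnegative matrices $X',Y'$ with $|X|\le X'$ and $|Y|\le Y'$ entrywise, we have
\[
|XY|\le |X|\,|Y|\le X'Y'.
\]
The first step is the triangle inequality applied to $\sum_l x_{il}y_{lj}$. The second step uses the monotonicity of products of nonnegative matrices. For each non-distinguished factor we have $|M_k|\le M_k'$, which is the content of the proof of Lemma \ref{lem:dom}: each entry is a sum of path weights, so the triangle inequality applies. For each distinguished factor the hypothesis gives $|M_0|\le \beta M_0'$.

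Inducting on the number of factors, we obtain
\[
|M|\le \prod \bigl(\text{bounds}\bigr)= \beta^m M',
\]
since the scalars $\beta$ pull out of the nonnegative product. This gives $\beta^m M'\ge |M|$.

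The only point needing care is that ``$M_0'$ strictly dominates'' must be interpreted entrywise against nonnegative (possibly zero) entries. Here I would note that the case of zero entries is harmless, because the inequality $\beta\cdot 0\ge 0$ holds trivially. There is no real obstacle beyond this bookkeeping. In particular, it is not necessary to argue via Lindstr\"om's lemma or path families, since the matrix-level submultiplicativity suffices.
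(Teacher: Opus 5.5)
Your proposal is correct and follows essentially the same route as the paper. Both write $M$ and $M'$ as ordered products over the concatenated pieces, bound each factor entrywise by the triangle inequality (with the extra factor $\beta$ on the $m$ copies of $(\Gamma_0,\omega_0)$), and then use $|XY|\le|X|\,|Y|$ together with monotonicity of products of nonnegative matrices to obtain $|M|\le\beta^m M'$.
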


\begin{proof}
Let the concatenation be
\[
        (\Gamma,\omega)
        =
        (\Gamma_1,\omega_1)\circ\cdots\circ
        (\Gamma_N,\omega_N).
\]
with  weight matrices
\[
        M_i=W(\Gamma_i,\omega_i)
\]
The corresponding weight matrices  of $(\Gamma,\lvert \omega\rvert)$ are denoted by $M^\prime_i$.  

By the triangle inequality (as in the proof of Lemma \ref{lem:dom}) we have:
\begin{equation}\label{eq:factorwise-modulus-bound}
        \lvert M_i\rvert\leq M_i'
        \qquad (1\leq i\leq N).
\end{equation}
and for the copies of $(\Gamma_0,\omega_0)$, the hypothesis implies the strict inequality 
\begin{equation}\label{eq:factorwise-strict-bound}
        \lvert M_i\rvert
        \leq \beta M_i'
        \qquad (i\in S)
\end{equation}
where $0<\beta <1$.
Since concatenation of networks corresponds to multiplication of their weight
matrices, we have 
\[
        M=M_1M_2\cdots M_N,
        \qquad
        M'=M_1'M_2'\cdots M_N'.
\]
and hence 
\[
        \lvert M\rvert
        \leq
        \lvert M_1\rvert\lvert M_2\rvert\cdots\lvert M_N\rvert \leq \beta^m   M_1'M_2'\cdots M_N'  = \beta^m M^\prime
\]
which is the desired strict domination.
\end{proof}

\begin{prop}[Multiple traversals, Hilbert length]\label{prop:multgain1}Let $\gamma$ be a closed curve on $S$ with $m$ pairwise-disjoint segments, each having the coarse itinerary \eqref{eq:strict-pattern}. Let $A, A_0 \in \slnc$ be the determinant-one lifts of $\rho(\gamma), \rho_0(\gamma)$ respectively. Then we have 
\begin{equation}\label{eq:mult-H}
    \ell_{\rho}^H(\gamma) \leq \ell_{\rho_0}^H(\gamma) - 2m \ln (\alpha^{-1}) 
\end{equation}
for some $0<\alpha<1$.
\end{prop}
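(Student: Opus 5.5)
We first choose a coarse itinerary for $\gamma$ that isolates the $m$ special segments. Lift one period of $\gamma$ to $\widetilde S$ and take $P_0$ to be a plaque met by the lift. Then choose $P_0,P_1,\ldots,P_{\tilde m}=\gamma P_0$ so that, for each of the $m$ disjoint segments, every plaque occurring in the pattern \eqref{eq:strict-pattern} appears consecutively in the list. By \eqref{eq:coarse-monodromy}, the determinant-one lift $A$ is then a product of generalized building blocks. This product contains $m$ sub-words, each consisting of the $T^{\pm1}E$ blocks of \eqref{eq:strict-pattern} with slithering factors inserted between them. The parameter $c\notin\R_{>0}$ is the same for every copy, since it is the same cocycle value up to $\pi_1(S)$-equivariance. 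We replace every slithering factor by its finite transparent approximant (Corollaries \ref{cor:slithering-networks} and \ref{cor:approx-gbb}). This gives networks $(\Gamma_N,\omega_N)$ with weight matrices $A_N\to A$, and $(\Gamma_N,\lvert\omega_N\rvert)$ has weight matrix $A_{0,N}\to A_0$.

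The key step is a uniform strict domination for one copy of the pattern. Let $W_N$ denote the weight matrix of the sub-network for one pattern segment, including its inserted slithering approximants, and let $W'_N$ denote the weight matrix of its modulus network. For each pair $(i,j)$ we take $I=\{i\}$, $J=\{j\}$, which is allowed since $n\ge 3$ gives $1<n$. Lemmas \ref{lem:universal-routing}, \ref{lem:coordinate-detecting} and \ref{lem:neutral-insertion} then produce two paths from $i$ to $j$ whose weights $u,v$ do not depend on $N$ and satisfy $u/v=c$. Lemma \ref{prop:strict-limit} therefore gives $\lvert W_N\rvert\le \alpha W'_N$ entrywise, for some $\alpha<1$ independent of $N$. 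The difficulty is that $\alpha$ must also be independent of which copy we are looking at, and Lemma \ref{prop:strict-limit} needs a bound $M$ on the entries of $W'_N$. We handle this with equivariance. Each copy is a $\pi_1(S)$-translate of a fixed configuration of plaques, so its canonical snake bases, triangle and shear parameters and slithering maps are the same. We may therefore fix one model segment and run the argument once, so that $d$ and $M$ and hence $\alpha$ depend only on the pattern. Slight freedom in where a segment enters its first plaque is absorbed into the choice of coarse itinerary, which uses only the plaques in the pattern. This is the step I expect to be the main obstacle: we must check that the approximants for different copies can be chosen compatibly, or that the bound survives the limit $N\to\infty$ in each block separately.

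Lemma \ref{lem:compound} then gives $\lvert A_N\rvert\le\alpha^m A_{0,N}$, and letting $N\to\infty$ yields $\lvert A\rvert\le\alpha^m A_0$. Lemma \ref{lem:linalg} gives $\lvert\lambda_n\rvert\le\alpha^m\lambda'_n$ for the top eigenvalues in modulus. Next we apply the same argument to $\gamma^{-1}$. Its itinerary contains $m$ copies of the reversed pattern, and the reversal of \eqref{eq:strict-pattern} again has the routing/detecting/routing shape with parameter $c$ or $c^{-1}$; after possibly shrinking $\alpha$ this gives $\lvert\lambda_1\rvert^{-1}\le \alpha^m(\lambda'_1)^{-1}$. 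Taking the logarithm of the ratio of these two bounds gives
\[
\ell^H_\rho(\gamma)=\log\frac{\lvert\lambda_n\rvert}{\lvert\lambda_1\rvert}\le \log\frac{\lambda'_n}{\lambda'_1}-2m\ln(\alpha^{-1})=\ell^H_{\rho_0}(\gamma)-2m\ln(\alpha^{-1}),
\]
which is \eqref{eq:mult-H}.
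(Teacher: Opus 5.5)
Your treatment of the largest eigenvalue is essentially the paper's. The per-copy factor comes from Lemma \ref{prop:strict-limit}. The uniformity over the $m$ copies is handled as you propose: the paper chooses the exhaustions of the slithering factors equivariantly, so that every occurrence contributes the same block $C_N$ (see the proof of Proposition \ref{prop:multgain2}). Lemmas \ref{lem:compound} and \ref{lem:linalg} then give $\lvert\lambda_n\rvert\le\alpha^m\lvert\lambda_n'\rvert$.

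\textbf{The gap is the bound for $\lambda_1$ via $\gamma^{-1}$.} Reversing the orientation reverses the order of the plaques and flips every turn sign. The reversed pattern therefore does again have the shape $(T^{-1}E)(TE)$, $(TE)(T^{-1}E)$, $(T^{-1}E)(TE)$. However, its central $TE$ slot is now occupied by the \emph{other} plaque of the original central pair. The marked plaque appears with a negative turn, i.e.\ as a $T^{-1}$ factor. Lemma \ref{lem:coordinate-detecting} only detects coordinates of the $T(\mathbf t)E(\mathbf e)$ factor. So when $c$ is a triangle parameter, nothing in the paper produces two path families for $\gamma^{-1}$ with ratio $c^{\pm1}$, and the plaque now sitting in the $T$ slot may have all its parameters positive real. (The edge case is less problematic, since the same edge still occupies the $E$-factor of the new $TE$ slot.) Your claim that the reversed pattern detects ``$c$ or $c^{-1}$'' would therefore need a new detecting lemma for $T^{-1}E$ networks.

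No reversal is needed: the paper stays inside the same networks and uses $(n-1)\times(n-1)$ minors.
\begin{itemize}
\item Lemma \ref{prop:strict-limit} applies to every $I,J$ with $\lvert I\rvert=\lvert J\rvert=n-1$, since Proposition \ref{prop:closed-strict} gives strict domination of all proper minors of the pattern block. After possibly shrinking $\alpha$, this gives $\lvert\bigwedge^{n-1}C_N\rvert\le\alpha\bigwedge^{n-1}C_{0,N}$ uniformly in $N$.
\item For the remaining factors, Lindstr\"om's lemma gives $\lvert\bigwedge^{n-1}B_j\rvert\le\bigwedge^{n-1}B_j'$.
\item Since $\bigwedge^{n-1}$ is multiplicative, the compounding argument yields $\lvert\bigwedge^{n-1}A\rvert\le\alpha^m\bigwedge^{n-1}A_0$.
\item By the determinant-one condition, $\sigma(\bigwedge^{n-1}A)=\lvert\lambda_2\cdots\lambda_n\rvert=\lvert\lambda_1\rvert^{-1}$. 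Lemma \ref{lem:linalg} then gives $\lvert\lambda_1\rvert^{-1}\le\alpha^m\lvert\lambda_1'\rvert^{-1}$.
\end{itemize}
Equivalently, $A^{-1}$ is the signed adjugate of $A$, so ``passing to $\gamma^{-1}$'' should be read through the $(n-1)$-minors of the original network, not through the reversed itinerary. Adding the two logarithmic bounds gives \eqref{eq:mult-H}.
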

\begin{proof}
    Here $\alpha$ is the strict domination factor, for all minors of the product of $T^\pm E$ building blocks corresponding to the pattern \eqref{eq:strict-pattern} in the proof of Proposition \ref{prop:closed-strict}.
    The curve $\gamma$ traverses \eqref{eq:strict-pattern} $m$ times implies that in the planar networks $(\Gamma_N, \omega_N)$  associated with the approximants $A_N \to A$, we have $m$ distinct instances of the planar network corresponding to the pattern \eqref{eq:strict-pattern}. Recall that the approximants $A_{0,N} \to A_0$, correspond to the planar networks $(\Gamma_N, \lvert \omega_N\rvert)$.  By concatenation, it follows that from Lemma \ref{lem:compound} that we have the matrix domination  $\lvert A \rvert \leq \alpha^m A_0$, and applying the same lemma to exterior powers, we have $\lvert \bigwedge\limits^kA \rvert \leq \bigwedge\limits^k A_0$ for each $1\leq k<n$.  Let $\lambda_n, \lambda_n^\prime$ be the largest eigenvalues (in modulus) of $A$ and $A_0$ respectively. It follows from Lemma \ref{lem:linalg} that $\sigma(A) \leq \alpha^m \sigma(A_0)$ and hence there is the following inequality:
    \begin{equation*}
        \log \lvert  \lambda_n \rvert \leq  \log \lvert  \lambda_n^\prime  \rvert - m\log(1/\alpha).
    \end{equation*}
    Applying the domination inequality for the $(n-1)$-th exterior-power, we also have $$\sigma\left(\lvert \bigwedge\limits^k  A\rvert\right) \leq \alpha^m \sigma\left(\bigwedge\limits^{n-1} A_0 \right)$$
    which implies, by Lemma \ref{lem:linalg} and the determinant-one condition, that:
     \begin{equation*}
        -\log \lvert  \lambda_1 \rvert \leq  -\log \lvert  \lambda_1^\prime  \rvert - m\log(1/\alpha)
    \end{equation*}
    and adding these inequalities yields \eqref{eq:mult-H}. \end{proof}

\begin{prop}[Multiple traversals, translation length]\label{prop:multgain2} As in the preceding  proposition, let $\gamma$ be the closed curve on $S$, traversing \eqref{eq:strict-pattern} $m \geq 1$ times,  and $A,A_0 \in \slnc$ be the representatives of $\rho(\gamma), \rho_0(\gamma)$. Then we have
\begin{equation}\label{eq:repeated-X}
 \ell_\rho(\gamma)
 \leq
 \ell_{\rho_0}(\gamma)- (2\sqrt{2}\delta/\sqrt n) m
\end{equation}
for some $\delta>0$, where recall that $\ell_\rho(\cdot)$ denotes the translation length. 
\end{prop}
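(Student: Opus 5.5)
Set $x_i=\log|\lambda_i|$ and $x_i'=\log|\lambda_i'|$ for the moduli of the eigenvalues of $A$ and $A_0$, ordered increasingly, with partial sums from the top $s_k=\sum_{i=0}^{k-1}x_{n-i}$ and $s_k'=\sum_{i=0}^{k-1}x'_{n-i}$. The plan is to upgrade the majorization of Proposition \ref{prop:trlbg} to a quantitative one, with a deficit that grows linearly in $m$, and then convert that deficit into a drop in the Euclidean norm $\ell=\sqrt2\,\lVert x\rVert$.

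\textbf{Step 1: quantitative majorization.} Let $\beta<1$ be the factor of \eqref{eq:beta}, i.e.\ the strict domination factor for all proper minors of the network attached to one copy of the pattern \eqref{eq:strict-pattern}, as given by Proposition \ref{prop:closed-strict} and Lemma \ref{prop:strict-limit}. Apply Lemma \ref{lem:compound} to the $k$-th exterior power networks: by Lindstr\"om's lemma, the minors of a concatenation are computed by the Cauchy--Binet product of the exterior powers. Pass to the limit along the approximants $A_N,A_{0,N}$, using transparency exactly as in Proposition \ref{prop:multgain1}. This gives $\lvert\bigwedge^kA\rvert\le\beta^m\bigwedge^kA_0$ for $1\le k<n$. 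Lemma \ref{lem:linalg} then yields
\begin{equation*}
s_k\le s_k'-m\delta \quad (1\le k\le n-1), \qquad s_n=s_n'=0,
\end{equation*}
with $\delta=\log(1/\beta)>0$.

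\textbf{Step 2: from majorization deficit to norm deficit.} We have $\lVert x'\rVert^2-\lVert x\rVert^2=\sum_i (x_i'-x_i)(x_i'+x_i)$. A cleaner route is convexity: for $d=x'-x$ one has $\lVert x'\rVert^2-\lVert x\rVert^2=2\langle x,d\rangle+\lVert d\rVert^2$. Abel summation gives $\langle x,d\rangle=\sum_k (x_{n-k+1}-x_{n-k})(s_k'-s_k)\ge0$, since the coordinates of $x$ are ordered. Moreover $\lVert d\rVert\ge |s_1'-s_1|\ge m\delta$, and more sharply $\lVert d\rVert_2\ge \lVert d\rVert_1/\sqrt n\ge \max_k|s_k'-s_k|\cdot(\text{const})/\sqrt n$. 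Hence $\lVert x'\rVert^2\ge\lVert x\rVert^2+c\,m^2\delta^2/n$.

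This only controls a difference of squares. To get a \emph{linear} deficit $\ell_{\rho_0}-\ell_\rho\ge (2\sqrt2\delta/\sqrt n)m$, I would instead use the strict Schur-convexity of the norm in its Lipschitz form. Pair $x'$ with the unit vector $u=x'/\lVert x'\rVert$, whose coordinates are ordered like $x'$, and write
\begin{equation*}
\lVert x'\rVert-\lVert x\rVert\ge\langle u,x'-x\rangle=\frac{1}{\lVert x'\rVert}\sum_k (x'_{n-k+1}-x'_{n-k})(s_k'-s_k),
\end{equation*}
which is the Abel summation again. Each gap $s_k'-s_k$ is at least $m\delta$, so this is at least $m\delta\,(x_n'-x_1')/\lVert x'\rVert$. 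The ratio $(x_n'-x_1')/\lVert x'\rVert$ is bounded below by a universal constant of order $2/\sqrt n$: for a zero-sum vector, $\lVert x'\rVert\le\sqrt n\max|x_i'|\le\sqrt n\,(x_n'-x_1')$, and a more careful bound gives the factor $2$. Multiplying by $\sqrt2$ then gives \eqref{eq:repeated-X}.

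\textbf{Main obstacle.} The delicate point is Step 1: justifying the $m$-fold compounding for \emph{all} proper minors simultaneously in the presence of the slithering approximants. The bound must be uniform in $N$ so that it survives the limit, and this is where Lemma \ref{prop:strict-limit} and transparency are essential. The constant bookkeeping in Step 2 is routine once the inequality $\langle u,x'-x\rangle\ge0$ is written via Abel summation.
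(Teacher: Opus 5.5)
Your proposal is correct. Step~1 coincides with the paper's argument: a factor $q<1$, uniform in $N$, for all proper minors of one copy of \eqref{eq:strict-pattern} (Lemma~\ref{prop:strict-limit}, using transparency), compounded $m$ times through $\bigwedge^k$ of $A_N=B_0C_NB_1\cdots C_NB_m$, passed to the limit, and fed into Lemma~\ref{lem:linalg}. The factor you need is this pattern factor, not the $\beta$ of \eqref{eq:beta}; that $\beta$ is the minor ratio for the full monodromy of one particular curve.

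The routes diverge in converting the deficits $s'_k-s_k\ge m\delta$ into a length deficit.

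\emph{The paper's route.} It makes the chord-slope proof of Karamata's inequality for $s\mapsto s^2$ quantitative. With the nonincreasing slopes $c_i=x_{n-i+1}+x'_{n-i+1}$, it writes $\lVert x'\rVert^2-\lVert x\rVert^2=\sum_{i=1}^{n-1}(s'_i-s_i)(c_i-c_{i+1})\ge m\delta(\ell^H_\rho+\ell^H_{\rho_0})$. It then applies the range bound $\max_j z_j-\min_j z_j\ge\frac{2}{\sqrt n}\lVert z\rVert$ to both vectors and divides by $\ell_\rho+\ell_{\rho_0}$.

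\emph{Your route.} You take a supporting hyperplane of the norm at $x'$ and Abel-sum against the gaps of $x'$ alone. This gives the linear deficit directly, needs the range bound only for $\rho_0$, and produces the same constant $2\sqrt2\delta/\sqrt n$. The paper's symmetric identity additionally records the intermediate estimate \eqref{pf-ineq} in terms of Hilbert lengths.

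Two details you should supply:
\begin{enumerate}
\item $x'\neq 0$, so that $x'/\lVert x'\rVert$ is defined. This holds because $x'_n=s'_1\ge s_1+m\delta\ge m\delta>0$, using $s_1=x_n\ge 0$ for a zero-sum vector.
\item The factor $2$, which you only assert. Sum $(a-z_j)(z_j-b)\ge 0$ over $j$, with $a=\max_j z_j$ and $b=\min_j z_j$, and use $\sum_j z_j=0$; this gives $\lVert z\rVert^2\le -nab\le n(a-b)^2/4$.
\end{enumerate}
In any case, $\delta$ is only claimed to exist (uniformly in $\gamma$ and $m$), so losing that factor would merely rescale $\delta$.
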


\begin{proof}
Let $C_N$ be the weight matrix of the planar network corresponding to the  pattern \eqref{eq:strict-pattern}, and let $C_{0,N}$ be the weight matrix of corresponding network where each weight is replaced by its modulus. The path-family construction in the proof of
Proposition~\ref{prop:closed-strict} gives, for every
$I,J\subset\{1,\ldots,n\}$ with
$1\leq |I|=|J|<n$, two $I$-to-$J$ path families whose weights are
independent of $N$ and whose ratio does not belong to
$\mathbb R_{>0}$.

By applying Lemma \ref{prop:strict-limit} to each pair $I,J \subset \{1,2,\ldots,n\}$ where $1\leq |I|=|J| \leq n-1$, there is a constant
$0<q<1$, independent of $N$, such that
\begin{equation}\label{eq:local-uniform-exterior}
  \left|\bigwedge^k C_N\right|
  \leq
  q\,\bigwedge^k C_{0,N}
\end{equation}
entrywise for every $1\leq k<n$.

Suppose that $\gamma$ contains $m$ pairwise interior-disjoint
occurrences of \eqref{eq:strict-pattern}.  We can choose the occurrences
with the same orientation and choose the finite exhaustions of their
slithering factors equivariantly.  Consequently there would be $m$ distinct occurrences of the matrix $C_N$ and $C_{0,N}$  in the approximants $A_N \to A$ and $A_{0,N} \to A_0$ respectively (\textit{cf.} Lemma \ref{lem:closed-monodromy-approximants}). 
In other words, we can write $$A_N = B_0C_NB_1 \cdots C_NB_m$$
where $C_N$ occurs $m$ times, and correspondingly $$A_{0,N} = B_0^\prime C_{0,N}B_1^\prime \cdots C_{0,N}B_m^\prime$$
where the remaining $B^\prime$-matrices have all non-negative entries and by Lindstr\"{o}m's lemma and the triangle inequality, satisfy the domination $\lvert \bigwedge\limits^k B_j \rvert \leq \bigwedge\limits^k B_j^\prime$ for each $0\leq j\leq m$ and $1\leq k <n$.

From the functorial property of the exterior power, and  the basic inequality $|XY|\leq |X||Y|$ it follows that $$\left\vert\bigwedge^k A_N \right\vert \leq q^m \left\vert\bigwedge^k A_{0,N} \right\vert  $$
holds for each $1\leq k\leq n-1$, where we have collected the $m$ $q$-factors corresponding to \eqref{eq:local-uniform-exterior}.

Passing to the limit, we obtain  $$\left\vert\bigwedge^k A \right\vert \leq q^m \left\vert\bigwedge^k A_{0} \right\vert  $$ and  Lemma \ref{lem:linalg} then implies 
$$ q^m \sigma(\bigwedge^k A_0) = q^m \prod\limits_{i=0}^{k-1} \lvert \lambda_{n-i}^\prime \rvert \geq  \prod\limits_{i=0}^{k-1} \lvert \lambda_{n-i} \rvert  =   \sigma(\bigwedge^k A)$$  where the eigenvalues of $A$ (respectively $A_0$) are $\lvert \lambda_1\rvert \leq \lvert \lambda_2\rvert \leq  \cdots \leq \lvert \lambda_n\rvert$ (respectively $\lvert \lambda_1^\prime\rvert \leq \lvert \lambda_2^\prime\rvert \leq  \cdots \leq \lvert \lambda_n^\prime \rvert $).  
Taking logarithms and writing 
\begin{equation*}
x_i=\log|\lambda_{n-i+1}|,
 \qquad
  y_i=\log|\lambda^\prime_{n-i+1}|,
 \qquad 1\leq i\leq n,
\end{equation*}
we then have 
\begin{equation}\label{eq:quantitative-partial-sums}
 \sum_{i=1}^k x_i
 \leq
 \sum_{i=1}^k y_i-m\delta
\end{equation}
for each $1\leq k\leq n-1$, where $\delta := \log (1/q)$; note that  $\sum_{i=1}^n x_i = \sum_{i=1}^n y_i =0$ by the determinant-one condition. Let \[
 s_k=\sum_{i=1}^k(y_i-x_i)
\]
for each $1\leq k\leq n-1$,  and note that each $s_k \geq m\delta$. In what follows define $s_0=s_n=0$, and let 
\[
 c_i :=x_i+y_i
\]
which is a non-increasing sequence  since both $\{x_i\}_{1\leq i\leq n}$ and $\{y_i\}_{1\leq i\leq n}$ are non-increasing. Since
\[
 y_i-x_i=s_i-s_{i-1},
\]
summation by parts gives
\begin{equation*}
 \sum_{i=1}^n(y_i^2-x_i^2) =  \sum_{i=1}^n(y_i-x_i)(y_i + x_i)  = \sum_{i=1}^n(s_i-s_{i-1})c_i = \sum_{i=1}^{n-1}s_i(c_i-c_{i+1}).
\end{equation*}
Since  each $c_i-c_{i+1}\geq 0$ we have
\begin{equation*}
 \sum_{i=1}^n(y_i^2-x_i^2) \geq
 m\delta\sum_{i=1}^{n-1}(c_i-c_{i+1}) =m\delta(c_1-c_n)\\
 =m\delta
 \bigl((x_1-x_n)+(y_1-y_n)\bigr) =m\delta
 \bigl(
   \ell^H_\rho(\gamma)+\ell^H_{\rho_0}(\gamma)
 \bigr)
\end{equation*}
which can be summarized as 
\begin{equation}\label{pf-ineq}
\ell_{\rho_0}(\gamma)^2 - \ell_{\rho}(\gamma)^2 \geq 2m\delta \bigl(
   \ell^H_\rho(\gamma)+\ell^H_{\rho_0}(\gamma)
 \bigr).
\end{equation}

\smallskip
\noindent We now observe:

\smallskip

\noindent \textit{Claim.} \textit{The inequality $ \ell^H_\rho(\gamma) \geq \frac{2}{\sqrt 2n} \ell_{\rho}(\gamma)$ always holds.} 

\noindent \textit{Proof of claim.} It suffices to prove the elementary inequality
\begin{equation*}
 z_1-z_n\geq\frac{2}{\sqrt n}
 \left(\sum_{j=1}^n z_j^2\right)^{1/2}
\end{equation*}
for every non-increasing vector
$z_1\geq\cdots\geq z_n$ that satisfies $\sum_jz_j=0$.

For this, let $M=z_1$ and $m=z_n$.  Since $m\leq z_j\leq M$,
\[
 (M-z_j)(z_j-m)\geq0,
\]
and therefore
\[
 z_j^2\leq(M+m)z_j-Mm.
\]
Summing and using $\sum_jz_j=0$ gives
\[
 \sum_{j=1}^n z_j^2\leq-nMm \leq  n\frac{(M-m)^2}{4}
\]
which completes the proof. $\qed$

\smallskip 

\noindent Applying the claim to \eqref{pf-ineq} then gives
\begin{equation*}
 \bigl(\ell_{\rho_0}(\gamma)\bigr)^2
 -
 \bigl(\ell_\rho(\gamma)\bigr)^2 \geq
 \frac{4\delta m}{\sqrt{2n}}
 \bigl(
   \ell_{\rho_0}(\gamma)+\ell_\rho(\gamma)
 \bigr)
\end{equation*}
which implies \eqref{eq:repeated-X} by cancelling the non-zero $\bigl(
   \ell_{\rho_0}(\gamma)+\ell_\rho(\gamma)
 \bigr)$ factor. \end{proof}

\medskip 

\noindent The proof of Proposition \ref{prop:stat} then follows from the arguments in the proofs of Lemma \ref{lem:lall} and the entropy-rigidity for $\pslc$-representations in \S4.2:

\begin{proof}[Proof of Proposition \ref{prop:stat}]
We can consider the closed geodesics to be  oriented, so that they correspond bijectively to conjugacy classes of elements of $\pi_1 S$; since $l^H_{\rho_0}(\gamma) = l^H_{\rho_0}(\gamma^{-1})$, passing to unoriented geodesics would only change the counts below by a factor of $2$.

We first consider the case when $\rho_0$ is an $n$-Fuchsian representation; suppose $\rho_0 = \iota \circ j$, where $j : \pi_1(S) \to \pslr$ is Fuchsian and $\iota : \pslr \to \pslnr$ is the irreducible representation. Then the Hilbert length satisfies $\ell_{\rho_0}^H(\gamma) = (n-1)\, l_j(\gamma)$, where $l_j(\gamma)$ is the hyperbolic length of $\gamma$ on the hyperbolic surface $X = \mathbb{H}^2 / j(\pi_1 S)$: indeed, if $A \in \pslr$ has translation length $\ell$, the eigenvalue moduli of $\iota(A)$ are $e^{(n-1)\ell/2}, e^{(n-3)\ell/2}, \ldots, e^{-(n-1)\ell/2}$. (See, for example, \cite[Example~1.3]{PotrieSambarinoEigenvaluesEntropy}.)
Let $\mathcal{S}_0(L)$ be the set of closed geodesics of Hilbert length (with respect to $\rho_0$) at most $L$; this is precisely the set of closed geodesics of (hyperbolic) length at most $L/(n-1)$ on $X$. By Lemma \ref{lem:lall} (applied with the length bound $L/(n-1)$), there is a $\beta>0$ such that for a positive proportion of the closed geodesics in $\mathcal{S}_0(L)$, the number of times the pattern \eqref{eq:strict-pattern} occurs along a geodesic is at least a $\beta$-proportion of the total geodesic length. Proposition \ref{prop:multgain1}, together with the same argument in \S4.2 (see \eqref{eq:ineq} and the following argument that invokes Lemma \ref{lem:lall}) implies statistical domination. 

Note that the second statement of the proposition involving translation length also follows from this, since for $\rho_0$ $n$-Fuchsian, the closed geodesics of translation length at most $L$ coincides with the set of closed geodesics of (hyperbolic) length at most $\sqrt{6/(n^3-n)}L$ on $X$ (\textit{cf.} Remark (ii) after Theorem \ref{thm:ent}). Once again, Lemma \ref{lem:lall} applies to show that a positive proportion of such geodesics have a $\beta$-proportion of traversals of the pattern \eqref{eq:strict-pattern}. This time we use Proposition \ref{prop:multgain2} and the argument in \S4.2 to conclude that statistical domination holds. 

Now let $\rho_0$ be an arbitrary Hitchin representation. The key facts used in the proof of Lemma \ref{lem:lall} were: (i) closed geodesics of length at most $L$ equidistribute, as $L \to \infty$, with respect to a measure $\mu$ on the unit tangent bundle of the surface, and (ii) the measure $\mu$ has full support. It is known that both hold for the Hilbert length $l^H_{\rho_0}$, with the geodesic flow replaced by the Hilbert flow $\psi^H_t$ defined in the Appendix (see Proposition \ref{prop:app-hilbert-flow}) and with $\mu = \widetilde{m}_H$ its measure of maximal entropy: this is summarized as Proposition \ref{prop:hil-equi} in the Appendix.

We note that to adapt Lemma \ref{lem:lall} the only point to note is that the Hilbert flow differs from the geodesic flow on $X$ (where $X$ is now an auxiliary choice of a hyperbolic metric on $S$). However, since $S$ is compact, the reparametrizing density of $\psi^H_t$ is bounded above and below. Thus, the time spent in an open set $U \subset T^1X$ and number of transversals with respect to both the flows, are uniformly comparable, and the proof of Lemma \ref{lem:lall} goes through. 
\end{proof}

\noindent We can now formally complete:

\begin{proof}[Proof of Theorem \ref{thm:ent}]
By Lemma \ref{lem:stat}, the strict entropy inequality follows immediately from Proposition \ref{prop:stat}. \end{proof}

\noindent \textit{Remark.} As mentioned in the Introduction, in the case of dominating translation-lengths we expect the statistical domination of  Proposition \ref{prop:stat} (and hence Theorem \ref{thm:ent}) in fact hold for an \textit{arbitrary} Hitchin representation $\rho_0$, not just an $n$-Fuchsian one. The additional fact needed is the equidistribution of closed geodesics, when ordered by translation length, to a measure that has \textit{full support} in the unit tangent bundle of $S$ (\textit{cf.} the final remark of the Appendix).

\appendix

\section{Equidistribution of closed curves ordered by Hilbert length}
\label{app:hilbert-equidistribution}

In this appendix, we provide the context and proof of the equidistribution result used in the proof of Theorem \ref{thm:ent} (see the proof of Proposition \ref{prop:stat}). This seems to be  well known to experts; indeed, the proofs below are assembled from the results of \cite{Sam14}. 

We begin by recalling the construction of the \textit{Hilbert flow} associated to a Hitchin representation $\rho_0$.  Throughout, let
$\Gamma=\pi_1(S)$, fix an auxiliary hyperbolic structure on $S$, and let
$\phi_t:T^1S\to T^1S$ be its geodesic flow.  We identify the Gromov
boundary $\partial_\infty\Gamma$ with the ideal boundary of the universal
cover, and let $\partial_\infty^{(2)}\Gamma
  :=\bigl(\partial_\infty\Gamma\times\partial_\infty\Gamma\bigr)
  \setminus\Delta$.
  
As we shall explain in Proposition \ref{prop:app-hilbert-flow}, the Hilbert flow is an $\mathbb{R}$-action on $\Gamma\backslash(\partial_\infty^{(2)}\Gamma \times \mathbb{R})$ that is a  H\"older reparametrization of $\phi_t$ such that the period of a closed orbit corresponding to a closed geodesic $\gamma$ equals the Hilbert length $\ell^H_{\rho_0}(\gamma)$.

\subsection{The Hilbert flow}
We shall use Sambarino's construction of reparametrized geodesic flows
from H\"older cocycles \cite{Sam14}.
We first verify that a Hitchin representation lies in the class of
representations to which Sambarino's results apply.

\begin{lem}\label{lem:app-strict-convex}
A Hitchin representation  $\rho_0:\Gamma\to\pslnr$ is strictly convex in
the sense of \cite[Definition~1.1]{Sam14}.
\end{lem}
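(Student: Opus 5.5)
Recall that \cite[Definition~1.1]{Sam14} calls a representation $\rho_0:\Gamma\to\pslnr$ strictly convex if there are equivariant, continuous maps $\xi:\partial_\infty\Gamma\to\mathbb{P}(\mathbb{R}^n)$ and $\eta:\partial_\infty\Gamma\to\mathbb{P}((\mathbb{R}^n)^*)$ such that $\xi(x)\oplus\ker\eta(y)=\mathbb{R}^n$ for $x\neq y$, and such that $\rho_0$ is projective Anosov (equivalently, the images are H\"older and $\rho_0(\gamma)$ is proximal). The plan is to produce these maps from Labourie's limit curve.

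By \cite{Lab} (recalled in \S2.2), the Hitchin representation $\rho_0$ has a $\rho_0$-equivariant, H\"older continuous limit map $\xi_{\mathrm{full}}:\partial_\infty\Gamma\to\mathcal F(\mathbb{R}^n)$. I would set $\xi(x)$ to be the line $\xi_{\mathrm{full}}(x)^1$. I would set $\eta(x)$ to be the linear functional, up to scale, whose kernel is the hyperplane $\xi_{\mathrm{full}}(x)^{n-1}$. Equivariance and continuity are inherited from $\xi_{\mathrm{full}}$.

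The transversality condition $\xi(x)\oplus \xi_{\mathrm{full}}(y)^{n-1}=\mathbb{R}^n$ for $x\ne y$ follows from the fact that distinct points map to transverse flags. This is the $k=1$ instance of $\xi_{\mathrm{full}}(x)^k\oplus\xi_{\mathrm{full}}(y)^{n-k}=\mathbb{R}^n$. It can be seen either from Labourie's hyperconvexity or from Fock--Goncharov positivity of pairs (and triples) of flags \cite{FG}.

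The remaining requirement is the dynamical one: $\rho_0$ must be Anosov with respect to the first parabolic, i.e. projective Anosov. Labourie showed that Hitchin representations are Borel--Anosov. In particular the splitting $\xi^1\oplus\xi^{n-1}$ of the flat bundle over $T^1S$ is dominated: the geodesic flow contracts $\mathrm{Hom}(\xi^{n-1},\xi^1)$ exponentially. This is exactly projective Anosovness.

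As a consistency check, every $\rho_0(\gamma)$ with $\gamma\ne1$ is then purely loxodromic with distinct real eigenvalue moduli, so it is proximal with attracting line $\xi(\gamma^+)$. I expect the only real subtlety to be matching Sambarino's exact formulation, namely whether he phrases the condition via proximality plus H\"older regularity of the curve or via the Anosov property. If it is the former, I would cite Labourie's H\"older regularity of $\xi_{\mathrm{full}}$ together with proximality of $\rho_0(\gamma)$. Indeed, Sambarino himself lists Hitchin representations as examples, and I would cite that remark as well.
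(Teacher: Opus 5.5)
Your construction of the pair $(\xi_1,\xi_{n-1})$ from Labourie's H\"older limit curve, with $\xi_1(x)\oplus\xi_{n-1}(y)=\mathbb{R}^n$ for $x\neq y$, is exactly what the paper uses. The gap is that you are verifying the wrong definition. Strict convexity in \cite[Definition~1.1]{Sam14}, as the paper applies it, asks for such a H\"older, equivariant, transverse pair \emph{together with irreducibility of $\rho_0$}. It does not ask for projective Anosovness or proximality; the link between $\xi_1(\gamma^+)$ and the top eigenvalue is derived afterwards, cf.\ the use of \cite[Corollary~5.3]{Sam14} later in the appendix. Your proposal never checks irreducibility, and it does not follow from what you do check. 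For example, let $j$ be Fuchsian, lifted to $\mathrm{SL}_2(\mathbb{R})$, with boundary map $\xi_j:\partial_\infty\Gamma\to\mathbb{P}(\mathbb{R}^2)$. Then $j\oplus 1:\Gamma\to\mathrm{PSL}_3(\mathbb{R})$ admits the H\"older equivariant transverse pair $x\mapsto\xi_j(x)\subset\mathbb{R}^2\oplus 0$, $\ker\eta(y)=\xi_j(y)\oplus\mathbb{R}e_3$, and is projective Anosov, yet it preserves $\mathbb{R}^2\oplus 0$. So your Anosov paragraph cannot stand in for the missing condition.

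The repair is short. You can cite Labourie \cite{Lab} for irreducibility of Hitchin representations, as the paper does. Alternatively, you can derive it from facts you already invoke. Suppose $V\neq 0$ is $\rho_0(\Gamma)$-invariant. Then $V\not\subset\xi_{n-1}(\gamma^-)$ for some $\gamma$. Otherwise, density of repelling fixed points and continuity give $V\subset\xi_{n-1}(x)$ for every $x$, contradicting $\bigcap_{i=1}^{n}\xi_{n-1}(x_i)=0$ for distinct $x_i$ (hyperconvexity of the dual curve). Proximality of $\rho_0(\gamma)$, whose repelling hyperplane is $\xi_{n-1}(\gamma^-)$, then gives $\xi_1(\gamma^+)\subset V$. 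Equivariance and minimality of the action on $\partial_\infty\Gamma$ give $\xi_1(x)\subset V$ for all $x$. Finally, hyperconvexity of $\xi_1$ forces $V=\mathbb{R}^n$. With this added your argument is complete, and the projective-Anosov step becomes superfluous but harmless. Appealing to Sambarino's list of examples, as the paper also does, is a fine supplement but not a substitute for checking irreducibility.
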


\begin{proof}
By Labourie \cite{Lab}, the representation $\rho_0$ is irreducible and
admits a H\"older-continuous, $\rho_0$-equivariant limit map 
$ \xi:\partial_\infty\Gamma\longrightarrow\mathcal F(\mathbb R^n)$
which satisfies
\[
  \mathbb R^n=\xi_1(x)\oplus\xi_{n-1}(y)
  \qquad\text{whenever }x\ne y.
\]
where $\xi_1$ and $\xi_{n-1}$ are its line and hyperplane
components.
Thus the pair $(\xi_1,\xi_{n-1})$, together with irreducibility, satisfies the definition of a strictly convex representation in
\cite[Definition~1.1]{Sam14}; see also the discussion of Hitchin
representations in \cite[\S~1]{Sam14}.
\end{proof}

Fix a norm on $\mathbb R^n$ and the dual norm on $(\mathbb R^n)^*$.
For an element of $\pslnr$, choose a determinant-one representative;
the expressions below are independent of this choice.  Following \cite[\S~6]{Sam14} define
the H\"{o}lder cocycles $\beta_1,\bar\beta_1:\Gamma\times\partial_\infty\Gamma\to\mathbb R$ by
\begin{equation}\label{eq:app-cocycles}
  \beta_1(\gamma,x)
  :=\log\frac{\|\rho_0(\gamma)v\|}{\|v\|},
  \qquad
  \bar\beta_1(\gamma,x)
  :=\log\frac{\|\rho_0(\gamma)^*w\|}{\|w\|},
\end{equation}
where $v\in\xi_1(x)\setminus\{0\}$, where
$w\in(\mathbb R^n)^*\setminus\{0\}$ satisfies $\ker w=\xi_{n-1}(x)$, and
$\rho_0(\gamma)^*w=w\circ\rho_0(\gamma)^{-1}$, and  set
\[
  c_H:=\beta_1+\bar\beta_1.
\]

In what follows, for every nontrivial $\gamma\in\Gamma$, let
$\gamma^+,\gamma^-\in\partial_\infty\Gamma$ denote respectively its
attracting and repelling fixed points.  Equivalently, after identifying
$\partial_\infty\Gamma$ with $\partial_\infty\mathbb H^2$ using the
auxiliary hyperbolic structure, these are the forward and backward
endpoints of the axis of $\gamma$. For the next lemma, the \textit{period} of $\gamma$ with respect to a H\"{o}lder cocycle $c_H$ is  $\ell_{c_H}(\gamma):=c_H(\gamma,\gamma^+)$; these shall be the periods of the Hilbert flow in Proposition \ref{prop:app-hilbert-flow}.  

\begin{lem}\label{lem:app-hilbert-cocycle}
For every nontrivial $\gamma\in\Gamma$, and the H\"{o}lder cocycle $c_H$ as above,  the period  of $\gamma$ with respect to $c_H$ is precisely the 
Hilbert length with respect to the Hitchin representation $\rho_0$:
\begin{equation}\label{eq:app-hilbert-period}
  \ell_{c_H}(\gamma)
  :=c_H(\gamma,\gamma^+)
  =\ell^H_{\rho_0}(\gamma).
\end{equation}
In particular, all nonzero periods of $c_H$ are positive.
\end{lem}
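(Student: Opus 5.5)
The plan is to compute the two cocycles separately at the attracting fixed point $x=\gamma^+$, and then add them.

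First, $\beta_1(\gamma,\gamma^+)$. Since $\xi$ is $\rho_0$-equivariant, $\xi(\gamma^+)$ is fixed by $\rho_0(\gamma)$. By Labourie's theorem, $\rho_0(\gamma)$ is purely loxodromic: its eigenvalues are real with distinct moduli $|\lambda_1|<\cdots<|\lambda_n|$. Moreover $\xi(\gamma^+)$ is its attracting flag, so the line $\xi_1(\gamma^+)$ is the eigenline of the top eigenvalue $\lambda_n$. Taking $v\in\xi_1(\gamma^+)$ gives $\rho_0(\gamma)v=\lambda_n v$. Hence $\beta_1(\gamma,\gamma^+)=\log|\lambda_n|$, and the choice of norm plays no role.

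Second, $\bar\beta_1(\gamma,\gamma^+)$. Here $\xi_{n-1}(\gamma^+)$ is the $\rho_0(\gamma)$-invariant hyperplane of the attracting flag, namely the span of the eigenvectors for $\lambda_n,\dots,\lambda_2$. A functional $w$ with this kernel is an eigenvector of the transpose $\rho_0(\gamma)^{T}$ with eigenvalue $\lambda_1$: indeed $w\circ\rho_0(\gamma)$ also kills $\xi_{n-1}(\gamma^+)$, so it is a multiple of $w$. Evaluating on the $\lambda_1$-eigenvector identifies the multiplier as $\lambda_1$. Therefore $\rho_0(\gamma)^*w=w\circ\rho_0(\gamma)^{-1}=\lambda_1^{-1}w$, and so $\bar\beta_1(\gamma,\gamma^+)=-\log|\lambda_1|$.

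Adding the two gives
\[
c_H(\gamma,\gamma^+)=\log|\lambda_n|-\log|\lambda_1|=\log\left|\frac{\lambda_n}{\lambda_1}\right|=\ell^H_{\rho_0}(\gamma).
\]
This is independent of the determinant-one lift, because rescaling by a root of unity does not change any modulus. Positivity of the periods follows from $|\lambda_n|>|\lambda_1|$ for nontrivial $\gamma$: a Hitchin representation is discrete and faithful with loxodromic images (equivalently, strictly convex as in Lemma \ref{lem:app-strict-convex}), so $|\lambda_n|>|\lambda_1|$.

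The only point needing care is the identification of $\xi(\gamma^+)$ as the attracting flag and of $\xi_{n-1}(\gamma^+)$ as the sum of the top $n-1$ eigenspaces. This follows from the dynamics-preserving property of Labourie's limit map, i.e. the Anosov property, and I expect to cite it, e.g. \cite{Lab} or \cite[\S 6]{Sam14}, rather than reprove it.
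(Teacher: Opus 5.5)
Your proof is correct and follows the same route as the paper: evaluate $\beta_1$ and $\bar\beta_1$ at $\gamma^+$ to get $\log|\lambda_n|$ and $-\log|\lambda_1|$, then add. The only difference is that the paper cites \cite[Corollary~5.3]{Sam14} and \cite[Lemma~6.3]{Sam14} for these two evaluations and for positivity, whereas you verify them directly from the eigenvector description of the attracting flag $\xi(\gamma^+)$ and from the distinct eigenvalue moduli.
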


\begin{proof}
As earlier in the paper, let  $\lambda_1 \leq \lambda_2 \leq\cdots\leq\lambda_n$ 
be the eigenvalue moduli of the  determinant-one
representative of $\rho_0(\gamma)$.  By \cite[Corollary~5.3]{Sam14},
$\beta_1(\gamma,\gamma^+)= \log \lambda_n $ and by \cite[Lemma~6.3]{Sam14}, we have $\bar\beta_1(\gamma,\gamma^+)
  =-\log \lambda_1$
since the largest eigenvalue in modulus of $\rho_0(\gamma)^{-1}$ is the smallest eigenvalue in modulus of $\rho_0(\gamma)$. 

Adding these we obtain 
\[
  \ell_{c_H}(\gamma)
  =\log \lambda_n-\log\lambda_1
  =\ell^H_{\rho_0}(\gamma)
\]
which proves \eqref{eq:app-hilbert-period}.  Moreover,
\cite[Corollary~5.3]{Sam14}, applied to $\gamma$ and to $\gamma^{-1}$,
shows that both
$\beta_1$ and $\bar\beta_1$ are positive, and hence so is their sum.
\end{proof}

Let $[\Gamma]$ denote the set of non-trivial conjugacy classes in $\Gamma$, and
define the exponential growth rate of $c_H$ by
\[
  h_H:=h_{c_H}
  :=\limsup_{T\to\infty}\frac{1}{T}
  \log\#\bigl\{[\gamma]\in[\Gamma] \ \vert\ 
  \ell_{c_H}(\gamma)\leq T\bigr\}.
\]

\begin{lem}\label{lem:app-hilbert-growth}
The Hilbert cocycle has finite positive exponential growth rate $ 0<h_H<\infty$. 
\end{lem}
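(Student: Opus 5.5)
To prove $0<h_H<\infty$, I would compare the Hilbert length $\ell^H_{\rho_0}$ with the hyperbolic length $l_X$ of the auxiliary hyperbolic structure. The aim is two-sided bounds
\[
 c^{-1}\, l_X(\gamma) - C \;\leq\; \ell^H_{\rho_0}(\gamma) \;\leq\; c\, l_X(\gamma)
\]
for constants $c\geq 1$, $C\geq 0$ independent of $\gamma$. The finiteness and positivity of $h_H$ then follow from the classical fact that the number of conjugacy classes with $l_X(\gamma)\leq T$ grows like $e^{T}/T$, so the hyperbolic growth rate equals $1$.

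For the upper bound, I would fix a finite symmetric generating set of $\Gamma$ and write $|\gamma|$ for the cyclically reduced word length. Since $\log\|\rho_0(g)\|$ and $\log\|\rho_0(g)^{-1}\|$ are bounded over the generators, the spectral radius (Gelfand's formula, as in Lemma \ref{lem:linalg}) gives $\log\lambda_n\leq C_1|\gamma|$. The same bound for $\gamma^{-1}$ gives $-\log\lambda_1\leq C_1|\gamma|$. Since $|\gamma|$ is comparable to $l_X(\gamma)$ by the \v{S}varc--Milnor lemma, this yields $\ell^H_{\rho_0}\leq c\,l_X$. Consequently $\#\{\ell^H\leq T\}\geq\#\{l_X\leq T/c\}$, which grows exponentially, and so $h_H\geq 1/c>0$.

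For the lower bound, which is the main point, I would use that $\rho_0$ is Borel--Anosov by Labourie \cite{Lab}. Anosov representations have uniform eigenvalue gaps along closed geodesics: there is $\epsilon>0$ with $\log(\lambda_{i+1}/\lambda_i)\geq \epsilon\, l_X(\gamma)$ for each $i$. Summing over $i$ gives $\ell^H_{\rho_0}(\gamma)\geq (n-1)\epsilon\, l_X(\gamma)$. Alternatively, and staying within the appendix's framework, I would cite \cite{Sam14}: for a strictly convex representation (Lemma \ref{lem:app-strict-convex}), the cocycle $\beta_1$, and likewise $\bar\beta_1$, is cohomologous to a positive H\"older function, so that its periods are bounded below by a positive multiple of $l_X(\gamma)$, and \cite{Sam14} shows $h_{\beta_1}<\infty$. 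Then $\#\{\ell_{c_H}\leq T\}\leq\#\{l_X\leq T/\kappa\}$, giving $h_H\leq 1/\kappa<\infty$.

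The main obstacle is the uniform linear lower bound on periods. Lemma \ref{lem:app-hilbert-cocycle} only gives pointwise positivity, which is not enough. This is exactly where the Anosov, or strict convexity, property must be invoked, through the result in \cite{Sam14} that the cocycle has a positive H\"older representative and hence a reparametrized flow with bounded-below speed.
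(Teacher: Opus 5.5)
Your argument is correct, but it takes a different route from the paper. The paper never compares with the auxiliary hyperbolic length. Positivity $h_H>0$ is quoted from \cite[Corollary~3.6]{Sam14}, which only needs the positivity of periods (Lemma~\ref{lem:app-hilbert-cocycle}). Finiteness comes from the one-line inequality $\ell_{c_H}(\gamma)=\log\lambda_n-\log\lambda_1\geq\log\lambda_n$ (since $\lambda_1\leq 1$ for a determinant-one lift). So the count for $c_H$ is dominated by the count for the spectral radius, whose growth rate is finite by \cite[Proposition~5.4]{Sam14} for strictly convex representations.

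You instead prove a two-sided comparison $\kappa\,l_X\leq\ell^H_{\rho_0}\leq c\,l_X$ and transfer the growth rate of hyperbolic closed geodesics. Your upper bound (submultiplicativity of norms plus \v{S}varc--Milnor) is fully elementary and replaces the appeal to Corollary~3.6. Your lower bound takes the Anosov input directly from \cite{Lab}: uniform contraction of the appropriate $\mathrm{Hom}$-bundle, evaluated along the closed orbits of $\gamma^k$ to remove the multiplicative constant. That is exactly the input the paper outsources to \cite[Proposition~5.4]{Sam14}. Your route gives explicit bounds $1/c\leq h_H\leq 1/\kappa$ and does not depend on Sambarino's counting results. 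The paper's route is shorter and stays inside the framework it needs anyway for \cite[Theorem~3.2]{Sam14}.

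Your ``alternative'' via a positive H\"older representative is a detour. Once $h_{\beta_1}<\infty$ is granted, $\ell_{c_H}\geq\ell_{\beta_1}=\log\lambda_n$ gives $h_H\leq h_{\beta_1}$ directly, which is precisely the paper's proof.

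One small imprecision: in a surface group, ``cyclically reduced word length'' is not the right quantity, since the relator itself is a cyclically reduced word representing the identity. Take $|\gamma|$ to be the minimal word length over the conjugacy class. Choose a conjugate that moves a base point by at most $l_X(\gamma)+2D$, where $D$ is the diameter of a fundamental domain; \v{S}varc--Milnor then gives $|\gamma|\leq C\,l_X(\gamma)+C'$. The additive constant $C'$ can be absorbed using the systole.
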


\begin{proof}
By Lemma~\ref{lem:app-hilbert-cocycle}, the cocycle $c_H$ has positive
periods.  Therefore \cite[Corollary~3.6]{Sam14} gives $h_H>0$.
Since
\[
  \ell_{c_H}(\gamma)
  =\log \lambda_n(\rho_0(\gamma))- \log\lambda_1(\rho_0(\gamma))
  \geq \log \lambda_n(\rho_0(\gamma)).
\]
we have
\[
  \#\bigl\{[\gamma]:\ell_{c_H}(\gamma)\leq T\bigr\}
  \leq
  \#\bigl\{[\gamma]:\log \lambda_n(\rho_0(\gamma))\leq T\bigr\}.
\]
The exponential growth rate of the quantity on the right is finite by
\cite[Proposition~5.4]{Sam14}, since $\rho_0$ is strictly convex by
Lemma~\ref{lem:app-strict-convex}.  This proves $h_H<\infty$.
\end{proof}

\noindent We can now construct the Hilbert flow, whose periods are the Hilbert lengths.

\begin{prop}[The Hilbert flow]\label{prop:app-hilbert-flow}
Let $\Gamma$ act on $\partial_\infty^{(2)}\Gamma\times\mathbb R$ by
\begin{equation}\label{eq:app-boundary-action}
  \eta\cdot(x,y,s)
  :=\bigl(\eta x,\eta y,s-c_H(\eta,y)\bigr).
\end{equation}
This action is properly discontinuous and cocompact, with the quotient 
\[
  X_H:=\Gamma\backslash
  \bigl(\partial_\infty^{(2)}\Gamma\times\mathbb R\bigr)
\]
admitting the translation flow
\[
  [x,y,s]\longmapsto[x,y,s-t]
\]
that satisfies:
\begin{itemize}
    \item[(i)] The translation flow on $X_H$ 
is H\"older conjugate to a H\"older reparametrization of $\phi_t$ on
$T^1S$.  (In what follows, we use this conjugacy to regard it as a flow $\psi^H_t$ on $T^1S$.)

\item[(ii)] The topological entropy of $\psi^H_t$ is $h_H$.

\item[(iii)] The  closed
orbits of the flow correspond to  closed geodesics
on $S$, and the orbit corresponding to a geodesic $\gamma$ has period
$\ell^H_{\rho_0}(\gamma)$. 

\item[(iv)] $\psi^H_t$ has a unique invariant
probability measure of maximal entropy, denoted by $\widetilde m_H$.
\end{itemize}
\end{prop}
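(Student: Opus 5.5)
The plan is to assemble the statement from Sambarino's general theory of reparametrizations coming from H\"older cocycles \cite{Sam14}, feeding in Lemmas \ref{lem:app-strict-convex}, \ref{lem:app-hilbert-cocycle} and \ref{lem:app-hilbert-growth}.

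\textbf{Step 1: the $\Gamma$-action.} First we check that \eqref{eq:app-boundary-action} really is a group action. This follows from the cocycle identity $c_H(\eta\eta',y)=c_H(\eta,\eta' y)+c_H(\eta',y)$. That identity holds for each of $\beta_1$ and $\bar\beta_1$ directly from \eqref{eq:app-cocycles}, since the norm ratios telescope, and so it holds for their sum. Next, by Lemma \ref{lem:app-hilbert-cocycle} all periods of $c_H$ are positive, and by Lemma \ref{lem:app-hilbert-growth} the exponential growth rate $h_H$ is finite and positive. These are exactly the hypotheses of Sambarino's reparametrization theorem (\cite[Theorem~3.2]{Sam14}, building on Ledrappier). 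That theorem gives that the action is properly discontinuous and cocompact, and that the translation flow on $X_H$ is H\"older conjugate to a H\"older reparametrization of $\phi_t$ on $T^1S$. This proves the first assertion and item (i).

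\textbf{Step 2: periods and closed orbits, item (iii).} A closed orbit of the translation flow lifts to an orbit of $\{(x,y)\}\times\mathbb R$ preserved by some $\eta\in\Gamma$. This forces $\{x,y\}=\{\eta^-,\eta^+\}$, so closed orbits correspond to conjugacy classes of primitive elements, equivalently to closed geodesics. The period is the shift in the $\mathbb R$-coordinate, namely $c_H(\eta,\eta^+)$ up to the orientation convention. By \eqref{eq:app-hilbert-period} this equals $\ell^H_{\rho_0}(\gamma)$. The one point needing care is matching the sign convention $s-c_H(\eta,y)$ with the choice of attracting versus repelling endpoint. If the signs come out mismatched, we replace $c_H$ by the cocycle obtained from $\gamma\mapsto\gamma^{-1}$, which is harmless because $\ell^H_{\rho_0}(\gamma)=\ell^H_{\rho_0}(\gamma^{-1})$.

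\textbf{Step 3: entropy and the measure of maximal entropy, items (ii) and (iv).} A H\"older reparametrization of the geodesic flow of a closed negatively curved surface is a topologically transitive metric Anosov flow. For such flows, the topological entropy equals the exponential growth rate of periods, by Bowen's counting of periodic orbits. Combined with Step 2, this gives entropy $h_H$, which is item (ii); Sambarino states this as \cite[Theorem~3.2]{Sam14}. For item (iv), Bowen's theorem on the uniqueness of equilibrium states for transitive Anosov/Axiom A flows yields a unique measure of maximal entropy $\widetilde m_H$. Equivalently, one can construct it as the Bowen--Margulis measure from the Patterson--Sullivan densities of $c_H$ and its dual cocycle, as in \cite[\S3]{Sam14}.

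The main obstacle is Step 1: verifying that $c_H$ satisfies exactly the hypotheses of Sambarino's reparametrization theorem, namely H\"older regularity in $y$, positivity of periods, and finite growth rate. These depend on the strict convexity established in Lemma \ref{lem:app-strict-convex} and the H\"older regularity of the limit map $\xi$. Once those hypotheses are in place, everything else is bookkeeping.
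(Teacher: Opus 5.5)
Your proposal is correct and follows essentially the same route as the paper. You use Lemmas~\ref{lem:app-hilbert-cocycle} and~\ref{lem:app-hilbert-growth} to verify the hypotheses of \cite[Theorem~3.2]{Sam14}, which gives properness, cocompactness, (i) and (ii); you read the periods of closed orbits off the action for (iii); and you get uniqueness of the measure of maximal entropy from the fact that $\psi^H_t$ is a H\"older reparametrization of an Anosov flow, which the paper cites as \cite[Corollary~2.7]{Sam14}. Your sign worry in Step 2 is unnecessary: the cocycle identity gives $c_H(\eta,\eta^+)>0>c_H(\eta,\eta^-)=-\ell^H_{\rho_0}(\eta^{-1})$, so a positive shift automatically forces $y=\eta^+$.
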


\begin{proof}
Lemmas~\ref{lem:app-hilbert-cocycle} and
\ref{lem:app-hilbert-growth} verify the hypotheses of Sambarino's
reparametrizing theorem \cite[Theorem~3.2]{Sam14}.  Part~(1) of that
theorem implies the properness and cocompactness of the action
\eqref{eq:app-boundary-action}, together with (i) and (ii). 

For a closed geodesic $\gamma$, the line
$\{(\gamma^-,\gamma^+,s):s\in\mathbb R\}$ projects to a closed orbit, and \eqref{eq:app-boundary-action} shows that its period is
$c_H(\gamma,\gamma^+)=\ell^H_{\rho_0}(\gamma)$.  Conversely, every
closed orbit arises in this way -- see for example the proof of \cite[Corollary~4.1]{Sam14}.  Since $\psi^H_t$ is a
H\"older reparametrization of an Anosov geodesic flow, the existence and
uniqueness of its measure of maximal entropy follow from
\cite[Corollary~2.7]{Sam14}.
\end{proof}

\noindent \textit{Remark.} We shall refer to the measure of
maximal entropy  $m_H$ for the Hilbert flow in (iv) of the above Proposition as the \emph{Bowen--Margulis measure} of the Hilbert flow.

\subsection{Support of the Bowen-Margulis measure}

\begin{lem}\label{lem:app-full-support}
The Bowen-Margulis measure $\widetilde m_H$ in Proposition \ref{prop:app-hilbert-flow} has full support in $T^1S$.
\end{lem}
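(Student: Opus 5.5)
We prove full support via the Gibbs/product structure of the measure of maximal entropy of a Hölder reparametrization of a transitive Anosov flow. By Proposition \ref{prop:app-hilbert-flow}(i), $\psi^H_t$ is Hölder conjugate to a Hölder reparametrization of the geodesic flow $\phi_t$ on $T^1S$. The latter is a topologically mixing (in particular transitive) Anosov flow. Full support is invariant under a topological conjugacy, so it suffices to show that the measure of maximal entropy of a Hölder reparametrization $\phi^r_t$ of $\phi_t$, with positive Hölder roof function $r$, charges every nonempty open set.

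\emph{Main route.} Write the measure of maximal entropy of $\phi^r_t$ in terms of an equilibrium state of $\phi_t$. Let $m_{-h_H r}$ be the equilibrium state of $\phi_t$ for the Hölder potential $-h_H r$, normalized so that its pressure vanishes. The standard correspondence, recorded in \cite[\S2]{Sam14}, gives
\[
d\widetilde m_H=\frac{r\,dm_{-h_H r}}{\int r\,dm_{-h_H r}}
\]
up to the conjugacy. Since $r$ is continuous and positive, $\widetilde m_H$ and $m_{-h_H r}$ are equivalent measures, so they have the same support.

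\emph{Support of the equilibrium state.} It remains to show that an equilibrium state of a Hölder potential for a transitive Anosov flow has full support. We use the Bowen--Ruelle theory via a Markov partition, or equivalently the Gibbs property. For every $x\in T^1S$, small $\epsilon>0$ and $T>0$, the Bowen ball $B(x,\epsilon,T)$ satisfies
\[
m(B(x,\epsilon,T))\geq C^{-1}\exp\Bigl(\int_0^T(\varphi(\phi_s x)-P)\,ds\Bigr)>0 .
\]
Every nonempty open set contains such a Bowen ball, so it has positive measure.

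\emph{Alternative via periodic orbits.} By Proposition \ref{prop:hil-equi} (or directly from \cite{Sam14}), $\widetilde m_H$ is the weak-$*$ limit of normalized averages of periodic orbit measures, weighted by Hilbert length. Full support could then be obtained from density of periodic orbits together with the specification property. However, passing to the limit only gives an upper bound on support by this method, so the Gibbs route is cleaner.

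\emph{Main obstacle.} The main obstacle is bookkeeping rather than substance. One must check that Sambarino's identification, which passes from the translation flow on $X_H$ to a reparametrization of $\phi_t$, carries the measure of maximal entropy to the normalized $r\cdot m_{-h_H r}$. Concretely, by Abramov's formula, $\mu\mapsto r\mu/\int r\,d\mu$ is a bijection between invariant measures of $\phi_t$ and of $\phi^r_t$, under which $h_{\phi^r}(\widetilde\mu)=h_\phi(\mu)/\int r\,d\mu$. Maximizing the entropy of $\phi^r_t$ therefore amounts to finding $\mu$ with $h_\phi(\mu)-h_H\int r\,d\mu=0$ maximal, that is, the equilibrium state for $-h_H r$, using $P(-h_H r)=0$ from Proposition \ref{prop:app-hilbert-flow}(ii). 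Once this is recorded, positivity of $r$ and the Gibbs property finish the proof.
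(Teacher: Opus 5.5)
Your argument is correct, but it takes a genuinely different route from the paper.

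\textbf{The paper's route.} The paper works upstairs, in the coordinates of Sambarino's construction.
\begin{enumerate}
\item By \cite[Theorem~3.2(2)]{Sam14}, the lift of $\widetilde m_H$ to $\partial_\infty^{(2)}\Gamma\times\mathbb R$ is, up to a constant, $e^{-h_H[\cdot,\cdot]}\,\bar\nu\otimes\nu\otimes ds$.
\item The Patterson--Sullivan measures $\nu,\bar\nu$ have $\Gamma$-invariant supports, because their Radon--Nikodym derivatives are strictly positive.
\item Minimality of the action of $\Gamma$ on $\partial_\infty\Gamma$ then forces both supports to be the whole boundary.
\item The positive density, the openness of the quotient map, and the H\"older conjugacy carry full support down to $T^1S$.
\end{enumerate}

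\textbf{Your route.} You stay on $T^1S$ and use thermodynamic formalism.
\begin{enumerate}
\item Abramov's formula identifies $\widetilde m_H$ with the normalized measure $r\,m_{-h_H r}$.
\item The vanishing pressure $P(-h_H r)=0$ follows from the variational principle and Proposition~\ref{prop:app-hilbert-flow}(ii). It is a consequence, not a normalization as your first paragraph phrases it, though you state it correctly at the end.
\item Positivity of $r$ makes $\widetilde m_H$ and $m_{-h_H r}$ equivalent.
\item Bowen's Gibbs lower bound on Bowen balls, for the unique equilibrium state of a H\"older potential on a transitive Anosov flow, gives every open set positive mass.
\end{enumerate}

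\textbf{What each buys.} Your argument is more general: it shows that the measure of maximal entropy of any H\"older reparametrization of a transitive Anosov flow has full support. It needs neither the Patterson--Sullivan/Gromov-product description of the lifted measure nor minimality of the boundary action. The cost is the heavier Bowen--Ruelle machinery (uniqueness and the Gibbs property of equilibrium states). The paper instead exploits the explicit product structure supplied by Sambarino's theorem, reducing the claim to quasi-invariance plus minimality. You are also right to discard the periodic-orbit approach, since weak-$*$ limits only control the support from above.
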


\begin{proof}
Let
\[
  q:\partial_\infty^{(2)}\Gamma\times\mathbb R\longrightarrow X_H
\]
be the quotient map.  Under the identification of Proposition~\ref{prop:app-hilbert-flow},
\cite[Theorem~3.2(2)]{Sam14} states that the $\Gamma$-invariant measure
which induces $\widetilde m_H$ on $X_H$ is,
up to multiplication by a positive constant,
\begin{equation}\label{eq:app-bms-lift}
  e^{-h_H[\,\cdot,\cdot\,]}
  \,\bar\nu\otimes\nu\otimes ds.
\end{equation}
Here $\nu$ and $\bar\nu$ are the Patterson--Sullivan measures
associated respectively to $c_H$ and to a dual cocycle, and
$[\,\cdot,\cdot\,]$ is a Gromov product for
this pair of cocycles.

By  \cite[p.~451]{Sam14}, such a Patterson--Sullivan measure $\mu$ associated to a cocycle $c$ satisfies the 
 Radon--Nikodym derivative
\[
  \frac{d(\gamma_*\mu)}{d\mu}(x) = e^{-h_c c(\gamma^{-1},x)}
\]
for each $\gamma\in\Gamma$.

The existence and uniqueness of this Patterson--Sullivan measure
are recorded in \cite[Theorem~3.1]{Sam14}, and attributed to Ledrappier.  Note that from the above expression, if the topological entropy $h_c$ is finite (\textit{cf.} Lemma \ref{lem:app-hilbert-growth}) the  Radon--Nikodym derivative is strictly positive for
$x \in \partial_\infty\Gamma$.  In particular, for any  Borel set
$E\subset\partial_\infty\Gamma$,
\[
  (\gamma_*\mu)(E)
  =\int_E  e^{-h_c c(\gamma^{-1},x)}\,d\mu,
\]
and hence
\[
  (\gamma_*\mu)(E)=0
  \quad\Longleftrightarrow\quad
  \mu(E)=0
\]
showing that $\gamma_*\mu$ and $\mu$ have the same support.  Moreover, since
$\gamma$ acts on $\partial_\infty\Gamma$ by a homeomorphism, we have $ \operatorname{supp}(\gamma_*\mu)
  =\gamma\bigl(\operatorname{supp}\mu\bigr)$ which implies that
\[
  \gamma\bigl(\operatorname{supp}\mu\bigr)
  =\operatorname{supp}\mu
\]
for each $\gamma \in \Gamma$, and hence $\operatorname{supp}\mu$ is a nonempty closed $\Gamma$-invariant subset of $\partial_\infty\Gamma$.

However, the action of the nonelementary hyperbolic group $\Gamma$ on its Gromov boundary is known to be minimal
\cite[Proposition~4.2(2)]{KB02}. 

Applying this discussion to $\nu$ and $\bar\nu$ in our setting, we obtain that both these have full support (equal to $\partial_\infty\Gamma$). Since the density in \eqref{eq:app-bms-lift} is strictly positive on
$\partial_\infty^{(2)}\Gamma$, the measure in
\eqref{eq:app-bms-lift} assigns positive mass to every nonempty open
subset of $\partial_\infty^{(2)}\Gamma\times\mathbb R$. Finally, every nonempty open subset of
$X_H$ has a nonempty open inverse image (since the quotient map $q$ is an open map)  and from the above discussion has positive
$\widetilde m_H$-measure.  Thus $\widetilde m_H$ has full support on
$X_H$, and the H\"older conjugacy in
Proposition~\ref{prop:app-hilbert-flow} transfers this property to
$T^1S$.
\end{proof}

\subsection{Equidistribution of closed orbits}

For $L>0$, let $\mathcal S_0(L)$ be the set of closed
geodesics $\gamma$ on $S$ satisfying
$\ell^H_{\rho_0}(\gamma)\leq L$.  For such a geodesic, let
$\widetilde{\mathcal L}_\gamma$ be the invariant probability measure on
the corresponding $\psi^H_t$-orbit.  Thus, for every continuous
function $F:T^1S\to\mathbb R$,
\begin{equation}\label{eq:app-orbit-measure}
  \int_{T^1S}F\,d\widetilde{\mathcal L}_\gamma
  =\frac{1}{\ell^H_{\rho_0}(\gamma)}
  \int_0^{\ell^H_{\rho_0}(\gamma)}
  F\bigl(\psi^H_t(v_\gamma)\bigr)\,dt,
\end{equation}
where $v_\gamma$ is any point on that closed orbit.

Before we state the equidistribution result, we note that by \cite[Corollary~4.1]{Sam14} (the hypotheses of which are satisfied by Lemmas~\ref{lem:app-hilbert-cocycle} and
\ref{lem:app-hilbert-growth}) we obtain the asymptotic count
\begin{equation}\label{eq:app-prime-orbit}
  h_H L e^{-h_HL}\,\lvert\mathcal S_0(L)\rvert
  \longrightarrow 1
  \qquad\text{as }L\to\infty.
\end{equation}
Note that although \cite[Corollary~4.1]{Sam14} states a count of conjugacy classes of \textit{primitive} geodesics, it is easy to derive the asymptotic count in \eqref{eq:app-prime-orbit}. This is because the {non-primitive} geodesics are \textit{positive} powers of primitive geodesics, and contribute a count that is bounded by half the exponential rate by \cite[Corollary~4.1]{Sam14}. 
In fact, one consequence of this count is that in the definition of Hilbert entropy (see Definition \ref{defn:ent}) the $\limsup$ can be replaced by a limit. 
\medskip

\noindent Moreover, we have: 

\begin{lem}
\label{lem:app-weighted-equidistribution}
As $L\to\infty$,
\begin{equation}\label{eq:app-weighted-equidistribution}
  h_H L e^{-h_HL}
  \sum_{\gamma\in\mathcal S_0(L)}
  \widetilde{\mathcal L}_\gamma
  \xrightarrow{\;\ast\;}
  \widetilde m_H.
\end{equation}
\end{lem}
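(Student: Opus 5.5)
The plan is to derive \eqref{eq:app-weighted-equidistribution} from two ingredients: the classical equidistribution of periodic orbits for hyperbolic flows with a unique measure of maximal entropy, applied to $\psi^H_t$, and the asymptotic count \eqref{eq:app-prime-orbit}. Write $\mathcal S_0(L)=\mathcal P_0(L)\sqcup\mathcal N_0(L)$, where $\mathcal P_0(L)$ consists of the primitive and $\mathcal N_0(L)$ of the non-primitive closed geodesics of Hilbert length at most $L$. The target measure $\widetilde m_H$ is the probability measure from Proposition~\ref{prop:app-hilbert-flow}(iv).

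\textbf{Step 1 (dynamical input).} By Proposition~\ref{prop:app-hilbert-flow}(i), $\psi^H_t$ is H\"older conjugate to a H\"older reparametrization of the Anosov geodesic flow $\phi_t$. Such a reparametrization is again a transitive hyperbolic (Axiom A, basic set) flow. Bowen's theory therefore applies, and I would invoke either Bowen's equidistribution theorem for hyperbolic flows (\cite{Bowen-eq}) or directly the equidistribution statement in \cite{Sam14}, which is proved for exactly these reparametrized flows. Both require topological mixing, or equivalently non-arithmeticity of the period spectrum.

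\textbf{Step 2 (mixing).} I would verify mixing by showing that the Hilbert lengths $\{\ell^H_{\rho_0}(\gamma)\}$ do not lie in a discrete subgroup $a\mathbb Z$. I expect this to be the main obstacle. The first thing I would try is to quote the non-arithmeticity result in \cite{Sam14} for strictly convex representations, whose hypotheses hold by Lemma~\ref{lem:app-strict-convex}. Failing that, I would argue directly: for Zariski dense (or irreducible) $\rho_0$, Benoist's results give that the Jordan projections generate a dense subgroup of the Cartan subspace, and applying the linear functional $\lambda_n-\lambda_1$ yields a dense set of periods.

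\textbf{Step 3 (normalized equidistribution).} With mixing established, Bowen's argument using specification and uniqueness of the measure of maximal entropy gives
\[
\frac{1}{|\mathcal P_0(L)|}\sum_{\gamma\in\mathcal P_0(L)}\widetilde{\mathcal L}_\gamma\xrightarrow{\;\ast\;}\widetilde m_H .
\]
The idea behind this is that any weak-$\ast$ limit is an invariant probability measure whose entropy equals $h_H$, by counting and Proposition~\ref{prop:app-hilbert-flow}(ii). Uniqueness in Proposition~\ref{prop:app-hilbert-flow}(iv) then forces the limit to be $\widetilde m_H$.

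\textbf{Step 4 (passing to all geodesics).} The non-primitive geodesics in $\mathcal N_0(L)$ are powers of primitive ones of length at most $L/2$. Hence $|\mathcal N_0(L)|=O(e^{(h_H/2+\epsilon)L})$, and since each $\widetilde{\mathcal L}_\gamma$ is a probability measure, their total contribution is negligible after multiplying by $h_HLe^{-h_HL}$. Finally, the primitive-orbit count $h_HLe^{-h_HL}|\mathcal P_0(L)|\to1$ from \cite[Corollary~4.1]{Sam14} converts the normalization $1/|\mathcal P_0(L)|$ in Step~3 into $h_HLe^{-h_HL}$. This gives exactly \eqref{eq:app-weighted-equidistribution}.
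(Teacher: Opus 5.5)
Your proposal is correct and follows essentially the paper's route. The paper transfers a Markov coding to the H\"older reparametrization $\psi^H_t$ (\cite[Lemma~2.9]{Sam14}), obtains topological weak mixing (\cite[Corollary~2.10]{Sam14}), and combines the prime orbit theorem with Bowen's periodic-orbit equidistribution as in the proof of \cite[Proposition~4.3]{Sam14}, which is the same combination as your Steps 1--4. Your explicit discarding of non-primitive geodesics is a useful addition, since the paper handles this only implicitly; one correction is that a H\"older time change of $\phi_t$ is not literally an Axiom A flow, so Bowen's theory should be invoked through the Markov coding rather than directly.
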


\begin{proof}
By \cite[Lemma~2.9]{Sam14}, a Markov coding for the auxiliary geodesic
flow induces a Markov coding for its H\"older reparametrization
$\psi^H_t$.  By \cite[Corollary~2.10]{Sam14}, the corresponding
suspension flow is topologically weakly mixing.  The prime-orbit
theorem and Bowen's periodic-orbit equidistribution theorem, in the
forms recorded as \cite[Theorems~4.2 and~4.4]{Sam14}, therefore apply.
The proof of \cite[Proposition~4.3]{Sam14}
then implies the convergence 
\[
  h_H L e^{-h_HL}
  \sum_{\substack{\mathcal O\text{ orbit with period}\
                  p(\mathcal O)\leq L}}
  \frac{1}{p(\mathcal O)}\operatorname{Leb}_{\mathcal O}
  \xrightarrow{\;\ast\;}\widetilde m_H.
\]
where $p(\mathcal O)$ denotes the period of the closed orbit $\mathcal O$. This proves
\eqref{eq:app-weighted-equidistribution} since from the correspondence in Proposition~\ref{prop:app-hilbert-flow},
$p(\mathcal O)=\ell^H_{\rho_0}(\gamma)$ and the normalized orbit measure
$p(\mathcal O)^{-1}\operatorname{Leb}_{\mathcal O}$ is exactly
$\widetilde{\mathcal L}_\gamma$ as defined in
\eqref{eq:app-orbit-measure}. 
\end{proof}

\noindent We finally state the equidistribution result used in the proof of Proposition \ref{prop:stat}:

\begin{prop}\label{prop:hil-equi}
Let $\rho_0:\pi_1S\to\pslnr$ be a Hitchin representation and let
$\psi^H_t$ be its Hilbert flow.  Then $\psi^H_t$ has a unique measure of
maximal entropy, the Bowen-Margulis measure $\widetilde m_H$, with full support, such that 
\begin{equation}\label{eq:equilimit}
  \frac{1}{\lvert\mathcal S_0(L)\rvert}
  \sum_{\gamma\in\mathcal S_0(L)}
  \widetilde{\mathcal L}_\gamma
  \xrightarrow{\;\ast\;}
  \widetilde m_H
\end{equation}
as $L \to \infty$. 
\end{prop}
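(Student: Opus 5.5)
The plan is to assemble Proposition \ref{prop:hil-equi} from the three ingredients already established in this appendix. Existence and uniqueness of the measure of maximal entropy $\widetilde m_H$ is part (iv) of Proposition \ref{prop:app-hilbert-flow}, and full support is Lemma \ref{lem:app-full-support}. It therefore remains only to convert the weighted equidistribution statement \eqref{eq:app-weighted-equidistribution} into the normalized average \eqref{eq:equilimit}.

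For this, I will write
\[
\frac{1}{\lvert\mathcal S_0(L)\rvert}\sum_{\gamma\in\mathcal S_0(L)}\widetilde{\mathcal L}_\gamma
=\frac{1}{h_H L e^{-h_HL}\lvert\mathcal S_0(L)\rvert}\cdot h_H L e^{-h_HL}\sum_{\gamma\in\mathcal S_0(L)}\widetilde{\mathcal L}_\gamma .
\]
By the orbit count \eqref{eq:app-prime-orbit}, the scalar prefactor tends to $1$. By Lemma \ref{lem:app-weighted-equidistribution}, the measures in the second factor converge weak-$*$ to $\widetilde m_H$. Pairing both sides against an arbitrary continuous $F:T^1S\to\mathbb R$ turns the left side into a product of a sequence of reals converging to $1$ and a sequence converging to $\int F\,d\widetilde m_H$. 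Hence the left side converges to $\int F\,d\widetilde m_H$, which is \eqref{eq:equilimit}.

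The main point needing care is consistency of the indexing sets. Lemma \ref{lem:app-weighted-equidistribution} sums over closed orbits, whereas $\mathcal S_0(L)$ consists of closed geodesics, possibly non-primitive, counted via conjugacy classes. I would check that the correspondence of Proposition \ref{prop:app-hilbert-flow}(iii) matches the two sums term by term, including multiplicities for non-primitive classes. Even if the orbit sum in \cite[Proposition~4.3]{Sam14} is over primitive orbits only, the non-primitive contribution is negligible. Each $\widetilde{\mathcal L}_\gamma$ is a probability measure, so the non-primitive terms have total mass at most their count. That count grows at exponential rate at most $h_H/2$, exactly as in the remark following \eqref{eq:app-prime-orbit}, so after multiplication by $h_HLe^{-h_HL}$ it tends to $0$. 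Consequently adding or removing these terms changes neither limit, and the conclusion is unaffected.
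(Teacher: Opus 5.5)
Your proposal is correct and follows the paper's proof essentially verbatim. Existence, uniqueness and full support are quoted from Proposition \ref{prop:app-hilbert-flow}(iv) and Lemma \ref{lem:app-full-support}, and \eqref{eq:equilimit} is obtained by writing the average as the ratio of the weighted sum in Lemma \ref{lem:app-weighted-equidistribution} to the normalized count \eqref{eq:app-prime-orbit}; your added check that non-primitive classes contribute total mass of exponential rate at most $h_H/2$, hence negligible after multiplying by $h_HLe^{-h_HL}$, is also correct, and it spells out the point the paper addresses in its remark following \eqref{eq:app-prime-orbit}.
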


\begin{proof}
The existence and uniqueness of $\widetilde m_H$ are contained in
Proposition~\ref{prop:app-hilbert-flow}, and its full support is
Lemma~\ref{lem:app-full-support}.  For \eqref{eq:equilimit}, express the left-hand side of \eqref{eq:equilimit} as
\[
  \frac{
    h_H L e^{-h_HL}
    \displaystyle\sum_{\gamma\in\mathcal S_0(L)}
    \widetilde{\mathcal L}_\gamma
  }{
    h_H L e^{-h_HL}\lvert\mathcal S_0(L)\rvert
  }
\]
and note that the numerator converges weakly to $\widetilde m_H$ by
Lemma~\ref{lem:app-weighted-equidistribution}, while the denominator
converges to $1$ by \eqref{eq:app-prime-orbit}. 
\end{proof}

\noindent\textit{Remark.}
An analogous statement for the symmetric-space translation length
would require equidistribution of closed geodesics ordered by the translation length $\ell_{\rho_0}(\gamma)=\|\lambda(\rho_0(\gamma))\|$
where $\lambda$ is the Jordan projection, and $\lVert \cdot \rVert$ is the Euclidean norm (\textit{cf.} Definition \ref{defn:lengths}).  Note that the preceding proof with  Sambarino's cocycle construction applies only when the desired period is a positive \emph{linear}
functional of the Jordan projection. One possible approach to proving the statement could be to apply the recent work of Chow-Fromm where they prove an equidistribution theorem for closed curves with ``norm-like" ordering (see \cite[Theorem 7.3]{ChowFromm}).

 \bibliographystyle{alpha}
 \bibliography{references}
 
\end{document}